\setlist[itemize]{noitemsep, topsep=0pt}
\setlist[enumerate]{noitemsep, topsep=0pt}
\newcommand{\seqnum}[1]{\href{http://oeis.org/#1}{\underline{#1}}}
\definecolor{navy}{rgb}{0.0,0.0,0.6}
\newcommand{\suchthat}{\colon} 
\newcommand{\N}{\mathbb{N}}
\newcommand{\Z}{\mathbb{Z}}
\renewcommand{\S}{\mathfrak{S}}
\newcommand{\Sym}{\mathfrak{S}}
\newcommand{\Cat}{\mathrm{Cat}}
\DeclareMathOperator{\des}{des}
\DeclareMathOperator{\Des}{Des}
\DeclareMathOperator{\asc}{asc}
\DeclareMathOperator{\Asc}{Asc}
\DeclareMathOperator{\id}{id}
\DeclareMathOperator{\inv}{inv}
\DeclareMathOperator{\Inv}{Inv}
\DeclareMathOperator{\cont}{cont} 
\DeclareMathOperator{\maj}{maj}
\DeclareMathOperator{\code}{code}
\DeclareMathOperator{\Out}{\mathcal{C}}
\DeclareMathOperator{\IPF}{IPF} 
\DeclareMathOperator{\PF}{PF}
\DeclareMathOperator{\UPF}{UPF} 
\DeclareMathOperator{\upf}{upf} 
\DeclareMathOperator{\UFR}{UFR}
\DeclareMathOperator{\Fub}{Fub}
\DeclareMathOperator{\Fib}{Fib}
\DeclareMathOperator{\type}{type}
\DeclareMathOperator{\spot}{spot}
\DeclareMathOperator{\car}{car}
\DeclareMathOperator{\ones}{ones}
\newcommand{\FF}{\mathcal{T}}
\newcommand{\GG}{\mathcal{G}}
\newcommand{\PPP}{\mathcal{P}}
\newcommand{\RRR}{\mathcal{R}}
\newcommand{\SSS}{\mathcal{S}}
\newcommand{\x}{\times}
\DeclareMathOperator{\disp}{disp} 
\DeclareMathOperator{\maxdisp}{maxdisp} 
\newcommand{\defterm}{\textbf} 
\newcommand{\qbinom}[2]{\genfrac{[}{]}{0pt}{}{#1}{#2}}
\newcommand{\sm}{\setminus}
\newcommand{\lp}{\left(}
\newcommand{\rp}{\right)}
\newtheorem{theorem}{Theorem}[section]
\newtheorem{corollary}[theorem]{Corollary}
\newtheorem{proposition}[theorem]{Proposition}
\newtheorem{problem}[theorem]{Problem}
\newtheorem{lemma}[theorem]{Lemma}
\newtheorem{conjecture}[theorem]{Conjecture}
\theoremstyle{definition}
\newtheorem{definition}[theorem]{Definition}
\newtheorem{remark}[theorem]{Remark}
 \newtheorem{example}[theorem]{Example}
\begin{document}
\begin{abstract}
The \textit{displacement} of a car with respect to a parking function is the number of spots it must drive past its preferred spot in order to park.  
An \emph{$\ell$-interval parking function} is one in which each car has displacement at most $\ell$.  
Among our results, we enumerate $\ell$-interval parking functions with respect to statistics such as inversion, displacement, and major index. 
We show that $1$-interval parking functions with fixed displacement exhibit a cyclic sieving phenomenon.
We give closed formulas for the number of $1$-interval parking functions with a fixed number of inversions. 
We prove that a well-known bijection of Foata preserves the set of $\ell$-interval parking functions exactly when $\ell\leq 2$ or $\ell\geq n-2$, which implies that the inversion and major index statistics are equidistributed in these cases.
\end{abstract}

\subjclass{Primary 05A05; 
Secondary 05A15, 
05A19} 
\keywords{Parking function, $\ell$-interval parking function, unit interval parking function, unit Fubini ranking, Lehmer code, ordered set partition, permutation, Foata bijection, cyclic sieving, $q$-analogue, inversion, major index, displacement}

\title{Statistics on $\ell$-interval parking functions}

\author[Celano]{Kyle Celano}
\address[K.~Celano]{Department of Mathematics, Wake Forest University, NC}
\email{\url{celanok@wfu.edu}}

\author[Elder]{Jennifer Elder}
\address[J.~Elder]{Department of Computer Science, Mathematics and Physics, Missouri Western State University, St. Joseph, MO 64507}
\email{\textcolor{blue}{\href{mailto:jelder8@missouriwestern.edu}{jelder8@missouriwestern.edu}}} 

\author[Hadaway]{Kimberly P. Hadaway}
\address[K.~P.~Hadaway]{Department of Mathematics, Iowa State University, Ames, IA 50010}
\email{\textcolor{blue}{\href{mailto:kph3@iastate.edu}{kph3@iastate.edu}}}

\author[Harris]{Pamela E. Harris}
\address[P.~E.~Harris]{Department of Mathematical Sciences, University of Wisconsin-Milwaukee, Milwaukee, WI 53211}
\email{\textcolor{blue}{\href{mailto:peharris@uwm.edu}{peharris@uwm.edu}}}
 
\author[Martin]{Jeremy L. Martin}
\address[J.~L.~Martin]{Department of Mathematics, University of Kansas, Lawrence, KS 66045}
\email{\textcolor{blue}{\href{mailto:jlmartin@ku.edu}{jlmartin@ku.edu}}} 

\author[Priestley]{Amanda Priestley}
\address[A.~Priestley]{Department of Computer Science, The University of Texas at Austin, Austin, TX 78712}
\email{\textcolor{blue}{\href{mailto:amandapriestley@utexas.edu}{amandapriestley@utexas.edu}}} 

\author[Udell]{Gabe Udell}
\address[G.~Udell]{Department of Mathematics, Cornell University, Ithaca, NY 14850}
\email{\textcolor{blue}{\href{mailto:gru5@cornell.edu}{gru5@cornell.edu}}}

\date{\today} 

\maketitle

\tableofcontents

\section{Introduction} 
This paper is about enumerating $\ell$-interval parking functions with respect to statistics such as inversion, displacement, and major index. 
Parking functions first appeared in the work of Konheim and Weiss \cite{Konheim1966} and have become standard objects in combinatorics.  
Suppose that $n$ cars attempt to park in $n$ spots $1,2,\dots,n$ on a one-way street.  
Each car drives to its preferred spot $a_i$ and parks there if possible; if that spot is taken, the car continues along the street and parks in the first available spot.  If all cars are able to park successfully, the tuple $\alpha = (a_1,a_2,\dots,a_n)\in [n]^n$ is called a \defterm{parking function}.  
In this case, if the $i$th car parks in spot $s_i$, then the number $s_i-a_i$ is its \defterm{displacement}.

An \defterm{$\ell$-interval parking function}, or IPF for short, is a parking function in which each car has displacement less than or equal to some fixed number $\ell$; we write $\IPF_n(\ell)$ for the set of all $\ell$-interval parking functions of length~$n$.  
These objects were first studied by 
Aguilar-Fraga et al.~\cite{aguilarfraga2024interval}.  (The idea of restricting each car to an interval was considered by Colaric et al.~\cite{bib:Colaric2020IntervalPF}, although in that paper the intervals were not required to have the same length.)  The case $\ell=1$ is of particular interest.  These \defterm{unit interval parking functions}, or UPFs, are enumerated by the Fubini numbers \cite{bradt2024unit, bib:HadawayUndergradThesis}, which also count the faces of the permutahedron \cite{unit_perm}.  UPFs are also related  to Boolean intervals in weak Bruhat order on the symmetric group \cite{elder2024parking}. 
Enumerative results on $\ell$-interval parking functions with $\ell\geq 1$ also give connections to Dyck paths with restricted heights and to preferential arrangements \cite{aguilarfraga2024interval}.

Bradt et al.~\cite[Theorem~2.9]{bradt2024unit} proved that unit interval parking functions exhibit a tightly controlled \emph{block structure}, which we describe in Section~\ref{sec:block structure}.  
To summarize, if $\alpha=(a_1,\dots,a_n)$ is a UPF with weakly increasing rearrangement $\alpha^\uparrow=(a'_1,\dots,a'_n)$, then $a'_i\in\{i,i-1\}$ for every $i$, and if $\alpha^\uparrow$ is partitioned into blocks by inserting a separator before every entry with $\alpha'_i=i$,
then the elements of each block appear left-to-right in $\alpha$.  This structure is key to many of the 
enumerative results in this paper. 
No analogous structure is known for $\ell$-interval parking functions with $\ell\geq2$.

In \Cref{sec:counting_through_perms}, we study the generating function $\Phi_{n,\ell}(q,t)$ for $\IPF_n(\ell)$ by total displacement and inversion number.  We use the technique of ``counting through permutations'': to count a set $X$, define an appropriate function $\Out\colon X\to\Sym_n$, count each fiber, and add up the counts.
The formula produced by this method cannot always be simplified further, but it is typically much more computationally efficient than brute-force enumeration.  We give a general formula for $\Phi_{n,\ell}(q,t)$ in \cref{thm:disp-enumerator-for-ell-interval}, involving the $q$-analogues of certain numbers $L_\ell(\sigma;i)$ associated with a permutation $\sigma$ (see eqn.~\eqref{define-Ll}).  The case $\ell=n-1$ was previously obtained in \cite{bib:ColmenarejoHarris}.
For UPFs, the polynomial $\Phi_{n,1}(q,t)$ has a much simpler expression (\cref{thm:disp-inv-enumerator-for-upfs}).
In addition, if we write $\Phi_{n,1}(q,t)=\sum_k q^k f_{n,k}(t)$, then each polynomial $f_{n,k}(t)$ exhibits a cyclic sieving phenomenon (\cref{thm:cyclic-sieving}).
There is an analogue of \Cref{thm:disp-inv-enumerator-for-upfs} for $\ell=2$ (\Cref{thm:disp-enumerator-for-2-interval}), obtained by reducing the enumeration of $2$-interval parking functions to that of unit interval parking functions.  
Potentially, this technique could be extended to give (more complicated)
formulas for $\Phi_{n,\ell}(q,t)$ for $n\geq3$.

In \Cref{sec:upf-inversions}, we present explicit closed formulas for the number of $\upf_{n,k}^{\inv}$ of UPFs of length $n$ with exactly $k$ inversions.  The main tool is a bijection of Avalos and Bly~\cite{avalos_sequences_2020}, which we modify to obtain a bijective labeling of these UPFs by objects we call \defterm{ciphers} (\cref{thm:cipher-bijection}), which can be counted by elementary ``stars and bars'' methods.  We work out the formulas for $k\leq3$, which suggest that $\upf_{n,k}^{\inv}=O(2^{n-2k} n^k)$ in general (\cref{conj:count-upfnk}).  As another application of ciphers, we recover the main result of \cite{elder2024parking}, a bijection between
unit Fubini rankings (UPFs with all blocks of sizes 1 or 2) and Boolean intervals in weak order (\cref{count-unit-Fubini}).

In \Cref{sec:equidistribution}, we prove that the inversion and major index statistics are equidistributed on $\ell$-interval parking functions when $\ell\leq2$ or $\ell\geq n-2$.  
Recall that the \defterm{major index} of $(w_1,w_2,\dots,w_n)\in\Z^n$ is $\maj(w)=\sum_{i\in \Des(w)}i$, where $\Des(w)=\{i\in [n-1]\suchthat w_i>w_{i+1}\}$. 
Foata \cite{foata} famously constructed a content-preserving bijection~$F$ on the set of words over a fixed alphabet
with the property that $\inv(F(w))=\maj(w)$, thus proving that the two statistics are equidistributed on any set of words invariant under permutation (as proved earlier by MacMahon algebraically \cite{MacMahon}).
Parking functions are invariant under permutation, but in general, the set of $\ell$-interval parking functions is not.  Nevertheless, we prove (\Cref{thm:main-Foata}) that it preserves the set $\IPF_n(\ell)$ precisely when $\ell\in\{0,1,2,n-2,n-1\}$. 
The most difficult cases are $\ell=1$ and (especially) $\ell=2$.  Along the way, we prove several results (\cref{lemma:rearrange}, \cref{prop:sorting}, \cref{lemma:reserved-spot}) to describe how small changes in a parking function (such as transposing two consecutive entries) affect where the cars park; these facts may be of independent interest and utility.
In \Cref{sec:Foata-maj}, we apply \Cref{thm:main-Foata} to the results of \Cref{sec:counting_through_perms} to enumerate $\IPF_n(\ell)$ by major~index.

We conclude in \Cref{sec:future-work} with some open problems for future study.

\section{Background} \label{sec: definitions}

In this section, we provide background and definitions related to parking functions and $\ell$-interval parking functions.
A standard source for the basics of parking functions is \cite{YanPFs}.
For interval and $\ell$-interval parking functions, see, e.g., \cite{aguilarfraga2024interval,bradt2024unit,bib:Colaric2020IntervalPF}.
\begin{remark}
    Throughout we interchangeably regard tuples $(a_1,a_2,\dots,a_n)$ and words $a_1a_2\cdots a_n$ as identical objects, depending on context. 
\end{remark}

Throughout, the symbol $\N$  denotes the set of positive integers.  
For $m,n\in\N$, we set $[n]=\{1,2,\dots,n\}$ and $[m,n]=\{m,m+1,\dots,n\}$.  
For a tuple $\alpha=(a_1,a_2,\dots,a_n)\in\N^n$, we write $\alpha^\uparrow$ for the tuple obtained by sorting the elements of $\alpha$ in weakly increasing order. The symbol $\mathfrak{S}_n$ denotes the symmetric group of permutations of~$[n]$.  Throughout, we write permutations in one-line notation.

\subsection{Parking functions and \texorpdfstring{$\ell$-interval}{l-interval} parking functions}\label{sec:background on PFs}
Parking functions are well-studied objects that were defined in the introduction; one survey of their properties is \cite{YanPFs}.  

We write $\PF_n$ for the set of parking functions of length $n$. It is known that $|\PF_n|=(n+1)^{n-1}$~\cite{Konheim1966}.  
Moreover,
a tuple $\alpha=(a_1,a_2,\dots,a_n)\in[n]^n$ is a parking function if and only if its weakly increasing rearrangement  $\alpha^\uparrow=(a'_1,a'_2,\dots,a'_n)$ satisfies $a'_i\leq i$ for all $i$, or equivalently if there is some permutation $\sigma\in\Sym_n$ such that $a_i\leq\sigma_i$ for all $i\in[n]$ \cite[p.~4]{YanPFs}.  We refer to this property as the \defterm{rearrangement criterion}. 
It follows that if $\alpha\in\PF_n$ and $\beta=(b_1,b_2,\dots,b_n)$ satisfy $1\leq b_i\leq a_i$ for all $i\in[n]$, then $\beta\in\PF_n$.

We write $\PF_n^\uparrow$ for the set of weakly increasing parking functions of length $n$.  It is known that $|\PF_n^\uparrow|=\Cat_n=\frac{1}{n+1}\binom{2n}{n}$, the $n$th Catalan number~\cite[Exercise~6.19(s)]{stanley1999enumerative}.

\begin{definition}\label{def:ell_interval}
Let $\alpha = (a_1,a_2, \ldots , a_n)\in \PF_n$.
The \defterm{car permutation} of $\alpha$ is $\car_\alpha=(\car_\alpha(1),\dots,\car_\alpha(n))\in\Sym_n$, where $\car_\alpha(i)$ denotes the car that parks in the $i$th spot.
The \defterm{spot permutation} of $\alpha$ is $\spot_\alpha=(\spot_\alpha(1),\dots,\spot_\alpha(n))\in\Sym_n$, where $\spot_\alpha(i)$ denotes the number of spot that the $i$th car parks in.
Note $\car_\alpha$ and $\spot_\alpha$ are permutations written as tuples and  $\car_\alpha=\spot_\alpha^{-1}$.
\end{definition}

\begin{example}\label{ex to start}
If $\alpha=(1,4,4,3,2,2)\in\PF_n$, then $\car_\alpha=(1,5,4,2,3,6)$ and $\spot_\alpha=(1,4,5,3,2,6)$.
\end{example}

\begin{remark}
Much of the parking function literature uses the term ``outcome'' for what we have called the car permutation of a parking function $\alpha$.  We have adopted the terms ``car permutation'' and ``spot permutation'' because we work with both of them in different parts of this paper, and wish to avoid any possible ambiguity in the use of the word ``outcome''.  In the Sage ParkingFunctions library, the car permutation is computed as 
\texttt{alpha.cars\_permutation()} and the spot permutation is \texttt{alpha.parking\_permutation()}.
\end{remark}

\begin{definition}
Let $\alpha=(a_1,a_2,\ldots,a_n)\in\PF_n$.
The \defterm{displacement} of the $i$th car is $\disp_\alpha(i)=\spot_\alpha(i)-a_i$.  That is, $\disp_{\alpha}(i)$ is the number of additional spaces car $i$ has to drive past its preferred spot in order to park.
The \defterm{(total) displacement}\footnote{Total displacement is also called \defterm{area}, since it is exactly the area under the labeled Dyck path representing $\alpha$; see, e.g., \cite[pp.\ 54--55]{YanPFs}.} of $\alpha$ is $\disp(\alpha)=\sum_{i=1}^n \disp_{\alpha}(i)$.
The \defterm{maximum displacement} of $\alpha$ is $\maxdisp(\alpha)=\max(\disp_\alpha(1),\disp_\alpha(2),\dots,\disp_\alpha(n))$.
\end{definition}

\begin{example}[Continuing \Cref{ex to start}]
If $\alpha=(1,4,4,3,2,2)\in\PF_n$, then 
the displacements of cars $1,2,\ldots,6$ are 
$0,0,1,0,0,4$, respectively.
\end{example}

\begin{definition}
For a positive integer $\ell\leq n$, we say that $\alpha$ is an \defterm{$\ell$-interval parking function} if $\maxdisp(\alpha)\leq\ell$.
The set of $\ell$-interval parking functions of length $n$ is denoted $\IPF_n(\ell)$.
In the case $\ell=1$, we call $\alpha$ a \defterm{unit interval parking function} and write $\UPF_n=\IPF_n(1)$.
\end{definition}

\begin{example} \label{ex:basic counts}
We can give exact formulas for $|\IPF_n(\ell)|$ in some extreme cases.
\medskip

\begin{itemize}[leftmargin=.7in]

\item[\underline{$\ell=n-1$:}] Every $\alpha\in\PF_n$ satisfies $\maxdisp(\alpha)\leq n-1$, so
\[|\IPF_n(n-1)|=|\PF_n|=(n+1)^{n-1}.\]

\item[\underline{$\ell=n-2$:}] Let $\alpha=(a_1,a_2,\ldots,a_n)\in\PF_n$.  
Then $\maxdisp(\alpha)=n-1$ if and only if the car that parks in spot~$n$ preferred spot~1.  
This can happen only if (i) that car was the last to park and (ii) the first $n-1$ cars parked in the first $n-1$ spots.  Equivalently, $a_n=1$ and $(a_1,a_2,\dots,a_{n-1})\in\PF_{n-1}$.  Therefore,
\begin{align*}
|\IPF_n(n-2)|&=|\PF_n|-|\{\alpha\in\PF_n\suchthat \maxdisp(\alpha)=n-1\}|\\
&=(n+1)^{n-1}-n^{n-2}.
\end{align*}

\item[\underline{$\ell=1$:}] The parking functions $\alpha$ with $\maxdisp(\alpha)=1$ are precisely the unit interval parking functions.  Therefore,
\[|\IPF_n(1)|=|\UPF_n|=\Fub_n,\]
where $\Fub_n$ is the number of ordered set compositions of~$[n]$ (see \cref{rmk:fubini} below).

\item[\underline{$\ell=0$:}] The parking functions $\alpha$ with $\maxdisp(\alpha)=0$ are precisely the permutations.  Therefore,
\[|\IPF_n(0)|=n!.\]

\end{itemize}
\end{example}

It is immediate from the definition that any rearrangement of a parking function is a parking function.
The same is not true for $\ell$-parking functions because maximum displacement may change under rearrangement. 
For instance, $\maxdisp(112)=1$ but $\maxdisp(121)=2$.
On the other hand, a weaker result holds for all~$\ell$, as we prove later (\cref{prop:sorting}).

\subsection{Block structure of unit interval parking functions}\label{sec:block structure}

In \cite{bradt2024unit}, it was proved that unit interval parking functions are characterized by a tightly controlled \defterm{block structure}, as we now explain.

\begin{definition}
A tuple $\beta=(b_1,b_2,\dots,b_n)\in[n]^n$ is called a \defterm{block word} if $b_i\in\{i,i-1\}$ for each $i$ (so in particular, $b_1=1$).
When we write a block word, we place a separator before each $b_i$ such that $b_i=i>1$.
The maximal subwords between separators are called \defterm{blocks}. 
Note that a block word is determined by the ordered list of lengths of its blocks.
\end{definition}

For example, the block words of length 3 are $112$, $11\mid3$, $1\mid22$, $1\mid2\mid3$.

We now reformulate the characterization of unit interval parking functions based on their block structure.

\begin{theorem}[{\cite[Theorem~2.9]{bradt2024unit}}]
\label{thm:upf_rearrangement}
Let $\alpha = (a_1, a_2,\dots, a_n) \in \UPF_{n}$ and $\alpha^\uparrow = (a'_1, a'_2,\dots, a'_n)$.
Then,
\begin{enumerate}
\item $\alpha^{\uparrow}$ is a block word.  If $\alpha^\uparrow=\pi_1\mid\pi_2\mid\cdots\mid\pi_m$, we refer to the collection of $\pi_j$'s as the \defterm{block structure} of $\alpha$.
\item The entries in each block appear in increasing order in $\alpha$. That is, $\alpha$ is a shuffle of its blocks.
\item Given a block word $\beta$, every shuffle of its blocks is a unit interval parking function.  Therefore,
\[|\{\alpha\in\UPF_n \suchthat \alpha^\uparrow=\beta\}| = \binom{n}{|\pi_1|,|\pi_2|,\ldots, |\pi_m|}.\]
\end{enumerate}
\end{theorem}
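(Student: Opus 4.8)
The plan is to prove the three assertions in order, first pinning down the shape of $\alpha^\uparrow$, then showing that the parking process splits into independent blocks, and finally proving that each block is rigid. Throughout, $\alpha$ is a $\UPF_n$, so in particular it is a parking function with $\maxdisp(\alpha)\le 1$.

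For assertion (1), the rearrangement criterion already gives $a'_i\le i$, so it suffices to prove $a'_i\ge i-1$. I would argue by contradiction: suppose $a'_i\le i-2$ for some $i$ and set $m=a'_i$, so that $i\ge m+2$. Since $\alpha^\uparrow$ is weakly increasing, all of $a'_1,\dots,a'_i$ are at most $m$, so at least $i\ge m+2$ cars prefer a spot in $[1,m]$. These cars occupy at least $m+2$ distinct spots, whose maximum is therefore at least $m+2$; but the car parking in that maximal spot prefers a spot $\le m$, so its displacement is at least $(m+2)-m=2$, contradicting $\maxdisp(\alpha)\le 1$. Hence $a'_i\in\{i-1,i\}$ for every $i$, and $\alpha^\uparrow$ is a block word.

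Next I would isolate a decomposition lemma that holds for \emph{every} parking function, not just UPFs. The key observation is that a separator sits before spot $t$ precisely when $a'_t=t$, which happens if and only if exactly $t-1$ cars prefer a spot in $[1,t-1]$. At any such $t$ the process resets: cars preferring a spot $\ge t$ never visit spots below $t$, so spots $[1,t-1]$ can only be filled by the $t-1$ cars preferring a spot $\le t-1$; since all spots must be filled, those $t-1$ cars occupy exactly $[1,t-1]$ and the remaining cars occupy exactly $[t,n]$. Iterating over all separators shows that, writing $\pi_1\mid\cdots\mid\pi_m$ for the block structure with $\pi_j$ occupying a spot-interval $I_j$, the cars whose preferences lie in $I_j$ park exactly in the spots of $I_j$, independently of the other blocks and with the same displacements they would have in isolation.

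Finally I would prove a within-block rigidity statement. After relabeling, a block of length $L$ is a self-contained parking problem with preference multiset $\{1,1,2,\dots,L-1\}$ on spots $[1,L]$, whose sorted order has $\maxdisp=1$; I claim this sorted order is the \emph{only} ordering with $\maxdisp\le 1$, which I would prove by induction on $L$. In any such ordering, the unique car $c^*$ preferring $L-1$ must fill spot $L$, since no other car can reach it; consequently $c^*$ permanently occupies only spot $L$, so deleting it leaves the other cars parking in exactly the same spots, giving a length-$(L-1)$ block with $\maxdisp\le 1$ that is sorted by induction. Because $c^*$ requires spot $L-1$ to be occupied upon arrival, and that occurs only once the last of the other cars parks, $c^*$ must come last, so the whole word is sorted. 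Assertion (2) then follows by combining block-independence with this rigidity. For assertion (3), conversely, any shuffle of the sorted blocks is a parking function by the decomposition lemma (each block is a parking function in isolation) and has $\maxdisp\le 1$ block-by-block, hence is a UPF; since distinct blocks use disjoint sets of preference values, distinct shuffles yield distinct words, and their number is $\binom{n}{|\pi_1|,\dots,|\pi_m|}$. The hard part is the rigidity step, where the delicate point is to apply the inductive hypothesis without circular reasoning about when spot $L-1$ fills; I resolve this by first deleting $c^*$ (which provably does not alter where the other cars park) to invoke induction, and only afterward using the timing of spot $L-1$ to force $c^*$ to be last.
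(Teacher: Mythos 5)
This theorem is not proved in the paper at all: it is quoted verbatim from Bradt et al.~\cite{bradt2024unit}, so there is no internal argument to compare yours against, and your proposal must be judged as a self-contained proof — which it essentially is. Your three ingredients are all sound: the displacement/pigeonhole argument for $a'_i\geq i-1$ (if $a'_i\leq i-2$, the $i$ cars preferring spots $\leq a'_i$ occupy $i$ distinct spots, so one of them is displaced by at least $2$); the decomposition lemma at indices $t$ with $a'_t=t$, which correctly shows the parking process splits into independent subproblems on spot intervals; and the rigidity induction showing that the multiset $\{1,1,2,\dots,L-1\}$ admits only its sorted arrangement with $\maxdisp\leq 1$. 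In particular, your deletion argument (removing the car $c^*$ that parks in spot $L$, verifying the other cars' spots are unchanged, and only then using the timing of spot $L-1$ to force $c^*$ to be last) does resolve the circularity you flag. Two small patches are needed. First, the rigidity induction's step presumes a \emph{unique} car preferring $L-1$, which fails at $L=2$ (the multiset is $\{1,1\}$, and both cars prefer $1$); you must take $L\leq 2$ as the trivial base case and run the inductive step only for $L\geq 3$. Second, in assertion (3) you invoke the decomposition lemma to conclude that a shuffle is a parking function, but that lemma presupposes a parking function; the clean order is to note first that the shuffle's weakly increasing rearrangement is $\beta$, which satisfies $b_i\leq i$, so the shuffle is a parking function by the rearrangement criterion, and only then apply the decomposition lemma to bound the displacements block by block. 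Both are bookkeeping fixes, not gaps.
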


Next we state a bijection between the set of unit interval parking functions of length $n$ with~$m$ blocks and the set of surjective functions from $[n]$ to $[m]$.

\begin{corollary}\label{cor:block-bijection}
Let $\alpha=(a_1,a_2,\dots,a_n)\in\UPF_n$ have block structure $\pi_1\mid\pi_2\mid\cdots\mid\pi_m$.  For each $i\in[n]$, let $s(i)$ be the index of the block to which $a_i$ belongs.  Thus, $s$ is a surjective function $[n]\to[m]$ (equivalently, an ordered partition of $[n]$ into $m$ nonempty blocks).  Then the map sending $\alpha\mapsto s$ is a bijection
\[\{\alpha\in\UPF_n\suchthat \alpha \text{ has $m$ blocks}\} \to \{\text{surjective functions } [n]\to[m]\}.\]
\end{corollary}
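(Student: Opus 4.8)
The plan is to confirm that $\alpha\mapsto s$ maps into the set of surjections and then to exhibit an explicit two-sided inverse. First I would apply \Cref{thm:upf_rearrangement}: since $\alpha\in\UPF_n$, its weakly increasing rearrangement $\alpha^\uparrow$ is a block word with blocks $\pi_1\mid\cdots\mid\pi_m$, and $\alpha$ is a shuffle of these blocks. Before $s$ is even well defined I must check that each entry $a_i$ lies in an \emph{unambiguous} block; this is the one genuine subtlety, since a block word repeats values (the opening value of every block occurs twice), so a priori a shuffle decomposition need not be unique. I would settle this by noting that \emph{distinct blocks occupy disjoint, increasing ranges of values}: if block $\pi_j$ ends at position $q$ of $\alpha^\uparrow$, then all entries of $\pi_1,\dots,\pi_j$ are at most $q$, while $\pi_{j+1}$ opens at position $q+1$ with the value $q+1$. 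Hence the value of $a_i$ alone determines its block, so $s(i)$ is well defined; as each block is nonempty, $s$ is surjective.

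Next I would construct the inverse $\Psi$. Given a surjection $s\colon[n]\to[m]$, set $c_j=|s^{-1}(j)|\ge 1$ and let $\beta$ be the unique block word with block-length sequence $(c_1,\dots,c_m)$ (a block word is determined by its list of block lengths). For each $j$, write $s^{-1}(j)=\{i_1<\cdots<i_{c_j}\}$ and assign to these positions the $c_j$ entries of the $j$th block of $\beta$, listed in increasing order; call the resulting tuple $\Psi(s)$. Since the multiset of entries of $\Psi(s)$ is that of $\beta$, we have $\Psi(s)^\uparrow=\beta$, and $\Psi(s)$ is by construction a shuffle of the blocks of $\beta$; thus \Cref{thm:upf_rearrangement}(3) gives $\Psi(s)\in\UPF_n$, and its block structure is that of $\beta$, which has $m$ blocks.

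Finally I would verify that $\Psi$ and $\alpha\mapsto s$ are mutually inverse. Starting from $s$, applying the forward map to $\Psi(s)$ returns $s$, because the value placed at position $i$ was drawn from block $s(i)$ and, by the disjoint-range observation, still identifies that block. Conversely, starting from $\alpha$ with block sizes $|\pi_j|=c_j$, the block word rebuilt from $(c_1,\dots,c_m)$ is exactly $\alpha^\uparrow$, and the prescription ``list each block's entries increasingly along its positions'' is precisely the shuffle condition \Cref{thm:upf_rearrangement}(2); hence the reconstruction returns $\alpha$. This yields the asserted bijection. The step demanding the most care is the well-definedness of $s$, i.e.\ the uniqueness of the block decomposition of a UPF, which is exactly where the disjoint-value-range observation is used; the remainder is bookkeeping. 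As a consistency check, summing \Cref{thm:upf_rearrangement}(3) over all block words with $m$ blocks gives $\sum_{(c_1,\dots,c_m)\compn n}\binom{n}{c_1,\dots,c_m}$, which is exactly the number of surjections $[n]\to[m]$.
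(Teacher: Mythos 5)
Your proof is correct and is essentially the argument the paper leaves implicit: \Cref{cor:block-bijection} is stated there without proof as an immediate consequence of \Cref{thm:upf_rearrangement}, and your three steps (well-definedness of $s$ via the disjoint value ranges of distinct blocks, the inverse construction from the composition $(c_1,\dots,c_m)$, and the mutual-inverse check using parts (2) and (3)) supply exactly the details being suppressed. In particular, your observation that no value can occur in two different blocks of a block word---which settles the only genuine subtlety, namely the uniqueness of the assignment of entries of $\alpha$ to blocks---is the right justification, and the rest is the bookkeeping you describe.
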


\begin{remark}\label{rmk:fubini}
A surjective function $[n]\to[m]$ can be regarded as an (ordered) set composition of $[n]$ with $m$ nonempty blocks.  In particular, the number of unit interval parking functions of length $n$ equals the \defterm{Fubini number} $\Fub_n$ of ordered set compositions of $[n]$, as observed by Hadaway \cite{bib:HadawayUndergradThesis}.  The Fubini numbers are sequence \#\seqnum{A000670} in \cite{OEIS}. We obtain a new proof this fact following \Cref{thm:disp-inv-enumerator-for-upfs}.
\end{remark}

\begin{example}\label{ex: block structure}
Let $\alpha=(8,1,5,5,1,2,4,7)\in\UPF_{8}$.  Then the block structure (as in (2) of \cref{thm:upf_rearrangement}) is $\alpha^\uparrow=112\mid4\mid55\mid7\mid8$, and the corresponding element of $[5]^8$, as in \cref{cor:block-bijection}, is $s=51331124$.
\end{example}

Block structure appears to be unique to the case $\ell=1$; no result analogous to \cref{thm:upf_rearrangement} is known for $\ell$-interval parking functions when $\ell>1$. We state this as an open problem in \Cref{sec:future-work}.

\subsection{Permutation statistics}\label{sec:background on perm stats}
Let $w=w_1\cdots w_n\in \N^n$ be a sequence of positive integers.  An \defterm{inversion} of $w$ is a pair of indices $(i,j)$ such that $1\leq i<j\leq n$ and $w_i>w_j$. 
We let $\Inv(w)$ be the set of inversions of $w$ and set $\inv(w)=|\Inv(w)|$.  
A \defterm{descent} is a position $i\in [n-1]$ such that $w_i>w_{i+1}$.  
We let $\Des(w)$ be the set of descents of $w$ and set $\des(w)=|\Des(w)|$.  
The \defterm{major index} of $w$ is
\[\maj(w)=\sum_{i\in \Des(w)}i.\]
By a bijection of Foata \cite{MacMahon,foata1974mappings} that we describe in detail in \Cref{sec:equidistribution}, inversions and major index are \defterm{equidistributed} as statistics on the set of permutations, meaning that there are as many permutations $w\in \S_n$ with $\inv(w)=j$ as there are with $\maj(w)=j$, for each nonnegative integer $j$. 
More generally, inversions and major index are equidistributed on any set of words $W\subseteq \N^n$ satisfying the property that if $w\in W$ and $u$ is a rearrangement of $w$, then $u\in W$ \cite{Foata1978}. 
Hence, inversions and major index are equidistributed on the set of parking functions. In \Cref{sec:equidistribution}, we completely characterize the pairs $(n,\ell)$ such that inversions and major index are equidistributed on $\IPF_n(\ell)$.

It is immediate from the definition that $\inv(\sigma)=\inv(\sigma^{-1})$ for every permutation $\sigma$.  The following result asserts that inversions of parking functions can be computed as inversions of permutations.

\begin{lemma}\label{lem: inv of outcome}
Let $\alpha\in \UPF_n$.  Then $\Inv(\alpha)=\Inv(\spot_\alpha)$.
\end{lemma}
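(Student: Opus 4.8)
The plan is to establish, for every pair of positions $i<j$, the equivalence $a_i>a_j \iff \spot_\alpha(i)>\spot_\alpha(j)$. Since $\Inv(\alpha)$ and $\Inv(\spot_\alpha)$ are both sets of pairs $(i,j)$ with $i<j$, this equivalence is exactly what is needed to conclude that the two sets coincide. The only input I expect to need is the defining property of a unit interval parking function, namely that every displacement is $0$ or $1$; equivalently $a_i \le \spot_\alpha(i) \le a_i+1$ for all $i$.

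The heart of the argument is a short sandwich inequality for pairs with distinct preferences. Suppose $a_i<a_j$ for two indices $i,j$ (not necessarily in order). Since these are integers, $a_i+1 \le a_j$, and combining the displacement bounds gives
\[\spot_\alpha(i) \le a_i+1 \le a_j \le \spot_\alpha(j).\]
Because $\spot_\alpha$ is a permutation, the two ends cannot be equal, so $\spot_\alpha(i)<\spot_\alpha(j)$. Thus on positions with unequal preferences, the relative order of parking spots matches the relative order of preferences, and the inversion equivalence is immediate in both directions.

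The remaining, and genuinely more delicate, case is a tie $a_i=a_j=:v$ with $i<j$. Such a pair is never an inversion of $\alpha$, so I must show it is never an inversion of $\spot_\alpha$ either, i.e. that $\spot_\alpha(i)<\spot_\alpha(j)$. Here I would appeal to the first-come-first-served parking process: car $i$ parks before car $j$. If spot $v$ were already occupied when car $i$ arrives, then car $i$ is pushed to $v+1$; but then both $v$ and $v+1$ are full when car $j$ arrives, forcing a displacement of at least $2$ and contradicting the UPF hypothesis. Hence spot $v$ is free for car $i$, giving $\spot_\alpha(i)=v$, and car $j$ then parks at $v+1$ (it cannot use $v$, and its displacement is at most $1$). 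Therefore $\spot_\alpha(i)=v<v+1=\spot_\alpha(j)$.

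Putting the pieces together, for $i<j$ we obtain $(i,j)\in\Inv(\alpha) \iff a_i>a_j \iff \spot_\alpha(i)>\spot_\alpha(j) \iff (i,j)\in\Inv(\spot_\alpha)$, which proves the claim. I expect the main obstacle to be the tie case: the distinct-preference case is a one-line inequality valid for any parking function with displacements in $\{0,1\}$, whereas the equal-preference case is precisely where the order of arrival interacts with the displacement bound, and is the step that genuinely uses the UPF hypothesis rather than treating $\alpha$ as an arbitrary tuple. Alternatively, one could deduce the tie case from the block-structure result \cref{thm:upf_rearrangement}, since the two copies of $v$ are the first two entries of a common block, whose cars occupy consecutive spots in arrival order.
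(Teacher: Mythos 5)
Your proof is correct, but it takes a genuinely different route from the paper's. The paper proves the two inclusions separately: for $\Inv(\spot_\alpha)\subseteq\Inv(\alpha)$ it argues directly from the parking process (a car that arrives earlier yet parks further right must have the strictly larger preference), and for $\Inv(\alpha)\subseteq\Inv(\spot_\alpha)$ it invokes the block structure theorem (\cref{thm:upf_rearrangement}): an inversion of $\alpha$ must straddle two blocks, and cars in earlier blocks both prefer and occupy earlier spots. You instead run a trichotomy on preferences and never touch block structure: the sandwich $\spot_\alpha(i)\le a_i+1\le a_j\le\spot_\alpha(j)$, together with injectivity of $\spot_\alpha$, settles every pair with distinct preferences in both directions at once, and a short first-come-first-served argument settles ties, showing two cars with the same preference $v$ must occupy $v$ and $v+1$ in arrival order (your contradiction---if $v$ were taken before car $i$ arrives, car $j$ would be displaced by at least $2$---is exactly right, and it is indeed the only step where arrival order interacts with the displacement bound). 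What your approach buys is self-containedness: it uses only the bound $a_i\le\spot_\alpha(i)\le a_i+1$ and does not import \cref{thm:upf_rearrangement}, which is a result quoted from prior work; it also makes transparent precisely where the UPF hypothesis is needed. What the paper's approach buys is brevity, since block structure is a standing tool used throughout the paper, and its first inclusion is a fact valid for all parking functions rather than just unit interval ones.
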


\begin{proof}
Let $\alpha=(a_1,a_2,\ldots,a_n)\in\UPF_n$ and let $(i,j)$ be an inversion of $\tau=\spot_\alpha$. 
That is, car~$i$ arrives earlier than car~$j$ and parks to its right.  It follows that $a_i>a_j$, so $(i,j)\in\Inv(\alpha)$.

Now let $(i,j)$ be an inversion of~$\alpha$. Since $a_i>a_j$ and $i<j$, it follows by~(3) of \Cref{thm:upf_rearrangement} that $a_i$ and $a_j$ belong to different blocks. Since cars in earlier blocks have earlier preferences than cars in later blocks and park earlier than cars in later blocks, this means that $\tau_{i}>\tau_j$.
Thus every inversion of~$\alpha$ is an inversion of~$\tau$.
\end{proof}

An \defterm{ascent} of a word $w\in \N^n$ is a position $i\in [n-1]$ such that $w_i<w_{i+1}$. We let $\Asc(w)$ be the set of ascents of $w$ and set $\asc(w)=|\Asc(w)|$. It is known \cite[Exercise 1.133a]{stanley2012enumerative} that
\begin{equation}\label{eq:ascent-power-of-2-are-fubini}
    \sum_{\sigma\in \S_n} 2^{\asc(\sigma)}=\Fub_n=|\UPF_n|
\end{equation}
(see \Cref{cor:block-bijection} and \Cref{rmk:fubini}).  We obtain a more refined enumerative result in \Cref{thm:disp-inv-enumerator-for-upfs}.

\section{Enumeration by counting through permutations} \label{sec:counting_through_perms}

In this section, we study the generating function
\[\Phi_{n,\ell}=\Phi_{n,\ell}(q,t) = \sum_{\alpha\in \IPF_n(\ell)}q^{\disp(\alpha)}t^{\inv(\car_\alpha)}.\]
We give formulas for $\Phi_{n,\ell}$ for various values of $\ell$ by partitioning $\IPF_n(\ell)$ into the fibers of the function $\Out_{n,\ell}\colon\IPF_n(\ell)\to\Sym_n$
defined by $\Out_{n,\ell}(\alpha)=\car_\alpha$.

For a permutation $\sigma=\sigma_1\sigma_2\cdots\sigma_n\in\mathfrak{S}_n$ and position $i\in[n]$, define
\begin{equation} \label{define-Ll}
L_\ell(i;\sigma)=\min(\ell+1,i-t+1),
\end{equation}
where $\sigma_t, \sigma_{t+1}, \ldots, \sigma_{i}$ is the longest contiguous subsequence of $\sigma$ such that  $\sigma_k \leq \sigma_i$ for all $k\in[t,i]$.  
That is, $L_\ell(i;\sigma)$ is the number of possible preferred spots for car $\sigma_i$ that results in car $\sigma_i$ parking in spot~$i$, subject to the choices of the previous cars and the constraint that the result must be an $\ell$-interval parking function.  It follows that for all $\ell\in\N$ and  $\sigma\in\mathfrak{S}_n$, we have 
\begin{equation} \label{fibersizes}
|\Out_{n,\ell}^{-1}(\sigma)|=\prod_{i=1}^nL_\ell(i;\sigma).
\end{equation}

The $\ell=n-1$ case of~\eqref{fibersizes} is \cite[Eqns.~2.1, 2.2]{Konheim1966}; see also \cite[Proposition 3.1]{bib:ColmenarejoHarris} and \cite[Exercise 5.49d]{stanley1999enumerative}.  Our $L_{n-1}(i;\sigma)$ corresponds to $\tau_{i,n}(\sigma)$ in \cite{Konheim1966} and to
$\ell(i;\sigma)$ in \cite{bib:ColmenarejoHarris}.

In the case that $\sigma=\id$ is the identity permutation, the number $L_\ell(i,\id)$ is either $i$ (if $i\leq\ell$) or $\ell+1$ (if $\ell+1\leq i\leq n$), so $|\Out_{n,\ell}^{-1}(\id)|=(\ell+1)^{n-\ell}\cdot\ell!$.
These numbers occur as sequence \#\seqnum{A299504} in \cite{OEIS}, where they count certain permutations; here, they count certain parking functions.

It follows from~\eqref{fibersizes} that 
\begin{equation}\label{New formula for ell interval pfs}
|\IPF_n(\ell)|=\sum_{\sigma\in\mathfrak{S}_n} \prod_{i=1}^n L_\ell(i;\sigma).
\end{equation}
This is more efficient for explicit computation than iterating over all parking functions and picking out those with maximum displacement at most $\ell$.  In general, we do not have a closed formula for $|\IPF_n(\ell)|$ except in extreme cases; see \cref{ex:basic counts}.

In \cite[Props.~5.2, 5.3]{bib:ColmenarejoHarris}, the authors provide $q$-analogues of equations~\eqref{fibersizes} and~\eqref{New formula for ell interval pfs} for parking functions using total displacement. They extend this to a $qt$-analogue in \cite[Cor.~5.3]{bib:ColmenarejoHarris} using permutation statistics. We can obtain a similar analogue for $\ell$-interval parking functions.

\begin{theorem}\label{thm:disp-enumerator-for-ell-interval}
Let $0\leq\ell\leq n-1$. For all $\sigma\in \S_n$, we have 
\begin{equation} \label{fibersizesq}
\sum_{\alpha\in \Out_{n,\ell}^{-1}(\sigma)}q^{\disp(\alpha)}=\prod_{i=1}^n[L_\ell(i;\sigma)]_q,
\end{equation}
and therefore,
\begin{equation} \label{dispalphainvcar}
\Phi_{n,\ell}(q,t) = \sum_{\alpha\in \IPF_n(\ell)}q^{\disp(\alpha)}t^{\inv(\car_\alpha)}=\sum_{\sigma\in \S_n}t^{\inv(\sigma)}\prod_{i=1}^\ell [L_\ell(\sigma;i)]_q.
\end{equation}
\end{theorem}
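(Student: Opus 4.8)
The plan is to prove the fiber identity \eqref{fibersizesq} by upgrading the cardinality count \eqref{fibersizes} to a displacement-graded bijection, and then to obtain \eqref{dispalphainvcar} by summing over the fibers of $\Out_{n,\ell}$. Fix $\sigma\in\S_n$, and for each $i\in[n]$ let $t=t_i$ be the index appearing in \eqref{define-Ll}, so that $\sigma_{t_i},\dots,\sigma_i$ is the longest run ending at position $i$ with all entries $\le\sigma_i$. The key structural claim is that, for $\alpha$ to satisfy $\car_\alpha=\sigma$ and $\maxdisp(\alpha)\le\ell$, the preferred spot $a_{\sigma_i}$ of car $\sigma_i$ may be chosen freely within the contiguous block $\{\,i-L_\ell(i;\sigma)+1,\ \dots,\ i\,\}$, independently of the choices made for the other cars. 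Indeed, $a_{\sigma_i}\le i$ is forced because car $\sigma_i$ parks in spot $i$; the lower bound $a_{\sigma_i}\ge t_i$ holds because, when $t_i>1$, a smaller preference would let car $\sigma_i$ park in spot $t_i-1$, which is free at its arrival since $\sigma_{t_i-1}>\sigma_i$ means that car arrives later (and when $t_i=1$ the bound is vacuous); and the further lower bound $a_{\sigma_i}\ge i-\ell$ is exactly the displacement constraint $\disp_\alpha(\sigma_i)=i-a_{\sigma_i}\le\ell$. This is precisely the interpretation of $L_\ell(i;\sigma)$ recorded after \eqref{define-Ll}, and it is what renders \eqref{fibersizes} a product.

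Granting this, I would record the resulting bijection explicitly. Since $\disp_\alpha(\sigma_i)=i-a_{\sigma_i}$ runs over $\{0,1,\dots,L_\ell(i;\sigma)-1\}$ as $a_{\sigma_i}$ runs over its admissible block, the map
\[
\alpha\ \longmapsto\ \bigl(\disp_\alpha(\sigma_1),\,\disp_\alpha(\sigma_2),\,\dots,\,\disp_\alpha(\sigma_n)\bigr)
\]
is a bijection from $\Out_{n,\ell}^{-1}(\sigma)$ onto the product set $\prod_{i=1}^n\{0,1,\dots,L_\ell(i;\sigma)-1\}$. Because the cars are merely reindexed by $\sigma$, the total displacement satisfies $\disp(\alpha)=\sum_{i=1}^n\disp_\alpha(i)=\sum_{i=1}^n\disp_\alpha(\sigma_i)$, so tracking $q^{\disp(\alpha)}$ through this bijection and applying the distributive law gives
\[
\sum_{\alpha\in\Out_{n,\ell}^{-1}(\sigma)}q^{\disp(\alpha)}=\prod_{i=1}^n\ \sum_{d=0}^{L_\ell(i;\sigma)-1}q^{d}=\prod_{i=1}^n[L_\ell(i;\sigma)]_q,
\]
which is \eqref{fibersizesq}.

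To assemble \eqref{dispalphainvcar}, I would partition $\IPF_n(\ell)$ into the fibers of $\Out_{n,\ell}$. On the fiber over $\sigma$ the statistic $\inv(\car_\alpha)=\inv(\sigma)$ is constant, so $t^{\inv(\car_\alpha)}$ pulls out of the inner sum and \eqref{fibersizesq} yields
\[
\Phi_{n,\ell}(q,t)=\sum_{\sigma\in\S_n}t^{\inv(\sigma)}\sum_{\alpha\in\Out_{n,\ell}^{-1}(\sigma)}q^{\disp(\alpha)}=\sum_{\sigma\in\S_n}t^{\inv(\sigma)}\prod_{i=1}^n[L_\ell(i;\sigma)]_q,
\]
as claimed, with the product running over all $i\in[n]$. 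I expect the only genuine obstacle to be the independence assertion in the first paragraph: one must confirm that the admissible block of preferences for car $\sigma_i$ depends on $\sigma$ alone and not on the specific preferences chosen for the other cars, so that the fiber really is a Cartesian product. The cleanest way to nail this down is an induction filling spots $1,2,\dots,n$ in order, verifying at each step that any preference in the prescribed block sends car $\sigma_i$ to spot $i$ regardless of the earlier admissible choices; this is the same bookkeeping that underlies \eqref{fibersizes}, now carried out while recording the displacement $i-a_{\sigma_i}$.
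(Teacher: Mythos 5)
Your proposal is correct and takes essentially the same approach as the paper: both arguments factor the fiber $\Out_{n,\ell}^{-1}(\sigma)$ into independent choices of each car's preference $a_{\sigma_i}$ within the interval $[i-L_\ell(i;\sigma)+1,\,i]$, observe that choosing $a_{\sigma_i}=i-j$ contributes $j$ to total displacement so the fiber generating function is $\prod_{i=1}^n[L_\ell(i;\sigma)]_q$, and then sum over fibers using that $t^{\inv(\car_\alpha)}=t^{\inv(\sigma)}$ is constant on each fiber. The only differences are cosmetic: you spell out the two lower bounds ($a_{\sigma_i}\geq t_i$ from the parking requirement and $a_{\sigma_i}\geq i-\ell$ from the displacement cap) and the induction giving the Cartesian-product structure, which the paper leaves implicit in its discussion surrounding \eqref{fibersizes}, and you tacitly correct the typographical slips in \eqref{dispalphainvcar} (the product should run to $n$, not $\ell$).
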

\begin{proof}
   Let $\sigma\in\S_n$ and $\alpha=(a_1,a_2,\dots,a_\ell)\in\Out_{n,\ell}^{-1}(\sigma)$. For each $i$, if car $\sigma_i$ wants to park in spot $i$, then $a_i$
   must be selected from $[i-L_\ell(i;\sigma)+1,i]$. Hence, for each $j\in \{0,\dots,L_\ell(i;\sigma)-1\}$, the selection of $a_i=i-j\in [i-L_\ell(i;\sigma)+1,i]$ contributes $j$ to the total displacement. Since $[L_\ell(i;\sigma)]_q=1+q+\cdots+q^{L_\ell(i;\sigma)-1}$, we obtain~\eqref{fibersizesq}. For~\eqref{dispalphainvcar}, observe that
   \[\Phi_{n,\ell}(q,t)=\sum_{\sigma \in \Sym_n} \lp\sum_{\alpha\in \Out_{n,\ell}^{-1}(\sigma)}q^{\disp(\alpha)}\rp  t^{\inv(\sigma)}\]
   and then apply~\eqref{fibersizesq}. 
\end{proof}

The $\ell=n-1$ case is \cite[Cor.~5.3]{bib:ColmenarejoHarris}.  As noted there, one can replace inversion number in~\eqref{dispalphainvcar} with \textit{any} statistic on permutations.
For the case $\ell=n-2$, we can rewrite the formulas in \Cref{thm:disp-enumerator-for-ell-interval} in terms of displacement enumerators for (classical) parking functions.  This also follows from the description of $\PF_n\setminus\IPF_n(n-2)$ in \cref{ex:basic counts}. 

\begin{corollary} \label{ell-minus-two}
For all $n\geq 2$, 
\[\Phi_{n,n-2}(q,t)=\sum_{\alpha\in \PF_n}q^{\disp(\alpha)}t^{\inv(\alpha)}-(qt)^{n-1}\sum_{\beta\in \PF_{n-1}}q^{\disp(\beta)}t^{\inv(\beta)-\ones(\beta)},\]
where $\ones(\beta)=|\{i\in[n-1]\suchthat b_i=1\}|$.
\end{corollary}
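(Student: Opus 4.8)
The plan is to exploit the set complementation $\IPF_n(n-2)=\PF_n\setminus B$, where $B=\{\alpha\in\PF_n\suchthat \maxdisp(\alpha)=n-1\}$, which is already described in \Cref{ex:basic counts}. Since displacement and inversion number are defined on all of $\PF_n$, the generating function splits additively as
\[
\Phi_{n,n-2}(q,t)=\sum_{\alpha\in\PF_n}q^{\disp(\alpha)}t^{\inv(\alpha)}-\sum_{\alpha\in B}q^{\disp(\alpha)}t^{\inv(\alpha)},
\]
whose first summand is exactly the first term on the right-hand side of the claim. It therefore remains only to show that the second summand equals $(qt)^{n-1}\sum_{\beta\in\PF_{n-1}}q^{\disp(\beta)}t^{\inv(\beta)-\ones(\beta)}$.

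To do this, I would first set up the bijection $B\to\PF_{n-1}$ that deletes the last coordinate. By the $\ell=n-2$ analysis in \Cref{ex:basic counts}, every $\alpha=(a_1,\dots,a_n)\in B$ has $a_n=1$, and the truncation $\beta=(a_1,\dots,a_{n-1})$ lies in $\PF_{n-1}$; conversely, appending a final entry equal to $1$ to any $\beta\in\PF_{n-1}$ produces an element of $B$. The key structural point is that the first $n-1$ cars fill spots $1,\dots,n-1$ in $\alpha$ exactly as they do in $\beta$ (car $n$ never interferes, since it arrives last and is forced into spot $n$), while car $n$ has displacement $n-1$.

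With the bijection in hand, the proof reduces to tracking the two statistics. For displacement, $\disp_\alpha(i)=\disp_\beta(i)$ for $i\leq n-1$ and $\disp_\alpha(n)=n-1$, so $\disp(\alpha)=\disp(\beta)+(n-1)$, accounting for the factor $q^{n-1}$. For inversions, I would partition $\Inv(\alpha)$ into the pairs $(i,j)$ with $j\leq n-1$ (these are precisely the inversions of $\beta$) and the pairs $(i,n)$ with $i\leq n-1$; a pair of the latter type is an inversion exactly when $a_i>a_n=1$, and there are $(n-1)-\ones(\beta)$ such indices. Hence $\inv(\alpha)=\inv(\beta)+(n-1)-\ones(\beta)$, giving the factor $t^{n-1}t^{-\ones(\beta)}$. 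Summing over $\beta\in\PF_{n-1}$ and combining the two factors yields the claimed second term.

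The argument is essentially a bijection together with bookkeeping, so there is no serious obstacle; the only point requiring care is the inversion count at the final coordinate, where the trailing entry $a_n=1$ forms an inversion with every earlier entry that is not itself a $1$. This is precisely what introduces the $-\ones(\beta)$ correction in the exponent of $t$, and it is worth verifying that this correction is $\ones(\beta)$ (the number of ones among the first $n-1$ entries) rather than the total number of ones in $\alpha$.
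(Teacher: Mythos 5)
Your proposal is correct and is exactly the derivation the paper itself intends: \Cref{ell-minus-two} is stated immediately after the remark that it ``also follows from the description of $\PF_n\setminus\IPF_n(n-2)$ in \cref{ex:basic counts},'' and your truncation bijection $B\to\PF_{n-1}$ together with the bookkeeping $\disp(\alpha)=\disp(\beta)+(n-1)$ and $\inv(\alpha)=\inv(\beta)+(n-1)-\ones(\beta)$ is precisely that argument written out in full; each step (the characterization of $B$, the displacement count, and the inversion count at the final coordinate) is justified correctly.

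One point deserves to be made explicit, because it is silently elided both in your first displayed equation and in the corollary as printed. The paper defines $\Phi_{n,\ell}(q,t)=\sum_{\alpha\in\IPF_n(\ell)}q^{\disp(\alpha)}t^{\inv(\car_\alpha)}$ using inversions of the \emph{car permutation}, whereas your proof (and the right-hand side of the corollary) uses inversions of the word $\alpha$ itself. These coincide for $\ell\leq 1$ (by \Cref{lem: inv of outcome} together with $\inv(\sigma)=\inv(\sigma^{-1})$), but not on $\IPF_n(n-2)$ once $n\geq 4$: among the rearrangements of $(1,1,2,2)$ lying in $\IPF_4(2)$, namely $(1,1,2,2)$, $(1,2,1,2)$, $(2,1,1,2)$, the word-inversion enumerator is $q^4(1+t+t^2)$ while the car-inversion enumerator is $q^4(2+t)$, and one can check that the discrepancy does not cancel over all of $\IPF_4(2)$. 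Consequently, the displayed identity holds only when $\inv$ is read as word inversions throughout; that is the version your argument proves, and it is also the version the paper actually needs downstream (\Cref{cor: maj-versions-n-1-1} is deduced by applying the Foata transform, which exchanges word-inv and word-maj). So this is an inconsistency in the paper's notation rather than a flaw in your reasoning, but a careful write-up should either note that the left-hand side is $\sum_{\alpha\in\IPF_n(n-2)}q^{\disp(\alpha)}t^{\inv(\alpha)}$ by convention, or observe that the car-inversion analogue takes the different form $\Phi_{n,n-2}=\Phi_{n,n-1}-q^{n-1}\Phi_{n-1,n-2}$ (with no $t^{n-1}$ or $\ones$ correction), which is what one gets by instead specializing \Cref{thm:disp-enumerator-for-ell-interval}.
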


In the case $\ell=1$ (unit interval parking functions), the formula of \cref{thm:disp-enumerator-for-ell-interval} can be written in a very explicit form as a generating function for permutations.

\begin{theorem}\label{thm:disp-inv-enumerator-for-upfs}
For all $n\geq 1$,
\[\Phi_{n,1}(q,t)=\sum_{\sigma\in \S_n}(1+q)^{\asc(\sigma)}t^{\inv(\sigma)}.\]
\end{theorem}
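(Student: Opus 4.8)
The goal is to show that for $\ell=1$ the product formula \eqref{dispalphainvcar} collapses to $\Phi_{n,1}(q,t)=\sum_{\sigma}(1+q)^{\asc(\sigma)}t^{\inv(\sigma)}$.  By \cref{thm:disp-enumerator-for-ell-interval}, it suffices to prove that for each $\sigma\in\S_n$,
\[
\prod_{i=1}^n [L_1(i;\sigma)]_q = (1+q)^{\asc(\sigma)}.
\]
So the whole problem reduces to understanding the factors $L_1(i;\sigma)$ when $\ell=1$.  I would first unpack the definition \eqref{define-Ll}: $L_1(i;\sigma)=\min(2,\,i-t+1)$, where $[t,i]$ is the maximal run of positions ending at $i$ on which every entry is $\le\sigma_i$.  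The key observation is that this minimum is either $1$ or $2$, so each factor $[L_1(i;\sigma)]_q$ is either $[1]_q=1$ or $[2]_q=1+q$.  Thus the product is simply $(1+q)^{d}$, where $d$ is the number of positions $i$ contributing a factor of $2$, and the entire argument comes down to identifying $d$ with $\asc(\sigma)$.

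\emph{The main step} is the combinatorial identity $d=\asc(\sigma)$.  I would argue as follows.  A position $i$ gives $L_1(i;\sigma)=2$ exactly when $i-t+1\ge 2$, i.e.\ when the maximal run of entries $\le\sigma_i$ ending at position $i$ has length at least $2$; equivalently, when $t<i$, i.e.\ when $\sigma_{i-1}\le\sigma_i$.  Since $\sigma$ is a permutation (no repeated values), $\sigma_{i-1}\le\sigma_i$ is the same as $\sigma_{i-1}<\sigma_i$, which is precisely the condition that position $i-1$ is an ascent of $\sigma$.  (For $i=1$ the run has length $1$, giving a factor $1$, consistent with there being no ascent to the left of the first position.)  Hence the positions $i\in[2,n]$ with $L_1(i;\sigma)=2$ are in bijection with the ascents $i-1\in[n-1]$ of $\sigma$, so $d=\asc(\sigma)$, completing the computation.

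\emph{The expected obstacle} is making the run-length analysis airtight: one must check that for $\ell=1$ the relevant feature is genuinely only whether $\sigma_{i-1}<\sigma_i$, and that longer ascending runs do not inflate the factor beyond $2$.  This is exactly where the $\min$ with $\ell+1=2$ does the work: even if the run $[t,i]$ is long, $L_1(i;\sigma)$ is capped at $2$, so only the \emph{existence} of an immediate left-neighbor smaller than $\sigma_i$ matters, not how far the run extends.  I would state this cap explicitly and then translate it into the ascent condition.  Once the per-factor evaluation $[L_1(i;\sigma)]_q\in\{1,1+q\}$ and the identification of the ``$1+q$'' positions with ascents are in place, the theorem follows immediately by substituting into \eqref{dispalphainvcar}, with no residual summation to simplify.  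As a consistency check, setting $q=t=0$ (or comparing with \eqref{eq:ascent-power-of-2-are-fubini}) recovers $\Phi_{n,1}(1,1)=\sum_\sigma 2^{\asc(\sigma)}=\Fub_n=|\UPF_n|$, matching \cref{rmk:fubini}.
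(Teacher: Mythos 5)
Your proposal is correct and follows essentially the same route as the paper: specialize \eqref{define-Ll} at $\ell=1$ to see that $[L_1(i;\sigma)]_q$ equals $1+q$ exactly when $i-1$ is an ascent of $\sigma$ (and $1$ otherwise), then substitute into \eqref{dispalphainvcar}. The only difference is cosmetic: the paper's proof additionally cites \cref{lem: inv of outcome} and the identity $\inv(\sigma)=\inv(\sigma^{-1})$, which serve to identify $\inv(\car_\alpha)$ with $\inv(\alpha)$ (the form in which the theorem is used later), not to establish the statement as literally written.
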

\begin{proof}
Taking $\ell=1$ in \eqref{define-Ll}, we see that
\begin{equation*} 
L_1(i;\sigma)=\begin{cases}
2&\text{if $i-1$ is an ascent of $\sigma$,}\\
1&\text{otherwise.}
\end{cases}
\end{equation*}
The result now follows from \Cref{lem: inv of outcome} and \Cref{thm:disp-enumerator-for-ell-interval}, together with the identity $\inv(\sigma)=\inv(\sigma^{-1})$.
\end{proof}

Setting $q=t=1$ in \Cref{thm:disp-inv-enumerator-for-upfs} recovers~\eqref{eq:ascent-power-of-2-are-fubini}.
More generally, setting $t=1$ produces
\begin{equation}\label{eq:stirling-to-ascents}
\sum_{\alpha\in \IPF_n(1)}q^{\disp(\alpha)}
=\sum_{k=1}^{n} k!S(n,k)q^{n-k}
=\sum_{\sigma\in \S_n}(1+q)^{\asc(\sigma)},
\end{equation}
where $S(n,k)$ denotes the Stirling number of the second kind; the second equality is \cite[p.~269, eqn.~(6.39)]{GKP}.  Note that $k!S(n,k)$ is the number of unit interval parking functions with displacement $n-k$.  
Extracting the $q=1$ coefficient from the first and third expressions in~\eqref{eq:stirling-to-ascents}, we recover the enumerative result \cite[Thm.~4]{Hanoi} that $|\{\alpha\in \UPF_n\suchthat \disp(\alpha)=1\}|$ is the $n$th Lah number (\#\seqnum{A001286} of \cite{OEIS}).

\begin{remark}
Setting $q=-1$ in \Cref{thm:disp-inv-enumerator-for-upfs} yields
    \begin{equation}\label{eq:minus-1-eval-disp-upf}
    \sum_{\alpha\in \UPF_n}(-1)^{\disp(\alpha)}t^{\inv(\alpha)}
    =t^{\binom{n}{2}}
    \end{equation}
since the permutation $w_0(i)=n-i+1$ is the only one with no ascents.
(Further specializing $t=1$ recovers~\eqref{eq:ascent-power-of-2-are-fubini}.)
This formula can also be proven combinatorially by constructing a sign-reversing, inversion-preserving involution on unit interval parking functions. Given $\alpha \in \UPF_n\setminus\{w_0\}$, let $\pi_1\mid\pi_2\mid\cdots\mid\pi_m$ be its block structure (see \Cref{thm:upf_rearrangement}), and let $k$ be the smallest index such that either (i) $|\pi_k|\geq 2$, or (ii) $|\pi_k|=1$ and $\pi_k$ appears earlier in $\alpha$ than every member of $\pi_{k+1}$.
(The condition $\alpha\neq w_0$ implies that at least one of conditions (i), (ii) must be satisfied.) Then, define $\phi(\alpha)$ by modifying $\alpha$ as follows:
\begin{enumerate}
    \item If $\pi_k=(j,j,j+1,j+2,\dots)$, then replace the second occurrence of $j$ by $j+1$.
    \item If $\pi_k=(j)$, then replace the first occurrence of $j+1$ in $\alpha$ by $j$.
\end{enumerate}
The map $\phi$ is a fixed-point-free involution on $\UPF_n\setminus\{w_0\}$ satisfying $\disp(\phi(\alpha))=\disp(\alpha)\pm1$ and $\inv(\phi(\alpha))=\inv(\alpha)$ (we omit the proof).  If one associates unit interval parking functions with faces of the permutohedron, as done in~\cite{unit_perm}, the function $\phi$ appears to give an acyclic matching in the sense of discrete Morse theory~\cite{DMT2,DMT1}.
\end{remark}

\begin{remark}
We discuss a broader implication of \Cref{lem: inv of outcome}. Let $\alpha\in \UPF_n$ and set $\sigma=\car_\alpha$. Then $\Inv(\alpha)=\Inv(\sigma^{-1})$
(Lemma~\ref{lem: inv of outcome}), so $\Des(\alpha)=\Des(\sigma^{-1})$, and in particular
\[\des(\alpha)=\des(\sigma^{-1})\mbox{ and } \maj(\alpha)=\maj(\sigma^{-1}).\]
Then the argument of \Cref{thm:disp-inv-enumerator-for-upfs} implies that
    \begin{equation}
    \sum_{\alpha\in \UPF_n} q^{\disp(\alpha)}t^{\inv(\alpha)}x^{\des(\alpha)}y^{\maj(\alpha)}
    =\sum_{\sigma\in \S_n}(1+q)^{\asc(\sigma)}t^{\inv(\sigma)}x^{\des(\sigma^{-1})}y^{\maj(\sigma^{-1})}.
    \end{equation}
An analogous formula holds for any other statistic that depends only on the inversion set.
\end{remark}

\subsection{A cyclic sieving phenomenon}\label{section:csp}

Let $X$ be a finite set, let $X(q)$ be a polynomial in~$q$ such that $X(1)=|X|$, let $C=\langle g\rangle$ be a cyclic group of order~$n$ acting on $X$, and let $\omega$ be a primitive $n$th root of unity.  The triple $(X,X(q),C)$ is said to exhibit the \defterm{cyclic sieving phenomenon} \cite{reiner2004cyclic} if 
$X(\omega^j)=|\{x\in X\suchthat c^j(x)=x\}|$ for every $c\in C$.

We use the following facts (setting $a=1$ in \cite[Equation (4.5)]{reiner2004cyclic}).  Let $\omega$ be a primitive  $n$th root of unity as before, let $d$ be a divisor of $n$, and let $c=(c_1,c_2,\dots,c_k)\vDash n$.  Then
\begin{equation}\label{eq:qbinom-at-root-of-unity-general}
\qbinom{n}{c_1,c_2,\dots,c_\ell}_{t=\omega^{n/d}}
=\binom{\frac{n}{d}}{\frac{c_1}{d},\frac{c_2}{d},\dots,\frac{c_k}{d}},
\end{equation}
where we adopt the convention that any multinomial expression involving one or more non-integers is zero.

As shown in \cite[Lemma 3.12, Proposition 3.13, Corollary 3.14]{unit_perm}, there is an $\S_n$-action on $\UPF_n$, in which $\alpha$ and $\beta$ belong to the same orbit if and only if $\alpha^\uparrow=\beta^\uparrow$; that is, $\alpha$ and $\beta$ have the same block structure.  In particular, $\UPF_n^\uparrow$ is a system of representatives for the $\Sym_n$-orbits of $\UPF$, and total displacement is constant on each orbit. 

The cyclic subgroup $C_n$ of $\S_n$ generated by the $n$-cycle $g=(1~2~\cdots~n)\in \S_n$
acts on $\UPF_n$ by restricting the $\S_n$-action on $\UPF_n$ to $C_n$. For $k\in \N$, let $\UPF_{n,k}^{\disp}$ be the set of unit interval parking functions with total displacement $k$
and set 
\[f_{n,k}^{\disp}(t)=\sum_{\alpha\in\UPF_{n,k}^{\disp}}t^{\inv(\alpha)},\]
so that $\Phi_{n,1}(q,t)=\sum_k q^k f_{n,k}^{\disp}(t)$.

\begin{theorem} \label{thm:cyclic-sieving}
The triple $(\UPF_{n,k}^{\disp},f_{n,k}^{\disp}(t),C_n)$ satisfies the cyclic sieving phenomenon for each $n$ and $k$. 
That is, if $\omega$ is a primitive $n$th root of unity, then, for all $n,k,j$,
\[f_{n,k}^{\disp}(\omega^j) = \sum_{\alpha\in \UPF_{n,k}^{\disp}} (\omega^j)^{\inv(\alpha)}= |\{\alpha\in \UPF_{n,k}^{\disp}\suchthat  g^j\alpha= \alpha\}|.\]
\end{theorem}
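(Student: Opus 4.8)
The plan is to translate the statement into a cyclic sieving phenomenon for surjective words under rotation, and then to evaluate a sum of $q$-multinomial coefficients at roots of unity using~\eqref{eq:qbinom-at-root-of-unity-general}.

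First I would set $m=n-k$ and use \cref{cor:block-bijection} to identify each $\alpha\in\UPF_n$ with its block word $\alpha^\uparrow$ together with the surjection $s\colon[n]\to[m]$ recording block membership. Since the total displacement of a UPF equals $n$ minus its number of blocks (see the discussion following~\eqref{eq:stirling-to-ascents}), the set $\UPF_{n,k}^{\disp}$ corresponds precisely to the set of \emph{all} surjective words $s\in[m]^n$. Running the block-structure argument from the proof of \cref{lem: inv of outcome}, for $i<j$ the pair $(i,j)$ is an inversion of $\alpha$ exactly when $a_i$ and $a_j$ lie in different blocks with $s(i)>s(j)$; hence $\Inv(\alpha)=\Inv(s)$ and $\inv(\alpha)=\inv(s)$. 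Finally, through \cref{cor:block-bijection} the $\Sym_n$-action of~\cite{unit_perm} is the action permuting the domain of $s$, so $g=(1\;2\;\cdots\;n)$ acts by cyclically rotating the word $s(1)s(2)\cdots s(n)$.

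With these identifications, $f_{n,k}^{\disp}(t)=\sum_s t^{\inv(s)}$ summed over surjective $s\in[m]^n$, and splitting by content gives
\[
f_{n,k}^{\disp}(t)=\sum_{(c_1,\dots,c_m)\vDash n}\qbinom{n}{c_1,\dots,c_m}_t,
\]
using the classical fact that the inversion generating function over words of content $(c_1,\dots,c_m)$ is the $q$-multinomial coefficient. For the fixed-point side, write $e=\gcd(n,j)$; then $g^j$ has $e$ orbits on $[n]$, a word is fixed by $g^j$ iff it is constant on these orbits, and a fixed surjective word corresponds to a surjection $[e]\to[m]$. Thus $|\{\alpha\in\UPF_{n,k}^{\disp}\colon g^j\alpha=\alpha\}|=m!\,S(e,m)$. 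To evaluate $f_{n,k}^{\disp}(\omega^j)$, note $\omega^j$ is a primitive $d$-th root of unity with $d=n/e$, so~\eqref{eq:qbinom-at-root-of-unity-general} (valid at any primitive $d$-th root of unity) annihilates every term except those compositions all of whose parts are divisible by $d$. Writing $c_i=d\,b_i$, the surviving multinomials become $\binom{e}{b_1,\dots,b_m}$ indexed by $(b_1,\dots,b_m)\vDash e$, whence
\[
f_{n,k}^{\disp}(\omega^j)=\sum_{(b_1,\dots,b_m)\vDash e}\binom{e}{b_1,\dots,b_m}=m!\,S(e,m),
\]
the number of surjections $[e]\to[m]$. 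Comparing the two counts establishes the cyclic sieving phenomenon.

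The main obstacle is setting up the dictionary correctly: verifying that $\inv(\alpha)$ agrees with the inversion number of the associated surjective word, and in particular confirming that the restriction to $C_n$ of the $\Sym_n$-action of~\cite{unit_perm} really is cyclic rotation of $s$, so that the fixed-point sets are exactly the rotation-invariant surjective words. Once this is in place, the root-of-unity evaluation is a routine application of~\eqref{eq:qbinom-at-root-of-unity-general} together with the elementary surjection--composition identity $\sum_{(b_1,\dots,b_m)\vDash e}\binom{e}{b_1,\dots,b_m}=m!\,S(e,m)$.
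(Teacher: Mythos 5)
Your proof is correct and follows essentially the same route as the paper's: identify $\UPF_{n,k}^{\disp}$ with surjective words via block structure, note that inversions are preserved by passing to the block-membership word, apply MacMahon's $q$-multinomial formula, evaluate at roots of unity via \eqref{eq:qbinom-at-root-of-unity-general}, and compare with fixed points of $g^j$, which (by the cited result of \cite{unit_perm} that the paper also relies on) are exactly the words constant on congruence classes. The only cosmetic differences are that you aggregate both sides into the closed form $m!\,S(e,m)$ with $e=\gcd(n,j)$ and handle arbitrary $j$ directly, whereas the paper reduces to the case $j\mid n$ and matches the root-of-unity evaluation against the fixed-point count orbit-by-orbit rather than in total.
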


\begin{proof}
Let $\omega$ be a primitive $n$th root of unity.  It suffices to consider the case that $n=dj$ for some positive integers $d,j$, so that $\omega^j$ is a primitive $d$th root of unity.
For each $\beta\in\UPF_n^\uparrow$, let $O(\beta)$ be its $\Sym_n$-orbit, and write its block structure as $\pi^\beta_1\mid \pi^\beta_2\mid \cdots\mid \pi^ \beta_{m_\beta}$.

Recall that the elements of each block of $\alpha\in\UPF_n$ occur in weakly increasing order in $\alpha$, left to right.  Therefore, the number of inversions of~$\alpha$ is unchanged by replacing each element in its $j$th block with the number~$j$.  Accordingly, 
\begin{align}\nonumber
\sum_{\alpha\in O(\beta)}t^{\inv(\alpha)}&=\sum_{u\in \S_n \cdot w_\beta}t^{\inv(u)}\\
&=\qbinom{n}{|\pi^\beta_1|,|\pi^\beta_2|,\;\;\dots,\;|\pi^\beta_{m_\beta}|}_t,\label{eq:to ref}
 \end{align}
where $w_\beta={1\cdots 1}2\cdots 2\cdots {m_\beta\cdots m_\beta}$ is a word with $|\pi^\beta_1|$ 1's, $|\pi^\beta_2|$ 2's, etc, written in weakly increasing order. 
The equality in \eqref{eq:to ref} is \cite[Prop.~1.7.1]{stanley2012enumerative}, originally due to MacMahon \cite[pp.~314-315]{MacMahon}.
Now
\begin{align}
\sum_{\alpha\in \UPF_n}q^{\disp(\alpha)}(\omega^j)^{\inv(\alpha)}
&=\sum_{\beta\in\UPF^\uparrow_n} q^{\disp(\beta)}\sum_{\alpha\in O(\beta)} (\omega^j)^{\inv(\alpha)}\notag\\
&=\sum_{\beta\in\UPF^\uparrow_n} q^{\disp(\beta)} \qbinom{n}{|\pi^\beta_1|,|\pi^\beta_2|,\;\dots,\;|\pi^\beta_{m_\beta}|}_{t=\omega^j}\notag\\
&= \sum_{\beta\in\UPF^\uparrow_n} q^{\disp(\beta)} \binom{n/d}{|\pi^\beta_1|/d,|\pi^\beta_2|/d,\;\dots,\;|\pi^\beta_{m_\beta}|/d},\label{sieve:1}
\end{align}
where the equality in \eqref{sieve:1} follows from~\eqref{eq:qbinom-at-root-of-unity-general}.

We can interpret the multinomial coefficient in~\eqref{sieve:1} combinatorially. By \cite[Prop.~3.13]{unit_perm}, an element $\alpha=(a_1,a_2,\dots,a_n)\in\UPF_{n}$ is fixed by $g^j$ if and only if, for every $i\in[j]$, all entries $a_i,a_{i+j},\dots,a_{i+(d-1)j}$ whose indices form a congruence class modulo~$j$ belong to a common block.  Hence, if $\alpha$ has block structure $\pi_1\mid\pi_2\mid\cdots \mid\pi_m$, then there are 
\[\binom{n/d}{|\pi_1|/d,|\pi_2|/d,\;\dots,|\pi_m|/d}\]
elements in the $\S_n$-orbit of $\alpha$ that are fixed by $g^j$. (The elements of each block have to appear in increasing order, so the only choice to make is how to apportion the congruence classes among the blocks.)
Thus, \eqref{sieve:1} becomes
\begin{equation} \label{sieve:2}
\sum_{\alpha\in \UPF_n}q^{\disp(\alpha)}(\omega^j)^{\inv(\alpha)}
= \sum_{\beta\in\UPF^\uparrow_n} q^{\disp(\beta)} |\{\alpha\in O(\beta)\suchthat g^j\alpha = \alpha\}|
\end{equation}
and grouping together unit interval parking functions with the same total displacement, we have
\[\sum_{\alpha\in \UPF_{n,k}^{\disp}}
(\omega^j)^{\inv(\alpha)}= |\{\alpha\in \UPF_{n,k}^{\disp}\suchthat  g^j\alpha= \alpha\}|,\]
as desired.
\end{proof}

\begin{corollary}\label{cor: CSP-evaluation}
Let $n$ be a positive integer.
\begin{enumerate}
\item  If $\omega$ is a primitive $n$th root of unity, then
    \[\sum_{\alpha\in \UPF_n}q^{\disp(\alpha)}\omega^{\inv(\alpha)}=q^{n-1}.\]
\item If $n$ is even, then
   \[\sum_{\alpha\in \UPF_n}q^{\disp(\alpha)}(-1)^{\inv(\alpha)}
   = q^{n/2}\sum_{\alpha\in \UPF_{n/2}}q^{\disp(\alpha)}.\]
\end{enumerate}   
\end{corollary}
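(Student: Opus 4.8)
The plan is to derive both identities as specializations of the cyclic sieving phenomenon established in \Cref{thm:cyclic-sieving}. Since $\Phi_{n,1}(q,t)=\sum_k q^k f_{n,k}^{\disp}(t)$, evaluating the left-hand sides amounts to evaluating each $f_{n,k}^{\disp}$ at a root of unity, and \Cref{thm:cyclic-sieving} converts these evaluations into counts of unit interval parking functions fixed by a power of the $n$-cycle $g=(1~2~\cdots~n)$. The engine of both parts is the combinatorial description of such fixed points from the proof of \Cref{thm:cyclic-sieving}: writing $n=dj$, the number of elements of an orbit $O(\beta)$ fixed by $g^j$ equals $\binom{n/d}{|\pi^\beta_1|/d,\dots,|\pi^\beta_{m}|/d}$, which is nonzero precisely when every block size of $\beta$ is divisible by $d$. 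I will also use repeatedly that a unit interval parking function with $m$ blocks has total displacement $n-m$ (immediate from \Cref{thm:upf_rearrangement}, since a block of size $s$ contributes displacement $s-1$).

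For part (1), a primitive $n$th root $\omega$ corresponds to $j=1$, $d=n$. Then $\binom{n/d}{\cdots}=\binom{1}{|\pi^\beta_1|/n,\dots}$ is nonzero only for the block word consisting of a single block of size $n$, in which case it equals $1$. That block word has displacement $n-1$ and is the unique $g$-fixed point. Summing the cyclic sieving identity over $k$ therefore collapses to the single term $q^{n-1}$, as claimed.

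For part (2), I observe that for a primitive $n$th root $\omega$ we have $\omega^{n/2}=-1$, so $(-1)^{\inv(\alpha)}=(\omega^{n/2})^{\inv(\alpha)}$ and the sum equals $\sum_k q^k f_{n,k}^{\disp}(\omega^{n/2})$. This is the case $j=n/2$, $d=2$, so \Cref{thm:cyclic-sieving} rewrites it as
\[\sum_{\beta}q^{\disp(\beta)}\binom{n/2}{|\pi^\beta_1|/2,\dots,|\pi^\beta_{m}|/2},\]
where $\beta$ ranges over weakly increasing unit interval parking functions all of whose blocks have even size. The final step is a size-halving bijection: the map sending the block word with block sizes $2c_1,\dots,2c_r$ to the unique $\delta\in\UPF_{n/2}^\uparrow$ with block sizes $c_1,\dots,c_r$ is a bijection onto $\UPF_{n/2}^\uparrow$. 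Under it the multinomial above becomes the orbit size $\binom{n/2}{c_1,\dots,c_r}$ of the image (\Cref{thm:upf_rearrangement}(3)), and the displacements satisfy $\disp(\beta)=n-r=(n/2)+((n/2)-r)=(n/2)+\disp(\delta)$, producing the prefactor $q^{n/2}$. Since $\sum_{\gamma\in\UPF_{n/2}}q^{\disp(\gamma)}=\sum_{\delta\in\UPF_{n/2}^\uparrow}q^{\disp(\delta)}\binom{n/2}{c_1,\dots,c_r}$, summing over $\delta$ recovers $q^{n/2}\sum_{\alpha\in\UPF_{n/2}}q^{\disp(\alpha)}$.

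The routine content lies entirely in part (2); the main thing to get right is the bookkeeping that matches the fixed-point multinomial $\binom{n/2}{|\pi^\beta|/2}$ with the orbit-size multinomial of the halved block word and tracks the displacement shift $\disp(\beta)=(n/2)+\disp(\delta)$. I expect no genuine obstacle beyond verifying that the doubling map is a bijection from compositions of $n/2$ onto compositions of $n$ into even parts and that it preserves the relevant statistics; everything else is a direct substitution into \Cref{thm:cyclic-sieving}.
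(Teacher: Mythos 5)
Your proposal is correct and follows essentially the same route as the paper: part (1) is the $j=1$ specialization of \cref{thm:cyclic-sieving} with the unique single-block fixed point of displacement $n-1$, and part (2) substitutes $j=n/2$ into \eqref{sieve:1}, restricts to increasing UPFs with even block sizes, and applies the same block-size-halving bijection with the displacement shift $\disp(\beta)=\disp(\delta)+n/2$. The only cosmetic difference is that you justify the final identification $\sum_{\delta\in\UPF_{n/2}^\uparrow}q^{\disp(\delta)}\binom{n/2}{c_1,\dots,c_r}=\sum_{\alpha\in\UPF_{n/2}}q^{\disp(\alpha)}$ directly via the orbit sizes in \cref{thm:upf_rearrangement}(3), whereas the paper cites \eqref{sieve:1} with $d=1$; these are the same fact.
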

\begin{proof}
(1) This is the case $j=1$ of \Cref{thm:cyclic-sieving}.  A UPF is fixed by $g$ if and only if it has a single block of size $n$.  There is exactly one such UPF, namely $\alpha=(1,1,2,\dots,n-1)$, which has total displacement $n-1$.

(2) Let $\UPF^{2}_n$ be the set of unit interval parking functions whose block sizes are all divisible by 2 and let $\UPF_n^{2\uparrow}=\UPF_n^2\cap \UPF_n^{\uparrow}$.  
Since an increasing UPF is determined by its list of block sizes, in order, there is a bijection $\phi\colon\UPF_n^{2\uparrow} \to \UPF_{n/2}^{\uparrow}$ given by dividing all block sizes by~2. (For instance, $\phi(1123\mid55\mid7789)=11\mid3\mid44$, because these are the unique increasing UPFs with block sizes $4,2,4$ and $2,1,2$ respectively.)  Note also that $\disp(\beta)=\disp(\phi(\beta))+n/2$.  Now substituting $j=n/2$ in~\eqref{sieve:1} yields
\begin{align*}
\sum_{\alpha\in \UPF_n}q^{\disp(\alpha)}(-1)^{\inv(\alpha)}
&= \sum_{\beta\in\UPF_n^{2\uparrow}} q^{\disp(\beta)} \binom{n/2}{|\pi^\beta_1|/2,\pi^\beta_2|/2,\;\dots,\;|\pi^\beta_{m_\beta}|/2} \\
&= q^{n/2}\sum_{\alpha\in \UPF_{n/2}^{\uparrow}}q^{\disp(\alpha)}\binom{n/2}{|\pi^\alpha_1|,|\pi^\alpha_2|,\;\dots,\;|\pi^\alpha_{m_\alpha}|}
  && \text{(setting $\alpha=\phi(\beta)$)}\\
&=q^{n/2}\sum_{\alpha\in \UPF_{n/2}}q^{\disp(\alpha)},
\end{align*}
where the last step follows from \eqref{sieve:1}, replacing $n$ and $j$ with $n/2$ and $d$ with 1.
\end{proof}

\subsection{From \texorpdfstring{$\ell=1$}{l=1} to \texorpdfstring{$\ell=2$}{l=2}}

\Cref{thm:disp-inv-enumerator-for-upfs} can be viewed as reducing the enumeration of $1$-interval parking functions to the enumeration of $0$-interval parking functions, which are just permutations. 
We now describe a similar reduction from $2$-interval parking functions to $1$-interval parking functions, although it is more complicated.

Let $\beta=(b_1,b_2,\ldots,b_n)\in \UPF_n$ have block structure $\pi_1\mid \pi_2\mid \cdots \mid \pi_m$.
Let $\RRR(\beta)$ be the set of numbers $i\in[n]$ such that $ b_{i}$ is the second entry in its block $\pi_j$, where $j>1$,  and $b_i$ occurs after the last entry of block $\pi_{j-1}$
Let $\SSS(\beta)$ be the set of numbers $i\in[n]$ such that $ b_{i}$ is the third or later entry in its block $\pi_j$.  Then
\begin{equation} \label{S-formula}
|\SSS(\beta)|=\sum_{i=1}^m\max(|\pi_i|-2,0).
\end{equation}

\begin{example}\label{ex:making sense of defs}
If $\beta=7511278935\in\UPF_{10}$, then its block structure is $\pi_1\mid\pi_2\mid\pi_3=1123\mid55\mid7789$. The second entry in block $\pi_2$ is $b_{10}=5$ and appears after the last entry of $\pi_1$, which is $b_9=3$. Hence,
$10\in\RRR(\beta)$. 
The second entry in block $\pi_3$ is $b_6=7$ and does not appear after the last entry of $\pi_2$, which is $b_{8}=7$. Hence, $6\notin\RRR(\beta)$. 
Thus, we have $\RRR(\beta)=\{10\}$. 
Lastly, in block $\pi_1=1123$ the third or later entries are $b_5=2$ and $b_9=3$, while in block $\pi_3=7789$ the third or later entries are $b_7=8$ and $b_8=9$. 
Hence, $\SSS(7511278935)=\{5,7,8,9\}$.
As expected, by \eqref{S-formula}, $|\SSS(\beta)|=2+0+2=4$.
\end{example}

Observe that
\begin{align}
\RRR(\beta)\cap\SSS(\beta)&=\emptyset, \ \text{and}\label{S-intersect-S}\\
\RRR(\beta)\cup\SSS(\beta)&=\{i\in[2,n]\suchthat \text{ spots $ b_i-1$ and $ b_i$ are both occupied when car $i$ parks}\}.\label{R-union-S}
\end{align}
In particular, if $i\in R(\beta)$, then spot $ b_i-1$ is occupied by the last car of block $\pi_{j-1}$, and spot $ b_i$ by the first car of block $\pi_j$.

\begin{example}
Continuing \Cref{ex:making sense of defs}, let $\beta=7511278935$ and we can confirm $\RRR(\beta)\cap\SSS(\beta)=\emptyset$. 
One can readily confirm that the only indices $i\in[2,n]$ such that spots $b_i-1$ and $b_i$ are occupied when car $i$ parks are precisely the indices $i\in\{5,7,8,9,10\}=\RRR(\beta)\cup\SSS(\beta)$. For example, when car $7$ parks, spots $b_7-1=8-1=7$ and $b_7=8$ are occupied by cars $1$ and $6$, respectively.
\end{example}

Define a map $\eta:\IPF_n(2)\to\UPF_n$ by
\[\eta(\alpha)_i=\begin{cases}
a_i+1&\disp_\alpha(i)=2,\\
a_i&\disp_\alpha(i)\leq 1.\\
\end{cases}\]

Informally, $\eta(\alpha)$ is the result of asking each car with displacement~2, ``Please lower your expectations by one spot.''  It is unsurprising, though not entirely obvious, that the result is a unit interval parking function with the same spot permutation, as we now prove.

\begin{lemma}\label{lem:phi2-well-def}
Let $\alpha\in\IPF_n(2)$ and $\beta=\eta(\alpha)$.  Then $\beta\in\UPF_n$, and $\spot_\beta=\spot_\alpha$.
\end{lemma}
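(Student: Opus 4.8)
The plan is to prove by induction on the car index $i$ that car~$i$ parks in the same spot in the $\beta$-process as in the $\alpha$-process, that is, $\spot_\beta(i)=\spot_\alpha(i)$, and simultaneously that its displacement in $\beta$ is at most~$1$. Establishing both facts for every $i$ immediately yields the two conclusions of the lemma: $\spot_\beta=\spot_\alpha$ (in particular every car parks, so $\beta$ is a genuine parking function), and $\maxdisp(\beta)\leq 1$, i.e.\ $\beta\in\UPF_n$.

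For the inductive step I would fix $i$ and assume $\spot_\beta(k)=\spot_\alpha(k)$ for all $k<i$. The crucial observation is that the set of spots occupied at the moment car~$i$ is about to park is the same in both processes, namely $\{\spot_\alpha(1),\dots,\spot_\alpha(i-1)\}$: the cars park in the common order $1,2,\dots,n$, and by the inductive hypothesis cars $1,\dots,i-1$ occupy identical spots. I would then split into two cases according to $\disp_\alpha(i)$. If $\disp_\alpha(i)\leq 1$, then $\beta_i=a_i$, so car~$i$ has the same preferred spot and faces the same occupied set, hence parks in the identical spot with identical (at most $1$) displacement.

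If $\disp_\alpha(i)=2$, then in the $\alpha$-process car~$i$ found both spots $a_i$ and $a_i+1$ occupied and parked in spot $a_i+2=\spot_\alpha(i)$. In the $\beta$-process car~$i$ now prefers $\beta_i=a_i+1$, which by the occupied-set observation is still occupied, while spot $a_i+2$ is still free, so car~$i$ again parks in $a_i+2=\spot_\alpha(i)$, now with displacement $(a_i+2)-(a_i+1)=1$; one checks $\beta_i=a_i+1\leq n$ since $a_i+2\leq n$. This completes the induction.

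The argument is a direct induction, so I do not anticipate a serious obstacle. The one point requiring care is the displacement-$2$ case, where the whole scheme hinges on the fact that a car has displacement $2$ in $\alpha$ \emph{precisely} because both spots $a_i$ and $a_i+1$ are occupied upon its arrival; this is exactly what guarantees that nudging its preference up to $a_i+1$ leaves both the occupied spot it encounters and its final parking spot unchanged.
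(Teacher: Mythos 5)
Your proposal is correct and follows essentially the same argument as the paper: an induction on the car index showing $\spot_\beta(i)=\spot_\alpha(i)$ and $\disp_\beta(i)\leq 1$ simultaneously, with the same two-case split on whether $\disp_\alpha(i)\leq 1$ or $\disp_\alpha(i)=2$. Your explicit remark that the occupied sets coincide at each step is exactly the (implicit) justification in the paper's inductive step.
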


\begin{proof}
We show by induction that $\spot_\beta(i)=\spot_\alpha(i)$ and $\disp_\beta(i)\leq1$ for all $i\in[n]$. 
For $i=1$, we have $\disp_\alpha(1)=0$, so $ b_1=a_1=\spot_\beta(1)=\spot_\beta(1)$.  
For the inductive step, suppose that $\spot_\alpha(j)=\spot_\beta(j)$ for all $j<i$.
\begin{itemize}
\item If $\disp_\alpha(i)\leq 1$, then $ b_i=a_i$ and $\spot_\beta(i)=\spot_\alpha(i)$; in particular, $\disp_\beta(i)\leq 1$.
\item If $\disp_\alpha(i)=2$, then $\spot_\alpha(i)=a_i+2$.  In particular, spots $a_i$ and $a_i+1$ are full when car $i$ parks.
Changing the preference of the $i$th car from $a_i$ to $ b_i=a_i+1$ does not affect where car $i$ parks, but then $\disp_\beta(i)=a_i+2- b_i=1$.\qedhere
\end{itemize}
\end{proof}

We now prove our main enumeration result on $2$-interval parking functions.

\begin{theorem}\label{thm:disp-enumerator-for-2-interval}
For all $n\geq 1$, 
\[\sum_{\alpha\in \IPF_n(2)}q^{\disp(\alpha)}t^{\inv(\alpha)}=\sum_{\beta\in \UPF_n} q^{\disp(\beta)}t^{\inv(\beta)}(1+q)^{|\SSS(\beta)|}(1+qt)^{|\RRR(\beta)|}.\]
In particular,
\[|\IPF_n(2)|=\sum_{\beta\in \UPF_n}2^{|\SSS(\beta)|+|\RRR(\beta)|}.\]
\end{theorem}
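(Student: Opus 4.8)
The plan is to enumerate $\IPF_n(2)$ by \emph{counting through} the map $\eta\colon\IPF_n(2)\to\UPF_n$ of \cref{lem:phi2-well-def}: I partition $\IPF_n(2)$ into the fibers $\eta^{-1}(\beta)$ for $\beta\in\UPF_n$, compute the contribution $\sum_{\alpha\in\eta^{-1}(\beta)}q^{\disp(\alpha)}t^{\inv(\alpha)}$ of each fiber, and sum over $\beta$. Since $\spot_\beta=\spot_\alpha$ for every $\alpha\in\eta^{-1}(\beta)$ by \cref{lem:phi2-well-def}, each car parks in the same spot under $\alpha$ as under $\beta$, and $\alpha$ is recovered from $\beta$ by choosing a set $T$ of cars to \emph{demote} by one spot (setting $a_i=b_i-1$ exactly when $i\in T$), leaving the other entries equal to $\beta$.

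First I would show that the admissible demotion sets are precisely the subsets $T\subseteq\RRR(\beta)\cup\SSS(\beta)$. In the forward direction, the definition of $\eta$ gives that $\disp_\alpha(i)=2$ forces $a_i=b_i-1$ and car $i$ to park in spot $b_i+1$, so spots $b_i-1$ and $b_i$ are both occupied when car $i$ arrives; by~\eqref{R-union-S} this means $i\in\RRR(\beta)\cup\SSS(\beta)$. For the converse, a short induction on $i$ (mirroring the proof of \cref{lem:phi2-well-def}) shows that for any $T\subseteq\RRR(\beta)\cup\SSS(\beta)$ the demoted tuple $\alpha^{(T)}$ lies in $\IPF_n(2)$ with $\spot_{\alpha^{(T)}}=\spot_\beta$ and $\eta(\alpha^{(T)})=\beta$: when $i\in T$, the occupancy of spots $b_i-1,b_i$ (inherited from $\beta$ by the inductive hypothesis) guarantees that car $i$ still parks in $\spot_\beta(i)$, now with displacement $2$. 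This gives a bijection $T\mapsto\alpha^{(T)}$ between subsets of $\RRR(\beta)\cup\SSS(\beta)$ and $\eta^{-1}(\beta)$, and the displacement bookkeeping is immediate: each demoted car gains exactly one unit, so $\disp(\alpha^{(T)})=\disp(\beta)+|T|$.

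The main obstacle is the inversion count, where I claim $\inv(\alpha^{(T)})=\inv(\beta)+|T\cap\RRR(\beta)|$. Writing $\alpha^{(T)}_i=b_i-1$ for $i\in T$ and $\alpha^{(T)}_i=b_i$ otherwise, and comparing inversion status pair by pair, a pair $(k,l)$ with $k<l$ changes status only when $b_k-b_l\in\{0,1\}$: an inversion is \emph{gained} exactly when $b_k=b_l$, $k\notin T$, $l\in T$, and \emph{lost} exactly when $b_k=b_l+1$, $k\in T$, $l\notin T$. I then evaluate both contributions using the block structure of $\beta$ (\cref{thm:upf_rearrangement}), recalling that a block of length $s\geq 2$ beginning at value $p$ contributes the values $p,p,p+1,\dots,p+s-2$ (in this increasing order in $\beta$), and that distinct blocks occupy disjoint value ranges separated by a gap. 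For the gain term, equal values occur only as the two copies of a block's starting value $p$; the first copy is the first entry of its block, hence never demotable, so each such pair contributes precisely when the second copy lies in $T$, and the demotable second entries are exactly $\RRR(\beta)$, yielding a gain of $|T\cap\RRR(\beta)|$. For the loss term, I argue it vanishes: if $k\in T\cap\SSS(\beta)$ then every entry equal to $b_k-1$ lies to the \emph{left} of $k$ (it is a smaller value in the same block), while if $k\in T\cap\RRR(\beta)$ then $b_k-1$ is the gap value and does not appear in $\beta$ at all; in either case there is no $l>k$ with $b_l=b_k-1$. Hence the net change is exactly $|T\cap\RRR(\beta)|$.

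With these two formulas in hand, the fiber contribution factors, using $\RRR(\beta)\cap\SSS(\beta)=\emptyset$ from~\eqref{S-intersect-S}:
\[
\sum_{T\subseteq\RRR(\beta)\cup\SSS(\beta)}q^{\disp(\beta)+|T|}\,t^{\inv(\beta)+|T\cap\RRR(\beta)|}
=q^{\disp(\beta)}t^{\inv(\beta)}(1+q)^{|\SSS(\beta)|}(1+qt)^{|\RRR(\beta)|},
\]
since each $i\in\SSS(\beta)$ contributes a factor $1+q$ (displacement only) and each $i\in\RRR(\beta)$ a factor $1+qt$ (displacement and one inversion). Summing over $\beta\in\UPF_n$ yields the displayed generating function, and setting $q=t=1$ gives $|\IPF_n(2)|=\sum_{\beta\in\UPF_n}2^{|\SSS(\beta)|+|\RRR(\beta)|}$.
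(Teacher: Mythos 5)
Your proposal follows the same route as the paper's proof: partition $\IPF_n(2)$ into the fibers of $\eta$, identify each fiber $\eta^{-1}(\beta)$ with the subsets of $\RRR(\beta)\cup\SSS(\beta)$ via demotion, and track how displacement and inversions change. The fiber bijection, the displacement count $\disp(\alpha^{(T)})=\disp(\beta)+|T|$, the case analysis showing a pair can only change inversion status when $b_k-b_l\in\{0,1\}$, and the gain term $|T\cap\RRR(\beta)|$ are all argued correctly, and your final factorization is the paper's.

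However, there is a genuine error in your loss-term argument. You assert that distinct blocks of $\beta$ occupy value ranges ``separated by a gap,'' and conclude that for $k\in T\cap\RRR(\beta)$ the value $b_k-1$ ``does not appear in $\beta$ at all.'' This is false whenever the block preceding $b_k$'s block is a singleton: a singleton block with value $v$ is immediately followed by a block starting at value $v+1$, with no gap (compare the block words $11\mid 3$, which has a gap, and $1\mid 22$, which does not). Concretely, $\beta=(2,1,2)\in\UPF_3$ has block structure $1\mid 22$ and $\RRR(\beta)=\{3\}$; for $k=3$ the value $b_3-1=1$ does appear in $\beta$, at position $2$. So your stated reason does not rule out a lost inversion in this configuration. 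The conclusion---that no inversion of $\beta$ is lost---is nevertheless true, but the correct justification is the defining property of $\RRR(\beta)$ itself: $k\in\RRR(\beta)$ means $b_k$ occurs \emph{after} the last entry of the preceding block $\pi_{j-1}$, and the only entries of $\beta$ with value $b_k-1$ (if any exist) lie in $\pi_{j-1}$, namely its unique entry in the singleton case. Hence every entry of value $b_k-1$ sits to the \emph{left} of position $k$, so no pair $(k,l)$ with $k<l$ and $b_l=b_k-1$ can occur. This is exactly how the paper's proof closes this case: it observes that if $b_i$ is the second element of a block and $b_j$ is the last element of the preceding block with $i<j$, then by definition $i\notin\RRR(\beta)\cup\SSS(\beta)$. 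With this one repair, your argument is complete and coincides with the paper's.
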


\begin{proof}
Let $\beta=(b_1,b_2,\ldots,b_n)\in \UPF_n$ have block structure $\pi_1\mid \pi_2\mid \cdots\mid\pi_m$.  Let $\RRR=\RRR(\beta)$ and $\SSS=\SSS(\beta)$.  For $R\subseteq\RRR$ and $S\subseteq\SSS$, define $\alpha=(a_1,a_2,\ldots,a_n)=g_\beta(R,S)$ by
\[a_i=\begin{cases}
     b_i-1&\text{ if }i\in R\cup S,\\
     b_i&\text{ if }i\not\in R\cup S.
\end{cases}\]
Then $\alpha$ is a parking function by the rearrangement criterion.
In light of the description of $\RRR\cup \SSS$ in~\eqref{R-union-S}, it follows that for all $i\in[n]$, 
\begin{subequations}
\begin{align}
i\in R\cup S & \mbox{ if and only if } \disp_\alpha(i)=2;\label{gclaim:1}\\
i\not\in R\cup S &\mbox{ if and only if }  \disp_\alpha(i)\leq 1;\label{gclaim:2}\\
\spot_\alpha(i)&=\spot_\beta(i).\label{gclaim:3}
\end{align}
\end{subequations}
 Therefore, $\alpha\in\eta^{-1}(\beta)\subseteq\IPF_n(2)$.  That is, we have a function
\[g_\beta\colon2^\RRR\x2^\SSS\to\eta^{-1}(\beta).\]
We claim that $g_\beta$ is a bijection.  
It is injective by definition (note that $\RRR\cap\SSS=\emptyset)$.  To show that $g_\beta$ is surjective, it suffices to show that if $\alpha\in \eta^{-1}(\beta)$ and $\disp_\alpha(i)=2$, then $i\in \RRR\cup \SSS$.  Indeed, $ b_i=a_i+1$ by definition of $\eta$.  Since $\disp_\alpha(i)=2$, when car $i$ parks, spots $a_i= b_i-1$ and $a_i+1= b_i$ are both occupied, and since $\spot_\alpha=\spot_\beta$ by \Cref{lem:phi2-well-def}, we have $i\in \RRR\cup \SSS$ by~\eqref{R-union-S}. 

Note that the inverse map $f_\beta=g_\beta^{-1}$ is given by $f_\beta(\alpha)=(R,S)$, where
  \[R=\RRR\cap \{i\in [n]\suchthat \disp_\alpha(i)=2\},~\mbox{and }
  S=\SSS\cap \{i\in [n]\suchthat \disp_\alpha(i)=2\}.\]
Now, let $\beta\in\UPF_n$, and let $\alpha=g_\beta(R,S)$ for some $R\subseteq\RRR(\beta)$ and $S\subseteq\SSS(\beta)$.  Then

\[\disp_\alpha(i)=\begin{cases} \disp_\beta(i) & \text{ if } i\notin R\cup S,\\ \disp_\beta(i)+1 & \text{ if } i\in R\cup S,\end{cases}\]
so
\begin{equation}\label{stats:disp}
\disp(\alpha)=\disp(\beta)+|R|+|S|.
\end{equation}
  
We now describe $\inv(\alpha)$ in terms of $\inv(\beta)$. Suppose for contradiction that $(i,j)$ is an inversion of $\beta$ but not of $\alpha$. Then, by definition of $g_\beta$, $i\in R\cup S$, $j\not\in R\cup S$, and $ b_i= b_j+1$.  Hence, $a_i =  b_i - 1=   b_j = a_j$. 
Since $\beta$ is a unit interval parking function, it does not have inversions within blocks (\Cref{thm:upf_rearrangement}, part~(2)); hence, $ b_i$ and $ b_j$ must be in different blocks. Thus, $ b_i$ is either the first or second element of some block $\pi_{k+1}$ and $ b_j$ is the last element of  block $\pi_k$. If $ b_i$ is the first element of $\pi_{k+1}$, then $i\notin \RRR\cup \SSS.$  If $ b_i$ is the second element of $\pi_{k+1}$ then again $i\notin \RRR\cup \SSS$ because $ b_i$ occurs before the last entry of $\pi_k$, namely $ b_j.$ Hence, $i\notin \RRR\cup \SSS$, which is a contradiction. We conclude that $\Inv(\beta)\subseteq \Inv(\alpha).$

Now suppose that $(i,j)$ is an inversion of $\alpha$ but not of $\beta$. 
Then $i\not\in R\cup S$, $j\in R\cup S$ and $ b_{i}= b_{j}$ by definition of $g_\beta$. 
Hence,  $j\in R$ because it is the second occurrence of the same number (since $i<j$) and thus is the second entry of its block. Conversely, whenever $ b_i= b_j$ with $i<j$, the pair $(i,j)$ is an inversion of $\alpha$ but not of $\beta$.  Therefore,
\begin{equation}\label{stats:inv}
\inv(\alpha)=\inv(\beta)+|R|.
\end{equation}
and combining~\eqref{stats:disp} and~\eqref{stats:inv} we obtain
\begin{equation} \label{stats}
q^{\disp(\alpha)}t^{\inv(\alpha)}=q^{\disp(\beta)+|R|+|S|}t^{\inv(\beta)+|R|}.
\end{equation}
Consequently,
\begin{align*}
\sum_{\alpha\in \eta^{-1}(\beta)}q^{\disp(\alpha)}t^{\inv(\alpha)}&=\sum_{R\subseteq \RRR}\sum_{S\subseteq \SSS}q^{\disp(\beta)+|R|+|S|}t^{\inv(\beta)+|R|}\\
&=q^{\disp(\beta)}t^{\inv(\beta)}\lp\sum_{R\subseteq \RRR}(qt)^{|R|}\rp \lp\sum_{S\subseteq \SSS}q^{|S|}\rp\\
&=q^{\disp(\beta)}t^{\inv(\beta)}(1+q)^{|\SSS(\beta)|} (1+qt)^{|\RRR(\beta)|},
\end{align*}
and summing over all $\beta\in\UPF_n$ yields the theorem.
\end{proof} 

\begin{example}
Let $\beta=6411624\in\UPF_7$, which has block structure $\beta^\uparrow=112|44|66$.  Positions 4,5,7 contain the second elements of the blocks.  Note that $\RRR(\beta)=\{7\}$, because:
\begin{itemize}
\item $4\notin\RRR(\beta)$ because it is the second element of the \textit{first} block.
\item $5\notin\RRR(\beta)$: the 6 in position 5 is the second element of $\pi_3$, but it occurs to the left of the largest element of $\pi_2$, namely the 4 in position 7.
\item $7\in\RRR(\beta)$:  the 4 in position 7 is the second element of $\pi_2$, and it occurs to the right of the largest element of $\pi_1$, namely the 2 in position 5.
\end{itemize}
Meanwhile, $\SSS(\beta)=\{6\}$.  We have
\[\spot_\beta=6412735,\quad \disp(\beta)=0+0+0+1+1+1+1=4,\quad \inv(\beta)=|\{12,13,14,16,17,23,24,26,56,57\}|=10.\]
The possibilities for $R$ and $S$ and the corresponding $2$-interval parking functions $\alpha=g_\beta(R,S)$ are as follows.
\[
\begin{array}{ccccc}
R&S&\alpha&\spot_\alpha&\disp(\alpha)\\ \hline
\emptyset&\emptyset	&6411624&6412735 & 0+0+0+1+1+1+1 = 4\\
7&\emptyset&6\underline{4}1162\underline{3}&6412735& 0+0+0+1+1+1+2 = 5\\
\emptyset&6&6411614&6412735& 0+0+0+1+1+2+1 = 5\\
7&6&6\underline{4}1161\underline{3}&6412735& 0+0+0+1+1+2+2 = 6
\end{array}\]
In the cases that $R$ is nonempty, the new inversion is underlined.
\end{example}

\section{Enumeration via ciphers} \label{sec:upf-inversions}
 For positive integer $n$ and nonnegative integer $k$, define 
\[\UPF_{n,k}^{\inv}=\{\alpha\in \UPF_n\suchthat \inv(\alpha)=k\},\quad \upf_{n,k}^{\inv}=|\UPF_{n,k}^{\inv}|.\]
Setting $q=1$ in \cref{thm:disp-inv-enumerator-for-upfs} and extracting the coefficient of $t^k$ yields
\begin{equation}\label{upf-inv-asc}
\upf_{n,k}^{\inv} = \sum_{\substack{\sigma \in \Sym_n\\ \inv(\sigma)=k}} 2^{\asc(\sigma)}
\end{equation}
although it is not clear how to convert this expression to a closed formula.  In this section, we describe how work of Avalos and Bly \cite{avalos_sequences_2020}, together with block structure theory (\cref{thm:upf_rearrangement}), can be used to determine the numbers $\upf_{n,k}^{\inv}$ through the use of \defterm{ciphers}, an analogue of Lehmer codes for unit interval parking functions.
We then use ciphers to provide a new proof of \cite[Theorem 1.2]{elder2024parking}, which counts Boolean intervals of $\S_n$ under the right weak order by certain unit interval parking functions.

\subsection{The Avalos--Bly and cipher bijections}
Let $m$ and $n$ be nonnegative integers.  
In what follows, $T=(T_1,T_2,\dots,T_m)$ 
always denotes an ordered list of multisets of nonnegative integers.  
Let $\type(T)=(|T_1|,|T_2|,\dots,|T_m|)$.
Avalos and Bly \cite[Defns.~1.8,~1.10]{avalos_sequences_2020} defined 

\begin{align*}
\FF^m_n &= \{T=(T_1,T_2,\dots,T_m)\suchthat |T_1|+|T_2|+\cdots+|T_m|=n\text{ and } 0\leq x\leq \textstyle\sum_{j=1}^{i-1}|T_j|\text{ for all } x\in T_i\},\\
\FF^m_n(k) &= \{T=(T_1,T_2,\dots,T_m)\in\FF^m_n \suchthat \sum_i\sum_{x\in T_i}x = k\}.
\end{align*}
They proved~\cite[Proposition 1.9]{avalos_sequences_2020} that there is a bijection
\[\varphi\colon[m]^n\to\FF^m_n\]
defined as follows.  Given $w=(w_1,w_2,\dots,w_n)\in[m]^n$ and $j\in[n]$, define
\begin{equation} \label{inv-sigma-j}
\inv(w,j)=|\{\ell\in[j+1,n]\suchthat w_j>w_\ell\}|
\end{equation}
(the number of inversions with $w_j$ as the left element).
 
They then define multisets $T_i = \{\inv(w,j) \suchthat w_j=i\}$
and set $\varphi(w)=(T_1,T_2,\dots,T_m)$.  In particular, since $\inv(w)=\sum_{j=1}^m\inv(w,j)$, it follows that $\varphi$ restricts to a bijection
\[\{w\in[m]^n\suchthat\inv(w)=k\} \to \FF^m_n(k),\]
which is~\cite[Proposition~1.11]{avalos_sequences_2020}.  If we define the
\defterm{content} of $w\in[m]^n$ as $\cont(w)=(|w^{-1}(1)|,\dots,|w^{-1}(m)|)$ (thinking of $w$ as a map $w:[n]\to [m]$), then the construction implies that $\type(T)=\cont(w)$.  Therefore, for each weak composition $c$ of $n$, the map $\varphi$ restricts to bijections
\begin{equation} \label{avalos-bly}
\begin{aligned}
\{w\in[m]^n \suchthat\cont(w)=c\} &\to \{T\in\FF^m_n \suchthat \type(T)=c\},\\
\{w\in[m]^n \suchthat\cont(w)=c,\ \inv(w)=k\} &\to  \{T\in\FF^m_n(k) \suchthat \type(T)=c\}.
\end{aligned}
\end{equation}

Combining the Avalos--Bly bijections~\eqref{avalos-bly} with that of \cref{cor:block-bijection}, we obtain bijections
\begin{align*}
\{\alpha\in\UPF_n\suchthat (|\pi_1|,\dots,|\pi_m|)=c\} &\to \{T\in\FF^m_n \suchthat \type(T)=c\},\\
\{\alpha\in\UPF_{n,k}^{\inv}\suchthat (|\pi_1|,\dots,|\pi_m|)=c\} &\to \{T\in\FF^m_n(k) \suchthat \type(T)=c\},
\end{align*}
for each (strict) composition $c$.  We can now state the main result of this section, which follows from gluing these last bijections together over all compositions $c$.

\begin{theorem}[Cipher Bijection]\label{thm:cipher-bijection}
Let $n$ and $m$ be positive integers and let $k$ be a nonnegative integer. 
Then
\begin{equation} \label{UPF-to-Gmnk}
\psi\colon\UPF_{n,k}^{\inv} \to \bigcup_{m=1}^n \GG^m_n(k),
\end{equation}
where
\[\GG^m_n(k) = \{(T_1,T_2,\dots,T_m)\in\FF^m_n(k)\suchthat |T_i|>0\mbox{ for all } i\}.\]
\end{theorem}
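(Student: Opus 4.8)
The plan is to build $\psi$ as the composite of the two families of bijections already assembled immediately before the statement, and then to verify that they glue into a single bijection as the number of blocks ranges over $[1,n]$. Concretely, for $\alpha\in\UPF_{n,k}^{\inv}$ I would first apply the block bijection of \Cref{cor:block-bijection} to obtain the surjection $s\colon[n]\to[m]$, where $m$ is the number of blocks of $\alpha$, and then set $\psi(\alpha)=\varphi(s)$, the Avalos--Bly cipher of $s$. The one substantive point underlying this definition is that inversions are tracked correctly: by the block-structure argument (\Cref{thm:upf_rearrangement}, exactly as used in \Cref{lem: inv of outcome} and in the proof of \Cref{thm:cyclic-sieving}), replacing each entry of $\alpha$ by the index of its block leaves the inversion set unchanged, so $\inv(\alpha)=\inv(s)$; since $\varphi$ preserves inversion number and content, the cipher $T=\varphi(s)$ satisfies $\sum_i\sum_{x\in T_i}x=\inv(s)=k$ and $\type(T)=\cont(s)$. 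This is precisely what makes each per-composition map land in $\FF^m_n(k)$.

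The heart of the argument is the gluing, which I would organize through two matching partitions. On the target side, the defining condition $|T_i|>0$ for all $i$ says exactly that $\type(T)$ is a \emph{strict} composition of $n$ into $m$ parts, so $\GG^m_n(k)=\bigsqcup_{c}\{T\in\FF^m_n(k)\suchthat\type(T)=c\}$, the union taken over strict compositions $c$ of $n$ into $m$ parts. On the source side, since the blocks of $\alpha$ are all nonempty, their sizes $(|\pi_1|,\dots,|\pi_m|)$ likewise form a strict composition of $n$ into $m$ parts, giving $\{\alpha\in\UPF_{n,k}^{\inv}\suchthat\alpha\text{ has }m\text{ blocks}\}=\bigsqcup_{c}\{\alpha\in\UPF_{n,k}^{\inv}\suchthat(|\pi_1|,\dots,|\pi_m|)=c\}$ over the same index set. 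For each fixed strict composition $c$, the restriction of $\psi$ is exactly the bijection $\{\alpha\in\UPF_{n,k}^{\inv}\suchthat(|\pi_1|,\dots,|\pi_m|)=c\}\to\{T\in\FF^m_n(k)\suchthat\type(T)=c\}$ displayed just before the theorem, so $\psi$ is a bijection on each of these matched slices.

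To finish, I would assemble the slices across all $m$. The sets $\GG^m_n(k)$ for distinct $m$ are disjoint because their elements are tuples of distinct lengths, and the sets $\{\alpha\suchthat\alpha\text{ has }m\text{ blocks}\}$ are disjoint because every $\alpha$ has a unique number of blocks $m\in[1,n]$; hence $\UPF_{n,k}^{\inv}=\bigsqcup_{m=1}^n\{\alpha\suchthat\alpha\text{ has }m\text{ blocks}\}$ maps onto $\bigcup_{m=1}^n\GG^m_n(k)$ slice by slice, and a map that restricts to a bijection between corresponding blocks of two partitions is itself a bijection. The main (and fairly modest) obstacle is purely the bookkeeping that makes these two decompositions line up: I must confirm that the block count $m$ of $\alpha$ equals the length of its cipher and that $\inv(\alpha)=k$ survives both bijections as $\sum_i\sum_{x\in T_i}x=k$. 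Both follow from the content- and inversion-preservation of $\varphi$ together with the block-structure inversion identity noted above, after which no further computation remains.
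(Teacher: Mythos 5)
Your proposal is correct and follows essentially the same route as the paper: the theorem is obtained there exactly by composing the block bijection of \cref{cor:block-bijection} with the Avalos--Bly bijections~\eqref{avalos-bly} and gluing the resulting composition-indexed bijections over all strict compositions $c$ and all block counts $m$. Your only addition is to make explicit the inversion-preservation step $\inv(\alpha)=\inv(s)$, which the paper leaves implicit in the phrase ``combining the Avalos--Bly bijections with that of \cref{cor:block-bijection}'' but justifies by the same block-structure observation you cite (as in the proof of \cref{thm:cyclic-sieving}).
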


We call $\psi(\alpha)$ the \defterm{cipher} of $\alpha$.  We  represent ciphers by writing each $T_i$ as a string of boldface numbers in weakly decreasing order, separated by bars.  For instance,
$\pmb{00\big|110\big|31\big|1}$ is an element of $\GG_8^4(7)$.

\begin{example}[One inversion]
Let $n\geq2$.  We calculate $\upf_{n,1}^{\inv}$.
By~\eqref{UPF-to-Gmnk}, we have a bijection
\[
\UPF_{n,1}^{\inv}\to\bigcup_{m=1}^n \GG^m_n(1).
\]
The right-hand side consists of sequences $(T_1,T_2,\dots,T_m)$ where each $T_i$ is a nonempty multiset of 0's and 1's, containing a total of $n-1$ 0's and one $1$, with the 1 not appearing in $T_1$ (so in particular $m\geq2$).
The ciphers for such a sequence are precisely those words that start with $\pmb{0}$, followed by some permutation of the following substrings: one copy of $\pmb{\big|1}$, and $n-2$ more strings each of which can be either $\pmb{0}$ or $\pmb{\big|0}$.
(The number of $\pmb{\big|0}$'s used is thus $m-1$.)
The ``regular expression'' for all such ciphers is
\[\pmb{0} \cdot \big( \{\pmb{\big|1}\}^1,\ \{\pmb{0},\ \pmb{\big|0}\}^{n-2} \big) \]
(we use this notation in the next example as well) and it is now elementary that
\begin{equation} \label{upf:n1}
\upf_{n,1}^{\inv}=2^{n-2}(n-1).
\end{equation}
\end{example}

\begin{example}[Two inversions]
Let $n\geq3$.  We calculate $\upf_{n,2}^{\inv}$.  Here are the possibilities:
\begin{enumerate}
\item The cipher has one $\pmb{2}$ and $n-1$ $\pmb{0}$'s.  The regular expression is
$\big(\pmb{00},\pmb{0|0}\big)\cdot\big(\{\pmb{\big|2}\}^1,\ \{\pmb{\big|0},\ \pmb{0}\}^{n-3} \big)$.  There are $2^{n-2}$ such words.
\item The cipher has two $\pmb{1}$'s and $n-2$ $\pmb{0}$'s.  There are two subcases:
\begin{enumerate}
\item The $\pmb{1}$'s are in different blocks.
\item[] Regular expression: $\pmb{0}\cdot\big(\{\pmb{\big|1}\}^2,\ \{\pmb{\big|0},\ \pmb{0}\}^{n-3} \big)$
\item[] Count: $2^{n-3}\binom{n-1}{2}$
\item The $\pmb{1}$'s are in the same block.
\item[] Regular expression: $\pmb{0}\cdot\big(\{\pmb{\big|11}\}^1,\ \{\pmb{\big|0},\ \pmb{0}\}^{n-3} \big)$
\item[] Count: $2^{n-3}(n-2)$
\end{enumerate}
\end{enumerate}
Adding these counts, we obtain
\begin{equation} \label{upf:n2}
\upf_{n,2}^{\inv} = 2^{n-3}\left(3(n-2)+\binom{n-1}{2}\right)=2^{n-4}(n-2)(n+5).
\end{equation}
\end{example}

\begin{example}[Three inversions] Let $n\geq 3$.
We calculate $\upf_{n,3}^{\inv}$.  The regular expressions and their counts are as follows:
\[
\begin{array}{ll}
\pmb{0} \cdot
\big(\{\pmb{\big|0},\pmb{0}\}\big)^2 \cdot
\big(\{\pmb{\big|3}\}^1,\ \{\pmb{\big|0},\ \pmb{0}\}^{n-4}\big)
    & 2^2 \cdot 2^{n-4} \cdot (n-3)
\\ [4pt]
\{\pmb{00},\pmb{0\big|0}\}^1 \cdot
\big(\{\pmb{\big|21}\}^1,\ \{\pmb{\big|0},\ \pmb{0}\}^{n-4}\big)
    & 2\cdot2^{n-4} \cdot (n-3)
\\ [4pt]
\pmb{0\big|1} \cdot
\big(\{\pmb{\big|2}\}^1,\ \{\pmb{\big|0},\ \pmb{0}\}^{n-3}\big)
    & 2^{n-3} \cdot (n-2)
\\ [4pt]
\{\pmb{00},\pmb{0\big|0}\}^1 \cdot
\big(\{\pmb{\big|2}\}^1,\ \{\pmb{\big|1}\}^1,\ \{\pmb{\big|0},\ \pmb{0}\}^{n-4}\big)
    & 2\cdot 2^{n-4} \cdot (n-3)(n-2)
\\ [4pt]
\pmb{0} \cdot
\big(\{\pmb{\big|1}\}^3,\ \{\pmb{\big|0},\ \pmb{0}\}^{n-4}\big)
    & 2^{n-4} \cdot \binom{n-1}{3}
\\ [4pt]
\pmb{0} \cdot
\big(\{\pmb{\big|11}\}^1,\ \{\pmb{\big|1}\}^1,\ \{\pmb{\big|0},\ \pmb{0}\}^{n-4}\big)
    & 2^{n-4} \cdot (n-3)(n-2)
\\ [4pt]
\pmb{0} \cdot
\big(\{\pmb{\big|111}\}^1,\ \{\pmb{\big|0},\ \pmb{0}\}^{n-4}\big)
    & 2^{n-4} \cdot (n-3)\\ [4pt]
\end{array}
\]
Adding up the counts, we obtain
\begin{equation} \label{upf:n3}
\upf_{n,3}^{\inv}=2^{n-4}\lp \frac{1}{6}n^3 + 2n^2-\frac{25}{6}n-8\rp.
\end{equation}
\end{example}

Comparing the formulas~\eqref{upf:n1}, \eqref{upf:n2}, and \eqref{upf:n3}, we are led to the following general conjecture, which we leave as an open problem.

\begin{conjecture} \label{conj:count-upfnk}
Let $n\geq 1$ and $k\in \{0,1,\dots,n(n-1)/2\}$.  Then
$\upf_{n,k}^{\inv}=O(2^{n-2k} n^k)$.
\end{conjecture}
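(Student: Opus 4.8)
The plan is to pass through the Cipher Bijection (\Cref{thm:cipher-bijection}), which identifies $\UPF_{n,k}^{\inv}$ with $\bigcup_{m=1}^n\GG^m_n(k)$, and then to estimate the number of ciphers of weight $k$ by grouping them according to their underlying composition. Throughout I treat $k$ as fixed and study the asymptotics as $n\to\infty$, writing $p(j)$ for the number of integer partitions of $j$.

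First I would record an exact count. A cipher in $\GG^m_n(k)$ is a tuple $(T_1,\dots,T_m)$ of nonempty multisets with $|T_i|=c_i$, where $(c_1,\dots,c_m)\vDash n$, such that every element of $T_i$ lies in $\{0,1,\dots,s_i-1\}$ with $s_i=1+c_1+\cdots+c_{i-1}$, and $\sum_i\sum_{x\in T_i}x=k$. Setting $w_i=\sum_{x\in T_i}x$ and letting $P(w;c,b)$ be the number of partitions of $w$ into at most $c$ parts, each at most $b$, summing over all compositions gives
\[
\upf_{n,k}^{\inv}=\sum_{(c_1,\dots,c_m)\vDash n}\;\sum_{\substack{w_1+\cdots+w_m=k\\ w_i\ge 0}}\;\prod_{i=1}^m P(w_i;c_i,s_i-1),
\]
where $s_1=1$ forces $w_1=0$, so only the $m-1$ later blocks can carry weight.

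Second, I would bound the inner sum crudely. Since $P(w;c,b)\le p(w)\le p(k)$ and at most $k$ of the $w_i$ are positive, each product is at most $p(k)^k$, while the number of weak compositions of $k$ into the $m-1$ eligible blocks is $\binom{m+k-2}{k}$. Hence
\[
\upf_{n,k}^{\inv}\le p(k)^k\sum_{m=1}^n\binom{n-1}{m-1}\binom{m+k-2}{k}.
\]
Expanding $\binom{m+k-2}{k}=\sum_{i=1}^k\binom{k-1}{i-1}\binom{m-1}{i}$ (Vandermonde) and using $\sum_{m}\binom{n-1}{m-1}\binom{m-1}{i}=\binom{n-1}{i}2^{n-1-i}$ yields
\[
\upf_{n,k}^{\inv}\le p(k)^k\sum_{i=1}^k\binom{k-1}{i-1}\binom{n-1}{i}2^{n-1-i}.
\]
The $i=k$ term dominates, giving $\binom{n-1}{k}2^{n-1-k}\sim\tfrac{1}{k!}\,2^{n-1-k}n^k$, so the entire sum is $O(2^{n-1-k}n^k)$. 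As $k$ is fixed, $2^{n-1-k}=\Theta(2^{n-2k})$, and therefore $\upf_{n,k}^{\inv}=O(2^{n-2k}n^k)$.

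To see that the normalization $2^{n-2k}n^k$ is the natural one (rather than merely $2^{n}n^k$), I would isolate the configurations achieving the leading order: choose $k$ of the $m-1$ eligible blocks and place a single $\pmb 1$ in each. These contribute exactly $\sum_m\binom{n-1}{m-1}\binom{m-1}{k}=\binom{n-1}{k}2^{n-1-k}\sim\tfrac{2^{k-1}}{k!}\,2^{n-2k}n^k$, whose coefficients $1,1,\tfrac23$ for $k=1,2,3$ match the leading terms of \eqref{upf:n1}, \eqref{upf:n2}, and \eqref{upf:n3}; the remaining configurations (a block of weight $\ge 2$, or fewer than $k$ nonzero blocks) are of lower order in $n$. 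The main obstacle is interpretational rather than computational: if \Cref{conj:count-upfnk} is meant for fixed $k$ as $n\to\infty$, the estimate above settles it, since the exact power of $2$ in the exponent is absorbed by the constant. If instead it is intended uniformly in $k$, then the crude factor $p(k)^k$ is far too lossy, and one must instead control the subleading terms together with the part-size cutoffs $s_i-1$ (which genuinely suppress large parts once $k\gtrsim n$) uniformly in both variables; establishing such a uniform bound is the delicate part.
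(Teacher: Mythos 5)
The first thing to say is that the paper contains \emph{no} proof of this statement: it is \Cref{conj:count-upfnk}, which the authors introduce with ``we are led to the following general conjecture, which we leave as an open problem.'' So there is no paper proof to compare against; your attempt must stand on its own, and judged that way it proves a genuine partial result, but not the conjecture as stated.

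What you prove is correct for fixed $k$: the exact cipher-counting formula is right, the identities $\binom{m+k-2}{k}=\sum_{i=1}^{k}\binom{k-1}{i-1}\binom{m-1}{i}$ and $\sum_{m}\binom{n-1}{m-1}\binom{m-1}{i}=\binom{n-1}{i}2^{n-1-i}$ check out, and the resulting bound $\upf_{n,k}^{\inv}\le p(k)^k\sum_{i=1}^{k}\binom{k-1}{i-1}\binom{n-1}{i}2^{n-1-i}$ does give $\upf_{n,k}^{\inv}=O_k\bigl(2^{n-k}n^k\bigr)$, hence $O_k\bigl(2^{n-2k}n^k\bigr)$ when $k$ is held fixed; your identification of the dominant ciphers ($k$ singleton $\pmb{1}$'s in distinct blocks, with leading constant $2^{k-1}/k!$ matching \eqref{upf:n1}, \eqref{upf:n2}, \eqref{upf:n3}) is a worthwhile refinement beyond anything in the paper. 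The gap is the interpretational issue you yourself flag, and it is not minor: the conjecture quantifies $k$ over $\{0,1,\dots,n(n-1)/2\}$, a range that depends on $n$ and would be vacuous under a fixed-$k$ reading; moreover, for fixed $k$ the exponent $n-2k$ carries no information at all, since $O(2^{n-2k}n^k)=O(2^{n-ck}n^k)$ for every constant $c$. The statement has content only if the implied constant is uniform in $k$ (the leading constants $1,1,\tfrac23$ in the paper's data suggest the authors may even intend constant $1$), and uniformly your argument breaks in exactly the places you note: the factor $p(k)^k$ is superexponential in $k$, the substitution $2^{n-1-k}=\Theta(2^{n-2k})$ discards an unbounded factor $2^{k-1}$, and the ``$i=k$ term dominates'' step is no longer valid once $k$ grows with $n$. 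So this is a correct, honestly labeled fixed-$k$ theorem together with a plausible heuristic for the constant, but the conjecture itself --- which lives in the uniform regime, where one must exploit the part-size cutoffs $s_i-1$ and control all constants in both variables --- remains open.
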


\subsection{Ciphers and Lehmer codes}
We describe another way to construct ciphers by way of Lehmer codes for permutations. Let $E_{n,k}$ be the set of words $w\in \{0,1,\dots,n-1\}^n$  satisfying 
\begin{enumerate}
    \item $w_i\leq i-1$\quad \text{for all $i\in [n]$, and }
    \item $\sum_{i=1}^n w_i=k$.
\end{enumerate}
Then a cipher $T\in \mathcal{G}_n(k)$ can be constructed from a word $w\in E_{n,k}$ by inserting bars.  If $w_i<w_{i+1}$, we \textit{must} insert a bar between $w_i$ and $w_{i+1}$; otherwise, we \textit{may} insert a bar.
Hence, we can construct $2^{n-1-\asc(w)}$ ciphers from $w$, and every cipher arises in this way. It follows that
\begin{equation} \label{upfnk-from-words}
\upf_{n,k}^{\inv}=\sum_{w\in E_{n,k}}2^{n-1-\asc(w)}.
\end{equation}

Define 
\[E_n=\bigcup_{k\geq 0} E_{n,k} = \{0\}\times \{0,1\}\times\cdots \times \{0,1,\dots,n-1\}.\]
Then $E_n$ is the set of \defterm{Lehmer codes} (backwards); see \cite[p.~34]{stanley2012enumerative}. There is a bijection $\gamma:E_n\to \S_n$ from Lehmer codes to permutations: given $w\in E_n$, construct $\gamma(w)$ by starting with the empty word and inserting $1,2,\dots,n$ successively, placing $i$ to the left of $w_i$ letters. It is not hard to see that for all $w\in E_n$,
\begin{align} \label{lehmer:1}
\Asc(w)&=\Des(\gamma(w)^{-1})
\intertext{and}\label{lehmer:2}
\sum_{i=1}^n w_i&=\inv(\gamma(w)).
\end{align}

Applying~\eqref{lehmer:1} to~\eqref{upfnk-from-words} and observing that  $\asc(\sigma)+\des(\sigma)=n-1$ for all $\sigma\in \S_n$, we obtain
\[
\upf_{n,k}^{\inv}=\sum_{\substack{\sigma\in \S_n\\\inv(\sigma)=k}}2^{n-\des(\sigma^{-1})-1}=\sum_{\substack{\sigma\in \S_n\\\inv(\sigma)=k}}2^{\asc(\sigma)},
\]
which recovers the $q=1$ specialization of \cref{thm:disp-inv-enumerator-for-upfs}. 

Define the \defterm{underlying code} of a cipher $T\in\mathcal{G}_n(k)$ to be the word $\code(T)\in E_n$ obtained from~$T$ by removing all bars.  Likewise, the underlying code of a UPF $\alpha$ is $\code(\alpha)=\code(\psi(\alpha))$, where $\psi$ is the Avalos--Bly bijection.

\begin{corollary}
    For a unit interval parking function $\alpha$, the Lehmer code of $\spot_\alpha$ is the underlying code of $\alpha$. In particular, if $\beta$ is a unit interval parking function, then $\alpha$ and $\beta$ have the same outcome permutation if and only if $\psi(\alpha)$ and $\psi(\beta)$ have the same underlying code.
\end{corollary}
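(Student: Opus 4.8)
The plan is to connect the cipher construction directly to the Lehmer code machinery developed in this subsection. Recall that the cipher $\psi(\alpha)$ of a UPF $\alpha$ is built from the Avalos--Bly bijection $\varphi$ applied to the surjection $s\colon[n]\to[m]$ given by \Cref{cor:block-bijection}, so that $\code(\alpha)=\code(\psi(\alpha))$ is obtained by deleting the bars. Explicitly, by the definition of $\varphi$, the $i$th entry of $\code(\alpha)$ records $\inv(s,i)=|\{j>i\suchthat s(j)<s(i)\}|$, the number of later cars belonging to earlier blocks than the block of car~$i$. The claim is that this agrees with the Lehmer code of $\spot_\alpha$, whose $i$th entry by definition counts the inversions $(i,j)$ of $\spot_\alpha$ with $i$ as the left element, i.e.\ $|\{j>i\suchthat \spot_\alpha(j)<\spot_\alpha(i)\}|$.

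\emph{The key step} is to show these two quantities coincide for each fixed $i$. By \Cref{lem: inv of outcome}, $\Inv(\alpha)=\Inv(\spot_\alpha)$, so it suffices to argue that $\Inv(\alpha)$ is recorded correctly by the block-based count $\inv(s,i)$. First I would observe that, by part~(2) of \Cref{thm:upf_rearrangement}, $\alpha$ has no inversions within a block and the entries of each block increase left-to-right in $\alpha$; moreover entries in earlier blocks are strictly smaller than entries in later blocks. Consequently, for a pair $i<j$, we have $(i,j)\in\Inv(\alpha)$ exactly when $a_i>a_j$, which happens exactly when $s(i)>s(j)$ (car $i$ lies in a strictly later block than car $j$). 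Therefore $\inv(s,i)=|\{j>i\suchthat s(j)<s(i)\}|$ equals $|\{j>i\suchthat (i,j)\in\Inv(\alpha)\}|$, and combining with \Cref{lem: inv of outcome} this equals the $i$th Lehmer-code entry of $\spot_\alpha$. Running over all $i$ gives $\code(\alpha)=\gamma^{-1}(\spot_\alpha)$, proving the first assertion.

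For the second assertion, I would argue that two UPFs have the same car permutation $\car_\alpha=\spot_\alpha^{-1}$ if and only if they have the same spot permutation (since inversion is a bijection on $\S_n$), which by the first part happens if and only if they have the same underlying code. The only subtlety is the slight terminological mismatch: the statement says ``outcome permutation'' whereas the body of the paper uses ``car permutation'' and ``spot permutation''; since $\car_\alpha$ and $\spot_\alpha$ determine each other, this causes no difficulty.

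\emph{The main obstacle} I anticipate is purely bookkeeping: carefully disentangling the three layers of indexing (the surjection $s$ of \Cref{cor:block-bijection}, the inversion statistic $\inv(w,j)$ from \eqref{inv-sigma-j}, and the Lehmer-code convention of $\gamma$, which inserts values right-to-left) to confirm that the conventions genuinely line up rather than differing by a reversal or a transpose. In particular I would double-check the direction convention in $\gamma$—inserting $i$ to the left of $w_i$ letters—against the ``number of larger later entries'' versus ``number of smaller later entries'' counts, using \eqref{lehmer:1} and \eqref{lehmer:2} as consistency checks, and verify the correspondence on the running example ($\alpha$ with block structure as in \Cref{ex: block structure}) before asserting it in general.
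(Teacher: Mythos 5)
There is a genuine gap, and it sits exactly at the point you deferred as ``bookkeeping.'' Both of your indexing claims are false under the paper's conventions, and reconciling them is the real content of the statement. First, the $i$th entry of $\code(\alpha)$ is \emph{not} $\inv(s,i)$: the cipher groups the numbers $\inv(s,j)$ into the multisets $T_1,\dots,T_m$ according to the block of car $j$ and writes each $T_i$ in weakly decreasing order, so removing bars produces a block-grouped rearrangement of $(\inv(s,1),\dots,\inv(s,n))$, not that sequence itself. For $\alpha=13631674$ of \Cref{exa:cipher-from-w} one has $(\inv(s,i))_{i}=(0,1,3,1,0,1,1,0)$, while $\code(\alpha)=00110311$. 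Second, the paper's Lehmer code is not the position-based count you use: under $\gamma$ (insert the value $i$ to the left of $w_i$ letters), the entry $w_i$ is the number of values \emph{smaller than $i$ lying to the right of the value $i$} in the permutation---it is indexed by values, not positions (consistent with the constraint $w_i\le i-1$ defining $E_n$; your count would instead satisfy $w_i\le n-i$). For $\spot_\alpha=13642785$ the position-based code is $(0,1,3,1,0,1,1,0)$, whereas $\gamma^{-1}(\spot_\alpha)=00110311$.

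What you do prove correctly---that $\inv(s,j)=|\{j'>j\suchthat \spot_\alpha(j')<\spot_\alpha(j)\}|$, via \Cref{lem: inv of outcome} and the fact that inversions of a UPF occur only across blocks---is a position-indexed identity, while the corollary is a spot-indexed one. The two mismatches are parallel (both sides of your identity are the reindexings by $\car_\alpha$ of the two sides of the corollary), so your computation can be salvaged, but you must supply the missing reindexing step: the $r$th entry of $\code(\alpha)$ equals $\inv(s,j)$ for the car $j$ that parks in spot $r$. This holds because, within a block, $\inv(s,j)$ is weakly decreasing as the position $j$ increases (so the decreasing sort of $T_i$ lists that block's cars in arrival order), and by block structure the cars of block $i$ fill that block's contiguous interval of spots in arrival order; neither fact appears in your write-up, and without them the proof does not reach the stated conclusion. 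The paper sidesteps this issue by a different argument: it shows $\Inv(\gamma(w))\subseteq\Inv(\alpha)$ for $w=\code(\psi(\alpha))$ directly from the insertion description of $\gamma$, upgrades this to equality by comparing cardinalities ($\sum_i w_i=\inv(\alpha)$), and then concludes $\gamma(w)=\spot_\alpha$ because a permutation is determined by its inversion set.
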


\begin{proof}
Let $\pi_1\mid \pi_2\mid\cdots\mid\pi_m$ be the block structure of $\alpha=(a_1,a_2,\ldots,a_n)$. As in the Avalos--Bly construction described above, for each $i\in[m]$, let $T_i$ denote the multiset of all numbers $\inv(\alpha,j)$ (see~\eqref{inv-sigma-j}) such that $a_j\in\pi_i$.
Then $w=\code(\psi(\alpha))$ is the concatenation $T_1T_2\cdots T_m$, where each $T_i$ is sorted in weakly decreasing order.  Let $\sigma=\gamma(w)$.  Then $(i,j)\in \Inv(\gamma(w))$ if and only if $\sigma_i>\sigma_j$ and $w_{\sigma_i}>w_{\sigma_j}+w_{\sigma_j+1}+\cdots+w_{\sigma_i-1}$; in this case $a_i$ must belong to a later block than $a_{j}$, so $(i,j)\in \Inv(\alpha)$. Thus $\Inv(\gamma(w))\subseteq\Inv(\alpha)$, and since
    \[|\Inv(\gamma(w))|=\sum_{i=1}^n w_i=\sum_{i=1}^n \psi(\alpha)_i = |\Inv(\alpha)|\]
(by the constructions of $\gamma$, $w$ and $\psi$, respectively), we  conclude that $\Inv(\gamma(w))=\Inv(\alpha)$.
    
On the other hand, $\Inv(\spot_\alpha)=\Inv(\alpha)$. A permutation is uniquely determined by its inversion set, so $\spot_\alpha=\sigma=\gamma(w)$. Hence, $\gamma^{-1}(\spot_\alpha)$ must be the underlying code of $\psi(\alpha)$.
\end{proof}

\begin{example} \label{exa:cipher-from-w}
Let $w=00110311\in E_{8,7}$.  The corresponding ciphers are of the form $0\cdot0|1\cdot1\cdot0|3\cdot1\cdot1$, where each dot is replaced either with $|$ or with the null symbol. The unit interval parking function corresponding to any of these 32 ciphers has 
$\sigma=\spot_\alpha=13642785$, e.g.,
\[\begin{array}{ccccc}
00|110|311 &\xmapsto{\psi^{-1}}&  13631674 &\xmapsto{\Out}& 13642785,\\
00|1|10|3|11 &\xmapsto{\psi^{-1}}& 13641774 &\xmapsto{\Out}& 13642785.
\end{array}\]
Then $\sigma^{-1}=15248367=\gamma(w)^{-1}$.  Note that $\Des(\sigma^{-1})=\{2,5\}=\Asc(w)$.
\end{example}

\subsection{Unit Fubini rankings}

A \defterm{unit Fubini ranking} is a unit interval parking function whose blocks all have size 1 or 2. Unit Fubini rankings were introduced in \cite{elder2024parking} to study Boolean intervals of $\S_n$ under the right weak order. We can use ciphers to provide a new proof of the main result of that paper.

\begin{theorem}[{\cite[Theorem 1.2]{elder2024parking}}]\label{count-unit-Fubini}
Unit Fubini rankings with $n-r$ blocks and $k$ inversions are in bijection with rank-$r$ Boolean intervals $[\sigma,\tau]$ of $\S_n$ under the right weak order with $\inv(\sigma)=k$.
\end{theorem}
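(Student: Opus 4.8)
The plan is to encode a unit Fubini ranking by its car permutation together with the positions of its size-$2$ blocks, and to match this data against the standard description of Boolean intervals in the right weak order. For $p\in[n-1]$, let $s_p$ act on the right by interchanging the entries in positions $p$ and $p+1$; in the right weak order one has the cover $\sigma\lessdot\sigma s_p$ precisely when $p$ is an ascent of $\sigma$. A rank-$r$ interval $[\sigma,\tau]$ is Boolean if and only if $\tau=\sigma\prod_{p\in S}s_p$ for a set $S$ of $r$ \emph{pairwise non-adjacent} ascents of $\sigma$: non-adjacency makes the $s_p$ commute, so every subproduct is reduced and the interval is the Boolean lattice on the subsets of $S$, whereas two adjacent ascents fail to commute and produce a hexagonal ($\S_3$) interval rather than a Boolean one. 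Hence rank-$r$ Boolean intervals $[\sigma,\tau]$ with $\inv(\sigma)=k$ correspond bijectively to pairs $(\sigma,S)$ in which $\inv(\sigma)=k$ and $S\subseteq[n-1]$ is a set of $r$ pairwise non-adjacent ascents of $\sigma$ (with $\sigma$ the minimum of the interval and $S$ recording its atoms).

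For the forward map, let $\alpha$ be a unit Fubini ranking with $n-r$ blocks; then exactly $r$ blocks have size~$2$ and $n-2r$ have size~$1$. Set $\sigma=\car_\alpha$, so that $\inv(\sigma)=\inv(\alpha)=k$ by \Cref{lem: inv of outcome}. A size-$2$ block occupies two consecutive spots $\{p,p+1\}$, and by \Cref{thm:upf_rearrangement}(2) its two cars enter $\alpha$ in increasing order and thus park left-to-right; hence $\car_\alpha(p)<\car_\alpha(p+1)$, so $p$ is an ascent of $\sigma$. Let $S(\alpha)$ be the set of these starting spots. Distinct size-$2$ blocks occupy disjoint pairs of consecutive spots, so the elements of $S(\alpha)$ differ by at least~$2$; thus $S(\alpha)$ is a set of $r$ pairwise non-adjacent ascents of $\sigma$, and $\alpha\mapsto(\car_\alpha,S(\alpha))$ is well defined.

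Conversely, given such a pair $(\sigma,S)$, the non-adjacency of $S$ lets the pairs $\{p,p+1\}$ (for $p\in S$) together with the remaining singletons tile $[n]$, determining a block word $\beta$ whose size-$2$ blocks sit exactly at the spots indexed by $S$. Declaring the car in spot $i$ to be $\sigma(i)$ then specifies a shuffle of the blocks of $\beta$ in which the two cars of each size-$2$ block $\{p,p+1\}$ appear in increasing order, using exactly that $\sigma(p)<\sigma(p+1)$, i.e.\ $p\in\Asc(\sigma)$. By \Cref{thm:upf_rearrangement}(3) this shuffle is a unit interval parking function, and by construction it is a unit Fubini ranking with car permutation $\sigma$ and size-$2$ blocks indexed by $S$; it is the unique preimage of $(\sigma,S)$ under the map of the previous paragraph. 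Composing the resulting bijection on pairs $(\sigma,S)$ with the parametrization of Boolean intervals yields the theorem.

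I expect this last reconstruction to be the crux: one must verify that the block word $\beta$ built from $S$ is consistent and that the prescribed car-in-spot assignment genuinely is a shuffle of its blocks realizing $\car_\alpha=\sigma$. The non-adjacency of $S$ is precisely the hypothesis that prevents two size-$2$ blocks from sharing a spot (so $\beta$ is well defined) and that confines the overflow from each size-$2$ block to a single spot, so no car is displaced by more than one; it is the very same condition that, on the permutation side, separates a Boolean interval from a larger weak-order interval.
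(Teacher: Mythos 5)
Your proof is correct, but it takes a genuinely different route from the paper's. You parametrize unit Fubini rankings directly by the pair $(\car_\alpha,\,S(\alpha))$, where $S(\alpha)$ records the starting spots of the size-$2$ blocks, using \Cref{thm:upf_rearrangement} to show $S(\alpha)$ is a sparse subset of $\Asc(\car_\alpha)$ and to reconstruct $\alpha$ from any such pair. The paper instead runs the statement through its cipher machinery (\Cref{thm:cipher-bijection}): a unit Fubini ranking becomes a cipher with all blocks of size $1$ or $2$, i.e.\ a Lehmer-code word $w\in E_{n,k}$ with mandatory bars at ascents plus a set $B$ of optional bars, and the pair $(w,B)$ is traded for $(\sigma,C)$ with $\sigma=\gamma(w)^{-1}$ and $C=\Asc(\sigma)\setminus B$ sparse. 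Both arguments terminate in the same standard description of rank-$r$ Boolean intervals in right weak order as a bottom element $\sigma$ together with a sparse $r$-subset of $\Asc(\sigma)$, which you and the paper justify at about the same level of detail. In fact the two bijections coincide as maps: by the paper's corollary on underlying codes, $\gamma(w)=\spot_\alpha$, so the paper's $\sigma$ is your $\car_\alpha$, and its sparse set $C$ marks exactly the positions where consecutive cipher entries share a block, i.e.\ your $S(\alpha)$. What each approach buys: yours is self-contained given \Cref{thm:upf_rearrangement} and \Cref{lem: inv of outcome} (note that passing from $\inv(\spot_\alpha)$ to $\inv(\car_\alpha)$ also uses $\inv(\tau)=\inv(\tau^{-1})$), and it exhibits the bijection concretely at the level of parking functions; the paper's version leans on machinery it has already built, gets the inversion statistic for free from the cipher weight $\sum_i w_i$, and serves that section's stated purpose of showcasing ciphers as a tool. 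The one step in your write-up deserving an extra sentence is the claim $\car_\alpha=\sigma$ in the reconstruction: it relies on the block-structure fact that the cars of each block occupy precisely that block's spots, left to right in order of entry, which for singletons is automatic and for size-$2$ blocks is exactly your ascent condition $\sigma(p)<\sigma(p+1)$.
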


This statement is ostensibly a refinement of~\cite[Theorem~1.2]{elder2024parking}, which does not mention inversions explicitly, but it is not difficult to use the methods of~\cite{elder2024parking} together with elementary facts such as \cref{lem: inv of outcome} to obtain the more refined version.

\begin{proof}
Call a set of integers \defterm{sparse} if it contains no two consecutive integers.
Let $\UFR_{n,k}^m$ be the set of unit Fubini rankings of length $n$ with $m$ blocks and $k$ inversions.  The bijection \eqref{UPF-to-Gmnk}
restricts to a bijection
\[\psi\colon\UFR_{n,k}^m \to \hat\FF^m_n(k)\coloneqq\{(T_1,T_2,\dots,T_m)\in\FF^m_n(k)\suchthat |T_i|\in\{1,2\}\ \ \mbox{for all }i\}.\]
As in \cref{exa:cipher-from-w}, the elements of $\FF^m_n(k)$ can be represented as ciphers as follows: choose a word $w\in E_{n,k}$, insert bars at all its ascents, and insert additional bars as needed so as to obtain $m$ blocks, all of size 1 or 2.  That means that we need a total of $m-1$ bars; if $B$ is the set of non-ascents where there are bars, then $|B|=m-1-|\Asc(w)|$.  Thus we have a bijection
\[\hat\FF^m_n(k) \to \left\{(w,B)\suchthat
\begin{array}{l}
w\in E_{n,k},\\
B\subseteq [n-1]\sm\Asc(w),\\
|B|=m-1-|\Asc(w)|, \mbox{ and}\\
{[n-1]}\sm(\Asc(w)\cup B) \text{ is sparse}
\end{array}\right\}.\]
We may replace $(w,B)$ with the equivalent data $(\sigma,C)$, where $\sigma=\gamma(w)^{-1}$ and $C=\Asc(\sigma)\sm B$.
(By~\eqref{lehmer:1}, we have $[n-1]\sm\Asc(w)=[n-1]\sm\Des(\sigma)=\Asc(\sigma)$.)  Translating the conditions on $(w,B)$ into conditions on $(\sigma,C)$, we obtain a bijection
\[\hat\FF^m_n(k) \to \left\{(\sigma,C)\suchthat
\begin{array}{l}
\sigma\in\Sym_n, \ \inv(\sigma)=k,\\
C\subseteq\Asc(\sigma), \ |C|=n-m,\mbox{ and }\\
C \text{ is sparse}
\end{array}\right\}.\]
On the other hand, a Boolean interval in the right weak order with bottom element $\sigma$ and rank $r$ is given precisely by choosing $r$ pairwise commuting elementary transpositions $s_i$ such that $\inv(\sigma s_i)>\inv(\sigma)$ for every $i$, which is equivalent to choosing a sparse subset of $\Asc(\sigma)$ of size $r$.
\end{proof}

 \begin{remark}\label{remark: fib-count}
Let $\Fib_i$ denote the $i$th Fibonacci number: $(\Fib_0,\Fib_1,\Fib_2,\Fib_3,\dots)=(1,1,2,3,\dots)$.
Theorem~1.1 of~\cite{elder2024parking} states that for each $\sigma\in\Sym_n$, the number of Boolean intervals in right weak order with bottom element $\sigma$ is
$\prod_{i=1}^k \Fib_{a_i(\sigma)}$,
where $a_1(\sigma),\dots,a_k(\sigma)$ are the lengths of the maximal ascending subwords of $\sigma=(\sigma_1,\dots,\sigma_n)$, or, equivalently, of the maximal non-ascending subwords of $w=\gamma^{-1}(\sigma^{-1})$.  The proof of Theorem~\ref{count-unit-Fubini} provides a generating function for Boolean intervals by rank, as follows.  Define the \defterm{Fibonacci polynomials} $\Fib_n(q)$ by
\[\Fib_n(q)=\sum_{\substack{S\subseteq [n-1]\\\text{$S$ sparse}}}q^{|S|}\]
(see sequence~\#\seqnum{A011973} in \cite{OEIS}), so that $\Fib_n(1)=\Fib_n$.  Then
\begin{equation}
\sum_{\substack{\tau\in\Sym_n\\\ [\sigma,\tau]\ \text{Boolean}}} q^{\inv(\tau)-\inv(\sigma)} = \prod_{i=1}^k \Fib_{a_i(\sigma)}(q).
\end{equation}
\end{remark}

\section{Foata invariance and inv-maj equidistribution}\label{sec:equidistribution}

In this section, we prove that a classical bijection of Foata \cite{foata} preserves the classes of $\ell$-interval parking functions for $\ell\in\{1,2,n-2,n-1\}$, from which it follows that
the inversion and major index statistics are equidistributed on each of these classes.

\begin{theorem} \label{thm:main-Foata}
Let $1\leq\ell<n$, and let $\IPF_n(\ell)$ denote the class of $\ell$-interval parking functions of length $n$.
\begin{itemize}
\item If $\ell\in\{1,2,n-2,n-1\}$, then the Foata transform restricts to a bijection $\IPF_n(\ell)\to\IPF_n(\ell)$, and consequently
the inversion and major index statistics are equidistributed on $\IPF_n(\ell)$, i.e.,
\[
\sum_{p\in \IPF_n(\ell)}t^{\inv(p)}= \sum_{p\in \IPF_n(\ell)}t^{\maj(p)}.
\]
\item If $\ell\not\in\{1,2,n-2,n-1\}$, then inversion and major index are not equidistributed on $\IPF_n(\ell)$, so the Foata transform cannot be a bijection $\IPF_n(\ell)\to\IPF_n(\ell)$.
\end{itemize}
\end{theorem}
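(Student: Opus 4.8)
The plan is to separate the two bullets and drive both from one formal principle. Because Foata's transform $F$ preserves content and satisfies $\inv(F(\alpha))=\maj(\alpha)$, if $F$ restricts to a bijection $\IPF_n(\ell)\to\IPF_n(\ell)$ then reindexing $\beta=F(\alpha)$ gives $\sum_{\alpha}t^{\maj(\alpha)}=\sum_{\alpha}t^{\inv(F(\alpha))}=\sum_{\beta}t^{\inv(\beta)}$, so equidistribution is immediate; conversely, if $\inv$ and $\maj$ are \emph{not} equidistributed on $\IPF_n(\ell)$, then no content-preserving bijection carrying $\maj$ to $\inv$ can stabilize the set, so in particular $F$ cannot. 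Thus the theorem reduces to two claims: (A) $F$ stabilizes $\IPF_n(\ell)$ for $\ell\in\{1,2,n-2,n-1\}$, and (B) $\inv$ and $\maj$ are not equidistributed on $\IPF_n(\ell)$ for $3\le\ell\le n-3$.

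For (A), the two large-$\ell$ cases are quick. When $\ell=n-1$ we have $\IPF_n(n-1)=\PF_n$, a union of content classes, hence stabilized by the content-preserving map $F$. When $\ell=n-2$, I would argue on the complement $B=\PF_n\setminus\IPF_n(n-2)$, which by \cref{ex:basic counts} consists of the $\alpha$ with $a_n=1$ whose prefix $(a_1,\dots,a_{n-1})$ lies in $\PF_{n-1}$. The key point is that $F$ fixes the final letter of any word, so $F(\alpha)$ again ends in $1$; moreover the first $n-1$ entries of $F(\alpha)$ have the same content as the prefix of $\alpha$, and since that content is a parking content (membership in $\PF_{n-1}$ is rearrangement-invariant), those entries again form an element of $\PF_{n-1}$. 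Hence $F(B)\subseteq B$, and bijectivity gives $F(B)=B$, so $F(\IPF_n(n-2))=\IPF_n(n-2)$.

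The cases $\ell=1$ and $\ell=2$ are the heart of the matter, and here I expect the real difficulty. Content-preservation already gives $F(\alpha)^\uparrow=\alpha^\uparrow$, so for $\ell=1$ the sorted word of $F(\alpha)$ is the same block word as that of $\alpha$ (\cref{thm:upf_rearrangement}); what remains is to show that $F(\alpha)$ is again a shuffle of its blocks, equivalently that the maximum displacement does not rise above $\ell$. My plan is to realize $F$ through its standard left-to-right construction, in which each newly read letter triggers a cyclic shift of a block decomposition of the word built so far, and to track the effect of each such elementary move on the parking outcome using the perturbation results advertised in the introduction (\cref{lemma:rearrange}, \cref{prop:sorting}, \cref{lemma:reserved-spot}), which describe how transposing consecutive entries changes where cars park. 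For $\ell=1$ the block structure keeps the bookkeeping tractable; the case $\ell=2$ is the main obstacle precisely because no analogue of \cref{thm:upf_rearrangement} is available, so one must argue directly that each elementary Foata move keeps every displacement at most $2$, treating by hand the configurations in which some car is about to acquire displacement $3$.

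For (B), it suffices to produce, for every $n\ge 6$ and every $\ell$ with $3\le\ell\le n-3$, a single exponent at which the coefficients of $\sum_{\alpha\in\IPF_n(\ell)}t^{\inv(\alpha)}$ and $\sum_{\alpha\in\IPF_n(\ell)}t^{\maj(\alpha)}$ differ. Since both polynomials evaluate to $|\IPF_n(\ell)|$ at $t=1$ and agree on the permutation part of $\IPF_n(\ell)$, where $\inv$ and $\maj$ are already equidistributed, any discrepancy must originate from parking functions with repeated values, whose membership in $\IPF_n(\ell)$ depends on the arrangement through the bound $\maxdisp\le\ell$. The plan is to isolate a small, explicitly parametrized family of such parking functions (for example those with a single repeated value), enumerate $\inv$ and $\maj$ on it directly, and show that the displacement cutoff deletes $\inv$-elements and $\maj$-elements in unequal numbers exactly when $3\le\ell\le n-3$; the boundary values $\ell\le2$ and $\ell\ge n-2$ are where part~(A) forces the two counts to coincide. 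Producing one discrepant coefficient uniformly across this entire range, rather than only for the first instance $(n,\ell)=(6,3)$, is the delicate part of this direction.
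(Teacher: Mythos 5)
Your reduction framework and the two large-$\ell$ cases are correct and essentially identical to the paper's: $\IPF_n(n-1)=\PF_n$ is rearrangement-closed, and for $\ell=n-2$ you argue on the complement exactly as the paper's \cref{foata:ell-is-n-minus-two} does, using that $F$ fixes the last letter and permutes the prefix. But the remaining three components --- $\ell=1$, $\ell=2$, and non-equidistribution for $3\le\ell\le n-3$ --- are presented as plans rather than proofs, and they are the entire substance of the theorem. Worse, the plan for $\ell=1$, as stated, would fail: it is not true that an elementary segment-cycling step preserves maximum displacement, even on unit interval parking functions. For example, $(1,1,2)\in\UPF_3$, but cycling the whole word gives $(2,1,1)$, which has maximum displacement $2$; and this cycling genuinely occurs inside Foata's algorithm (it is the cycling step in computing $F(1,1,2,1)$). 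So one cannot simply ``track each elementary move'' and check displacements: the proof must exploit the way Foata's separator rule couples the cycled segments to the newly appended letter. That is exactly where the paper's proof of \cref{Foata:ell-is-one} does its work --- after showing via \cref{lemma:rearrange} and \cref{lemma:reserved-spot} that a bad cycling forces a unique car $C_j$ with $q_j=q_b-1$ to jump to displacement $2$, it derives a contradiction from the separator condition, which forces $p_n=q_b$ and leaves the last car unable to park within displacement $1$. The $\ell=2$ case (\cref{Foata:ell-is-two}) requires still more: a minimal-counterexample argument run over partial Foata transforms, with two separate cases according to the separator rule. None of this machinery is present in, or recoverable from, your outline.

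Likewise for the middle range you only sketch a strategy (``isolate a family, enumerate, show the counts differ'') and you yourself flag that making it work uniformly in $(n,\ell)$ is the delicate point --- but that delicate point \emph{is} the proof. The paper's \cref{inv-maj-not-equi} executes it at the coefficient of $t^1$: it constructs explicit bijections from $\{\alpha\in\IPF_n(\ell)\suchthat\inv(\alpha)=1\}$ and $\{\alpha\in\IPF_n(\ell)\suchthat\maj(\alpha)=1\}$ onto sets of pairs $(\alpha^\uparrow,i)$ cut out by explicit displacement conditions (conditions (I2) and (M2) there), observes that (M2) implies (I2), and then exhibits the explicit witness
\[
\alpha=(\underbrace{1,\dots,1}_{\ell+1},\,\ell,\,\ell+1,\,\ell+4,\,\ell+5,\dots,n)
\]
satisfying (I2) but not (M2) for every $n\ge 6$ and $3\le\ell\le n-3$, which gives the strict inequality of coefficients uniformly. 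Until you supply arguments at this level for $\ell=1$, $\ell=2$, and $3\le\ell\le n-3$, your proposal establishes only the two easy cases.
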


The proof spans several cases, which we order by increasing difficulty.
\begin{itemize}
\item $\ell=n-2$: \Cref{foata:ell-is-n-minus-two}
\item $3\leq\ell\leq n-3$: \Cref{inv-maj-not-equi} \item $\ell=1$: \Cref{Foata:ell-is-one}
\item $\ell=2$: \Cref{Foata:ell-is-two}
\end{itemize}

\subsection{The Foata transform on words}
We begin by describing Foata's bijection \cite{foata} and its relevant properties.

\begin{definition}[\cite{foata}]\label{defn:foata}
The \defterm{Foata transform} is the function~$F$ defined inductively on words $w=w_1\cdots w_n$ as follows.
\begin{itemize}
\item For $n=1$, we set $F(w)=w$.
\item For $n>1$, $F(w)$ is defined as follows.  Let $w'=F(w_1\cdots w_{n-1})=w'_1\cdots w'_{n-1}$. 
\begin{itemize}
\item If $w_n\geq w'_{n-1}$, then place a separator after every $w'_i$ such that $w_n\geq w'_i$.
\item If $w_n<w'_{n-1}$, then place a separator after every $w'_i$ such that $w_n<w'_i$.
\end{itemize}
\item Cycle each \defterm{segment} (i.e., each maximal subword between consecutive separators) by moving its rightmost entry to the start.
\item Finally, append $w_n$.  The result is $F(w)$.
\end{itemize}
\end{definition}

For an example of the algorithm in action, see \cite[Example 1.4.7]{stanley2012enumerative}.

Foata \cite{foata} proved that $F$ is a bijection, and that
$\inv(F(w))=\maj(w)$ for all words $w$, which implies equidistribution of inversion number and major index for permutations (which are $0$-interval parking functions), i.e.,
\begin{equation} \label{equi:Sn}
\sum_{w\in\Sym_n} q^{\inv(w)} = \sum_{w\in\Sym_n} q^{\maj(w)}.
\end{equation}
Equidistribution had been proved previously by MacMahon \cite{MacMahon}, but the Foata transform gave the first combinatorial proof.  (For more detail, see \cite[pp.~41--43]{stanley2012enumerative}.)

It is immediate from the definition of $F$ that it preserves the content of a word, i.e., the number of occurrences of each letter.  In particular, it preserves the class of parking functions (which are $(n-1)$-interval parking functions). Moreover, the definition of $F$ also implies that for each $i$, the ``partial Foata transform'' $F_i$ defined by $F_i(w) = F(w_1\cdots w_i) w_{i+1} \cdots w_n$ is also a bijection.

The motivating question of this section is: \textit{When does the Foata transform preserve the class of $\ell$-interval parking functions of length~$n$?}  When the answer is positive, equidistribution of inversion number and major index for that class follows immediately.

\subsection{Permuting parking functions}

Here we compare different parking functions related by permutations (specifically, by partial Foata transforms).  It is convenient to assign each car a permanent unique identifier (its ``license plate'', so to speak) and denote a parking function $p=(p_1,\dots,p_n)$ by a \textit{two-line} array
\[p=\left(\begin{array}{ccc}
C_1&\cdots&C_n\\ p_1&\cdots&p_n\end{array}\right)\]
in which $C_i$ means the car with license plate $i$; $p_i$ is its preference; and the order of columns indicates the order with which cars enter the parking lot.

\begin{remark}
The objective of using $C_i$ is to be able to change the order in which the cars park but not change their preferences. For example, when $p=112$, we say car 2 prefers to park in spot 1. Then we can tell those cars to keep their preferences but change their order in the queue to now park enter in the order $1\to 3\to 2$. This creates the parking function $q=121$. By using the notation $C_i$ we are able to then reference that the car that was the 2nd car in $p$  is now the 3rd car in $q$.
In other words,
\[p=\left(\begin{array}{ccc}
C_1&C_2&C_3\\ 1&1&2\end{array}\right)\quad \text{becomes}\quad q=\left(\begin{array}{ccc}
C_1&C_3&C_2\\ 1&2&1\end{array}\right).\]
This notation is convenient, as it allows one to say clearly how we compare the displacements of the cars as we make small changes to a parking function.
\end{remark}

Accordingly, we write
$\spot_p(C_i)$ for the spot in which car $C_i$ parks with respect to $p$, and define
$\disp_p(C_i)=\spot_p(C_i)-p_i$.  For example, 
if $p$ is the parking function
\[p=\left(\begin{array}{cccccc} C_1&C_2&C_3&C_4&C_5&C_6\\ 1&1&1&1&4&3\end{array}\right)\]
then its Foata transform is
\[r=F(p)=\left(\begin{array}{cccccc} C_5&C_1&C_2&C_3&C_4&C_6\\ 4&1&1&1&1&3\end{array}\right).\]
We can see for example that $\spot_r(C_4)=5$ and $\disp_r(C_4)=5-1=4$. 
Recall that a parking function $p$ is an $\ell$-interval parking function if and only if $\max_i\{\disp_p(C_i)\}\leq \ell$.

\begin{lemma}[Interval Rearrangement]  \label{lemma:rearrange}
Let $p$ be a parking function of length $n$.
Partition $[n]$ into intervals $B_1,\dots,B_k$, and let $q=p\cdot w$, where $w\in\Sym_{B_1}\times\cdots
\times\Sym_{B_k}$.
(See \Cref{section:csp} for the action of permutations on parking functions.)
Then for each $i\in[k]$ we have
\[\{\spot_p(j)\suchthat j\in B_i\}=\{\spot_q(j)\suchthat j\in B_i\}.\]
\end{lemma}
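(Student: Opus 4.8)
The plan is to reduce the lemma to the case where $w$ is a single adjacent transposition swapping two \emph{consecutively arriving} cars within one block, and then to show that such a swap leaves invariant the set of spots occupied after those two cars park. Since each $B_i$ is an interval, the group $\Sym_{B_1}\times\cdots\times\Sym_{B_k}$ is generated by the adjacent transpositions $(r\ r{+}1)$ for which $r$ and $r{+}1$ lie in a common block. Writing $w$ as a product of such generators and using the right action $p\cdot(\tau_1\tau_2)=(p\cdot\tau_1)\cdot\tau_2$, I would induct on the number of factors: each partial product is again in $\Sym_{B_1}\times\cdots\times\Sym_{B_k}$, each step multiplies by one generator $\tau=(r\ r{+}1)$ with $r,r{+}1\in B_i$, and a rearrangement of a parking function is a parking function. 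So it suffices to prove the statement when $w=(r\ r{+}1)$ with $r,r{+}1\in B_i$.

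For such a $w$, write $O_m$ (resp.\ $O'_m$) for the set of spots occupied after the first $m$ cars of $p$ (resp.\ of $q=p\cdot w$) have parked. Because cars $1,\dots,r{-}1$ are identical in $p$ and $q$, we have $O_{r-1}=O'_{r-1}$. The crux is the following two-car claim: \emph{if two cars with preferences $a$ and $b$ park one after the other starting from a fixed occupied set $O$, then the unordered pair of spots they occupy is independent of which car goes first.} Granting this, $O_{r+1}=O'_{r+1}$, and since cars $r{+}2,\dots,n$ are identical and resume from the same configuration, $O_m=O'_m$ for all $m\ge r{+}1$ as well.

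To prove the two-car claim, let $f_O(x)$ denote the first empty spot that is $\ge x$ with respect to $O$, and assume without loss of generality that $a\le b$. Comparing $f_O(a)$ with $f_O(b)$ leaves exactly two cases. If $f_O(a)\ge b$, then $f_O(a)=f_O(b)$, so both cars first target the same spot $s$; whichever car arrives first takes $s$, and the other takes the next empty spot above $s$, which is the same in either order since $[a,s)$ is fully occupied in $O$. If instead $f_O(a)<b\le f_O(b)$, then $a$'s spot lies strictly below $b$'s preference, so inserting either car never disturbs the other's choice and each simply takes its own $f_O$-value. In both cases the occupied pair agrees, proving the claim.

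Finally I assemble the conclusion. Since $r$ and $r{+}1$ lie in the same block, no block ends at position $r$, so every end-of-block index is $\le r{-}1$ or $\ge r{+}1$; hence the occupied set at the end of each prefix $B_1\cup\cdots\cup B_i$ agrees for $p$ and $q$. For each $i$, the set $\{\spot_p(j)\suchthat j\in B_i\}$ equals the difference of the end-of-block occupied sets for $B_i$ and $B_{i-1}$, so it coincides with $\{\spot_q(j)\suchthat j\in B_i\}$, as required. The main obstacle is the two-car claim; once the order of two consecutive cars is shown to preserve their occupied pair, the reduction to adjacent transpositions and the telescoping of occupied sets along block boundaries are routine.
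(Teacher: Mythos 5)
Your proposal is correct and follows essentially the same route as the paper's proof: reduce to a single adjacent transposition within a block, then prove a two-car exchange claim by elementary case analysis on first-available spots. Your case split (comparing $f_O(a)$ with $b$) is just a slightly different organization of the paper's comparison of the four candidate spots, and your explicit telescoping of prefix occupied sets across block boundaries makes precise what the paper leaves implicit.
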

This fact may be known, but we have not found it in the literature, so we give a short proof.

\begin{proof} 
The permutation $w$ can be factored as a product of adjacent transpositions, each one belonging to some $\Sym_{B_i}$, so it suffices to show that interchanging the order of two adjacent cars either preserves or swaps their outcomes.
Suppose that $S$ is the set of parking spots available at some point in the algorithm, and that the next two cars in line to park prefer spots $u$ and $v$.  Depending on who parks first, the two spots they park in are either
\[a=\min\{s\in S \suchthat s\geq u\},\ b=\min\{s\in S\setminus a \suchthat s\geq v\}\]
or
\[y=\min\{s\in S \suchthat s\geq v\},\ z=\min\{s\in S\setminus y \suchthat s\geq u\}.\]
If $u=v$ then evidently $a=y$ and $b=z$.  
Without loss of generality, assume $u<v$.

Evidently $u\leq a\leq z$ and $v\leq y\leq b$.  If $(a,y)=(z,b)$ then we are done. 
Otherwise, either $a<z$ or $y<b$.  In the first case, the definitions of $a$ and $z$ imply that $y=a$, from which it follows that $u,u+1,\dots,v-1\not\in S$, and these two conditions together imply $z=b$.
A similar argument shows that $y<b$ implies $(y,z)=(a,b)$.  
\end{proof}

\Cref{lemma:rearrange} generalizes easily from parking functions to parking completions (see \cite{bib:ParkingCompletions}).  One useful consequence is the following.

\begin{proposition} \label{prop:sorting}
Suppose $p,q$ are two parking functions obtained by switching two adjacent cars, as follows:
\[p=\lp\begin{array}{cccc}
    \cdots&C_1&C_2 &\cdots\\
    \cdots&x_1&x_2 &\cdots
\end{array}\rp, \qquad
q=\lp\begin{array}{cccc}
    \cdots&C_2&C_1 &\cdots\\
    \cdots&x_2&x_1 &\cdots
\end{array}\rp,\]
with $x_1>x_2$.
Then $\maxdisp(p)\geq\maxdisp(q)$.  In particular, the increasing rearrangement of every $\ell$-interval parking function is an $\ell$-interval parking function.
\end{proposition}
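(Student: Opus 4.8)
The plan is to prove the single-swap inequality $\maxdisp(p)\geq\maxdisp(q)$ first, and then deduce the statement about increasing rearrangements by a bubble-sort argument. For the single swap, I would begin by invoking \cref{lemma:rearrange} with the interval partition of $[n]$ whose only nonsingleton block is the pair $\{i,i+1\}$ of positions occupied by $C_1$ and $C_2$. This has two consequences: every car other than $C_1,C_2$ parks in the same spot in $p$ and in $q$ (so its displacement is unchanged), and the set $\{\spot_p(C_1),\spot_p(C_2)\}$ equals $\{\spot_q(C_1),\spot_q(C_2)\}$. Consequently $\maxdisp(p)$ and $\maxdisp(q)$ can differ only through the two swapped cars, and it suffices to prove
\[\max\big(\disp_p(C_1),\disp_p(C_2)\big)\ \geq\ \max\big(\disp_q(C_1),\disp_q(C_2)\big).\]

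Next I would use the explicit description of where two adjacent cars park from the proof of \cref{lemma:rearrange}. Let $S$ be the set of spots available when these two cars arrive. In $p$ the car $C_1$ (preference $x_1$) parks first at $a=\min\{s\in S\suchthat s\geq x_1\}$ and then $C_2$ (preference $x_2$) at $b=\min\{s\in S\sm\{a\}\suchthat s\geq x_2\}$; in $q$ the car $C_2$ parks first at $y=\min\{s\in S\suchthat s\geq x_2\}$ and then $C_1$ at $z=\min\{s\in S\sm\{y\}\suchthat s\geq x_1\}$. Since $x_1>x_2$ we always have $y\leq a$, and I would split into the two cases $y=a$ and $y<a$. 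When $y<a$ one checks directly that $z=a$ and $b=y$, so the two cars occupy identical spots in $p$ and $q$ and the displacements are literally equal. When $y=a$ one finds $b=z>a$, so the two cars exchange their spots: in $p$ the small-preference car $C_2$ lands in the far spot $z$, whereas in $q$ it lands in the near spot $a$.

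The crux is the elementary inequality in the swap case $y=a$. Here $\disp_p(C_2)=z-x_2$, and since $z>a$ and $x_1>x_2$ this quantity strictly exceeds each of $a-x_1=\disp_p(C_1)$, $z-x_1=\disp_q(C_1)$, and $a-x_2=\disp_q(C_2)$. Thus $z-x_2$ realizes the maximum displacement of the pair in $p$ and dominates both displacements in $q$, giving the desired inequality (strictly, in this case). Combined with the equality obtained when $y<a$, this proves $\maxdisp(p)\geq\maxdisp(q)$.

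Finally, for the ``in particular'' clause, I would argue that any tuple can be brought to weakly increasing order by a finite sequence of adjacent transpositions, each one interchanging a descent $x_1>x_2$ into the order $x_2,x_1$ --- precisely the move relating $p$ to $q$ above. Each such step is a legal rearrangement to which the single-swap inequality applies, so $\maxdisp$ never increases along the sort; hence $\maxdisp(\alpha^\uparrow)\leq\maxdisp(\alpha)$, and if $\alpha\in\IPF_n(\ell)$ then $\alpha^\uparrow\in\IPF_n(\ell)$. I expect the main obstacle to be the case analysis determining the spot assignment --- in particular, verifying that the two cars either keep or swap spots and correctly identifying $z-x_2$ as the dominant displacement so that the inequality points in the right direction; everything else is bookkeeping.
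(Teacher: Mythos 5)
Your proposal is correct and takes essentially the same approach as the paper: reduce to the two swapped cars via \cref{lemma:rearrange}, split into the cases where the pair's spots are preserved or exchanged, and finish with the same elementary inequality (the displacement $z-x_2$ of the far spot paired with the smaller preference dominates all others), followed by bubble-sorting for the ``in particular'' clause. The only cosmetic difference is that you re-derive the stay-or-swap dichotomy from the explicit available-spot description inside the lemma's proof, whereas the paper obtains it directly from the lemma's statement via a (forced) choice of labeling.
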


\begin{proof}
By interval rearrangement 
(\Cref{lemma:rearrange}), the sets $\{\spot_p(C_x),\spot_p(C_y)\}$ and $\{\spot_q(C_x),\spot_q(C_y)\}$ are equal, say to $\{a,b\}$, where $a<b$.
If $\spot_p(C_x)=\spot_q(C_x)$ and $\spot_p(C_y)=\spot_q(C_y)$, then $\maxdisp(p)=\maxdisp(q)$.  Otherwise, we have without loss of generality
\[a = \spot_p(C_1)=\spot_q(C_2) < \spot_p(C_2)=\spot_q(C_1) = b.\]
In particular, $b>a\geq x_1>x_2$, and
\begin{align*}
 &\max(\disp_p(C_1),\disp_p(C_2)) = \max(a-x_1,b-x_2)\\
>&\max(\disp_q(C_1),\disp_q(C_2)) = \max(b-x_1,a-x_2).
\end{align*}
Thus, replacing $p$ with $q$ can only decrease the maximum displacement, as claimed.   Iterating this argument shows that bubble-sorting (taking a $\ell$-interval parking function and sorting the smallest element to position 1, then the second smallest to position 2, etc.) produces an  $\ell$-interval parking function.
\end{proof}

\begin{lemma}[Reserved Spot Lemma]\label{lemma:reserved-spot}
For $1\leq j<k\leq n$, let
\[p=\left(\begin{array}{ccc}
C_1&\cdots&C_n\\ p_1&\cdots&p_n\end{array}\right)
\quad\text{and}\quad
q=\left(\begin{array}{ccc|cccc|ccc}
C_1&\cdots&C_{j-1} & C_k&C_j&\cdots&C_{k-1} & C_{k+1}&\cdots&C_n\\
p_1&\cdots&p_{j-1} & p_k&p_j&\cdots&p_{k-1} & p_{k+1}&\cdots&p_n \end{array}\right)\]
be parking functions (where the bars are included merely for clarity), and suppose that $\spot_p(C_j)=\spot_q(C_j)$.  Then $\spot_p(C_i)=\spot_q(C_i)$ for all $i\in[n]$.
\end{lemma}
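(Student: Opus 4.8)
The plan is to pass to the block of relocated cars, where the entire question lives, and then analyze greedy parking directly. First I would apply the Interval Rearrangement Lemma (\Cref{lemma:rearrange}) to the interval partition of $[n]$ whose only nonsingleton part is $B=\{j,j+1,\dots,k\}$: indeed $q=p\cdot w$ for a permutation $w$ supported on $B$ (the cyclic relocation carrying the entry of position~$k$ to position~$j$ and shifting $C_j,\dots,C_{k-1}$ one step later). The lemma then yields $\spot_p(C_i)=\spot_q(C_i)$ for every singleton part, i.e.\ for every $i\notin B$, together with the set equality $\{\spot_p(C_i):i\in B\}=\{\spot_q(C_i):i\in B\}$. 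In particular the cars in positions $1,\dots,j-1$ behave identically in $p$ and $q$, so both processes present the \emph{same} pool $A$ of empty spots at the instant the block begins to park. It therefore suffices to show that each car of $B$ parks in the same spot under the two block orders, drawing from the common pool~$A$.

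Next I would write the two greedy processes explicitly. In $p$ the block parks in the order $C_j,C_{j+1},\dots,C_{k-1},C_k$; in $q$ it parks in the order $C_k,C_j,C_{j+1},\dots,C_{k-1}$, with the relocated car $C_k$ jumping to the front and claiming the spot $s^\ast=\min\{s\in A:s\ge p_k\}$. The elementary fact I would exploit is that deleting from a pool a spot that no car ever claims alters none of the greedy choices: if a spot is never the smallest available spot weakly above some car's preference, it may be removed with no effect. Consequently, once I know that $s^\ast$ is never claimed by any of $C_j,\dots,C_{k-1}$ in the $p$-process, these cars must park identically whether they draw from $A$ (as in $p$) or from $A\setminus\{s^\ast\}$ (as in $q$), and $C_k$ is then forced into the unique leftover spot $s^\ast$ in $p$ as well, matching $\spot_q(C_k)$. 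This is exactly where the hypothesis $\spot_p(C_j)=\spot_q(C_j)$ enters: comparing the spot $\min\{s\in A:s\ge p_j\}$ that $C_j$ takes as the first block car in $p$ with the spot $\min\{s\in A\setminus\{s^\ast\}:s\ge p_j\}$ it takes as the second block car in $q$, equality forces $s^\ast\neq\spot_p(C_j)$, so that the jumping car does not seize the spot $C_j$ was about to occupy.

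The crux, and the step I expect to be the main obstacle, is to upgrade this single non-interference statement into the assertion that $s^\ast$ is genuinely \emph{reserved}: claimed by no block car other than $C_k$ in the $p$-process. I would attack this through the bubble-sort decomposition of $w$ into the $k-j$ adjacent transpositions that slide $C_k$ leftward past $C_{k-1},C_{k-2},\dots,C_j$ in turn; by the mechanism in the proof of \Cref{lemma:rearrange}, each such transposition either fixes both cars' spots or exchanges them, an exchange being precisely the event in which $C_k$ captures the spot its current neighbor wanted. The hypothesis on $C_j$ directly controls only the final transposition (the one past $C_j$), forcing it to be non-exchanging; the delicate part is to propagate the non-exchange property backward through all of the earlier transpositions. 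I would manage this by carrying along, after each car parks, the comparison between the two available pools — which differ only in whether $s^\ast$ is still present — and showing that this lone discrepancy is never consumed, so that $s^\ast$ survives to the end and is taken by $C_k$ alone. Once reservedness is established, the conclusion $\spot_p(C_i)=\spot_q(C_i)$ for all $i\in[n]$ follows by combining the unused-spot principle of the second paragraph with the reduction of the first.
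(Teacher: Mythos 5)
Your opening reductions are sound: the passage to the block $B=[j,k]$ via \Cref{lemma:rearrange}, the common pool $A$, the unused-spot principle, and the deduction that the hypothesis forces $s^\ast\neq\spot_p(C_j)$ are all correct. But the step you yourself flag as the crux --- upgrading ``$C_k$ does not seize $C_j$'s spot'' to ``$s^\ast$ is claimed by no car among $C_j,\dots,C_{k-1}$ in the $p$-process'' --- is a genuine gap, and under the literal reading of the hypothesis it cannot be closed, because the claim you need is false. Take $n=3$, $j=1$, $k=3$, $p=(2,1,1)$, so that $q$ runs the cars in the order $C_3,C_1,C_2$ with preferences $(1,2,1)$. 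Then $\spot_p(C_1)=\spot_q(C_1)=2$, so the hypothesis holds, and indeed $s^\ast=1\neq 2=\spot_p(C_1)$, exactly as your second paragraph predicts; yet $C_2$ claims $s^\ast=1$ in the $p$-process, your ``lone discrepancy'' between the pools is consumed, and the conclusion fails outright: $\spot_p(C_2)=1$ while $\spot_q(C_2)=3$, and $\spot_p(C_3)=3$ while $\spot_q(C_3)=1$. As you suspected, the hypothesis on $C_j$ controls only the final transposition in your bubble-sort decomposition and gives no purchase on the earlier ones, so the backward propagation you hoped for is impossible.

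The resolution is that the statement as printed contains an index slip: every application in the paper (see the proofs of \Cref{Foata:ell-is-one} and \Cref{Foata:ell-is-two}, e.g.\ the invocation with $\spot_r(C_b)=\spot_q(C_b)$) takes the hypothesis on the \emph{relocated} car, i.e.\ the car occupying position $j$ in $q$, namely $C_k$: one should assume $\spot_p(C_k)=\spot_q(C_k)$. With that hypothesis your own machinery closes the proof in one stroke, with no propagation at all: $\spot_q(C_k)=s^\ast$ by definition of $s^\ast$, while in $p$ the car $C_k$ parks \emph{last} among the block and, by hypothesis, still obtains $s^\ast$ --- so $s^\ast$ remained free throughout the $p$-process and was never claimed by any of $C_j,\dots,C_{k-1}$; your unused-spot principle then makes those cars park identically from $A$ and from $A\setminus\{s^\ast\}$, and the cars outside $B$ are handled by \Cref{lemma:rearrange}. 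The paper's own route is different: it proves the adjacent case $k=j+1$ by noting that all spots other than one are forced to agree, and then factors the cycle into adjacent transpositions, with the intermediate hypotheses supplied by the monotonicity $\spot_p(C_k)\geq\cdots\geq\spot_q(C_k)$ (moving a car forward in the queue weakly decreases its spot), which pins down $C_k$'s spot at every stage of the chain. That monotonicity exists for the relocated car $C_k$ but has no analogue for $C_j$ --- which is precisely why your sketch stalled where it did.
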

\begin{proof}
Let $p$ and $q$ be parking functions as above with indices $j<k$. 

If $i<j$ or $i<k$, then the conclusion follows from interval rearrangement (\Cref{lemma:rearrange}).

If $k=j+1$, then we know that $\spot_p(C_i)=\spot_q(C_i)$ for all $i\neq k$, so it must be the case that $\spot_p(C_k)=\spot_q(C_k)$ as well.

If $k>j+1$, the result follows from the $k=j+1$ case by factoring the cycle $(j~j+1~\cdots~k-1~k)$ into the product of adjacent transpositions $(k-1~k)(k-2~k-1)\cdots(j~j+1)$.
\end{proof}

\subsection{Proof of Theorem~\ref{thm:main-Foata}}

We begin with the easiest cases, showing that the Foata transform is a bijection when $\ell=n-2$, but not when $3\leq\ell\leq n-3$.

\begin{proposition}\label{foata:ell-is-n-minus-two}
For all $n\geq 3$ the Foata transform restricts to a bijection $\IPF_n(n-2)\to\IPF_n(n-2)$.
\end{proposition}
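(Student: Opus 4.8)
The plan is to leverage the explicit description of $\PF_n \setminus \IPF_n(n-2)$ from \Cref{ex:basic counts}: a parking function $\alpha$ has $\maxdisp(\alpha) = n-1$ (equivalently, $\alpha \in \PF_n \setminus \IPF_n(n-2)$) exactly when $a_n = 1$ and $(a_1,\dots,a_{n-1}) \in \PF_{n-1}$. Since $F$ is already known to be a content-preserving bijection on $\PF_n = \IPF_n(n-1)$, it suffices to show that $F$ preserves the complementary set $\PF_n \setminus \IPF_n(n-2)$; then by bijectivity it also preserves $\IPF_n(n-2)$. So the goal reduces to: $\maxdisp(\alpha) = n-1$ if and only if $\maxdisp(F(\alpha)) = n-1$.

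First I would translate the condition $\maxdisp(\alpha) = n-1$ into a statement about content and structure. A car can only have displacement $n-1$ if it prefers spot $1$ and parks in spot $n$; this forces all other spots $1,\dots,n-1$ to be filled by the other $n-1$ cars, so the first $n-1$ cars (in queue order) must themselves form a parking function of $\PF_{n-1}$, and the last car in queue prefers spot $1$. The key observation is that $\maxdisp(\alpha) = n-1$ depends only on the \emph{content} of $\alpha$ together with the position of a preferred-$1$ entry at the end of the queue. Concretely, I would argue that $\maxdisp(\alpha) = n-1$ holds if and only if the multiset of preferences of $\alpha$ is that of some $\PF_{n-1}$ with an extra $1$ appended, i.e., exactly one car prefers a spot that can only be reached after all others park.

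The cleanest route is to characterize membership in $\PF_n \setminus \IPF_n(n-2)$ purely in terms of content where possible. Observe that $(a_1,\dots,a_{n-1}) \in \PF_{n-1}$ together with $a_n = 1$ means the \emph{sorted} tuple $\alpha^\uparrow$ satisfies $a'_i \le i-1$ for $i \ge 2$ while $a'_1 = a'_2 = 1$ --- that is, the rearrangement has at least two $1$'s and the deletion of one $1$ leaves a valid $\PF_{n-1}^\uparrow$. Since $F$ preserves content, $\alpha$ and $F(\alpha)$ have identical $\alpha^\uparrow$, so the \emph{rearrangement} condition is automatically preserved by $F$. The only thing left to check is the positional condition, namely that the distinguished preferred-$1$ car is last in queue. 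I would verify that within the Foata transform, the sorted content condition (two or more $1$'s, with $\alpha^\uparrow$ minus a $1$ being a $\PF_{n-1}^\uparrow$) already forces $\maxdisp = n-1$ regardless of order: by the rearrangement criterion and the pigeonhole argument in \Cref{ex:basic counts}, any arrangement of such a content multiset has a car displaced to spot $n$ from preference $1$, provided the content is ``tight'' enough. The cleanest formulation is therefore that $\maxdisp(\alpha)=n-1$ is a \emph{content-only} condition on $\alpha$, and since $F$ preserves content it preserves this set.

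The main obstacle I anticipate is confirming that $\maxdisp(\alpha) = n-1$ truly depends only on content and not on queue order --- a car preferring $1$ need not be displaced to spot $n$ if it parks early. To handle this I would invoke \Cref{prop:sorting}: the increasing rearrangement $\alpha^\uparrow$ has the \emph{same} maxdisp behavior at the extreme, and more carefully, $\maxdisp(\alpha) = n-1$ forces the content to be exactly ``$\PF_{n-1}^\uparrow$ plus a $1$'' \emph{and} forces a preferred-$1$ car to be last. If I cannot reduce cleanly to content alone, the fallback is a direct structural argument: show that $F$ sends a word ending in the critical preferred-$1$ configuration to another such word, tracking how Foata's final ``append $w_n$'' step interacts with the segment structure when $w_n$ is minimal. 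Since the last letter appended by $F$ equals the last letter of the input, and a minimal last letter ($w_n = 1$) creates no separators that disturb the parking of the preceding $n-1$ cars, the image $F(\alpha)$ retains a preferred-$1$ entry producing displacement $n-1$. This local analysis of the terminal step of the Foata algorithm, combined with content-preservation, closes the argument.
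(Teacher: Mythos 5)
Your primary route has a genuine gap: the claim that $\maxdisp(\alpha)=n-1$ is a content-only condition is false. Take $n=3$ and compare $\alpha=(1,2,1)$ with $\beta=(1,1,2)$: they have the same content, but $\maxdisp(\alpha)=2=n-1$ while $\maxdisp(\beta)=1$ (more generally, $(1,2,\dots,n-1,1)$ has maximum displacement $n-1$ while its rearrangement $(1,1,2,\dots,n-1)$ has maximum displacement $1$). The positional requirement --- that the car preferring spot $1$ be \emph{last} in the queue --- cannot be discarded, so ``since $F$ preserves content it preserves this set'' does not close the argument. Indeed, this is exactly why the statement is nontrivial: $\IPF_n(n-2)$ is not closed under rearrangement, so content-preservation of $F$ alone can never suffice.

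Your fallback, however, is essentially the paper's proof, and it works once stated precisely. The two facts needed are: (i) $F(p)$ ends in $p_n$, and (ii) the first $n-1$ letters of $F(p)$ are a rearrangement of $p_1,\dots,p_{n-1}$. So if $p_n=1$ and $(p_1,\dots,p_{n-1})\in\PF_{n-1}$, then the first $n-1$ letters of $F(p)$ still form an element of $\PF_{n-1}$, because classical parking functions (unlike $\ell$-interval ones) \emph{are} invariant under rearrangement; hence $F(p)\in\PF_n\setminus\IPF_n(n-2)$. Note that your justification for this step --- ``a minimal last letter creates no separators that disturb the parking of the preceding $n-1$ cars'' --- is not right as stated: when $w_n=1$ separators are in general created and the segments are cycled, so the first $n-1$ cars are genuinely reordered and their individual parking spots may change; what saves the argument is rearrangement-invariance of $\PF_{n-1}$, not absence of disturbance. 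With (i) and (ii), $F$ maps the finite set $\PF_n\setminus\IPF_n(n-2)$ injectively into itself, hence onto itself, and since $F$ is a bijection of $\PF_n$ it restricts to a bijection of $\IPF_n(n-2)$ --- exactly the paper's argument. In short: drop the content-only reduction entirely and promote your ``fallback'' (with the corrected justification) to the proof.
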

\begin{proof}
A parking function $p$ of length $n$ fails to be an $(n-2)$-parking function precisely when some car has displacement $n-1$.  That car must be the last to park, must prefer spot 1, and must park in spot $n$, which implies that $\hat p=(p_1,\dots,p_{n-1})$ is a parking function of length $n-1$.  (In particular, $|\IPF_n(n-2)|=(n+1)^{n-1}-n^{n-2}$.)
Since $F$ fixes the last digit of $p$ and permutes the entries of $\hat p$, it follows that it fixes $\PF_n\setminus\IPF_n(n-2)$, hence must fix $\IPF_n(n-2)$.
\end{proof}

\begin{proposition} \label{inv-maj-not-equi}
    For $n\geq 6$ and $3\leq \ell\leq n-3$, the statistics $\inv$ and $\maj$ are not equidistributed on $\IPF_{n}(\ell)$.
\end{proposition}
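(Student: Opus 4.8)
The plan is to disprove equidistribution directly, by exhibiting a single integer $d$ for which the number of $\ell$-interval parking functions with $\inv=d$ differs from the number with $\maj=d$; since two polynomials are equal if and only if all their coefficients agree, any such $d$ suffices. Because inversions and major index are already equidistributed on all of $\PF_n$ (parking functions are closed under rearrangement, so MacMahon's theorem applies, as noted in \Cref{sec:background on perm stats}), it is equivalent—and often cleaner—to produce the discrepancy on the complement $\PF_n\setminus\IPF_n(\ell)=\{p\in\PF_n\suchthat \maxdisp(p)\geq \ell+1\}$: if $\inv$ and $\maj$ were equidistributed on $\IPF_n(\ell)$, then subtracting from the balanced total over $\PF_n$ would force equidistribution on the complement as well.

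The source of any discrepancy is precisely the failure of $\IPF_n(\ell)$ to be closed under rearrangement. Within a fixed content (multiset of preferences) $\mu$, MacMahon's theorem makes $\inv$ and $\maj$ equidistributed over all arrangements of $\mu$, but the sub-collection of arrangements with $\maxdisp\le\ell$ need not respect this balance. Permutations—all of which lie in every $\IPF_n(\ell)$ and are equidistributed by Foata's original theorem—contribute a balanced part and dominate the high-degree coefficients, so the asymmetry must come from parking functions with repeated entries, which populate the low-degree coefficients. Accordingly I would compare a low-degree coefficient, most naturally $d=1$:
\[
\#\{p\in\IPF_n(\ell)\suchthat \inv(p)=1\}\quad\text{against}\quad \#\{p\in\IPF_n(\ell)\suchthat \maj(p)=1\}.
\]
Both families are ``nearly sorted'': $\maj(p)=1$ means $\Des(p)=\{1\}$, so $p_1>p_2\leq p_3\leq\cdots\leq p_n$, whereas $\inv(p)=1$ forces the unique inversion to be an adjacent pair (a short argument shows a lone inversion $(i,j)$ must have $j=i+1$), so $p$ is obtained from its increasing rearrangement $p^\uparrow$ by transposing two adjacent entries of distinct value.

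In either case \Cref{prop:sorting} guarantees $p^\uparrow\in\IPF_n(\ell)$ whenever $p\in\IPF_n(\ell)$, so membership reduces to a condition on $p^\uparrow$ together with a purely local displacement computation for the two relocated cars—whose parking spots, by \Cref{lemma:rearrange}, form a two-element set determined by the local data. Carrying this out, each count becomes a sum over increasing parking functions $c\in\IPF_n^\uparrow(\ell)$ of the number of admissible local moves, minus a correction counting moves that push a car's displacement past $\ell$. The correction for the $\inv$ family is controlled by positions where $c$ carries a car at displacement exactly $\ell$ together with a strict ascent, while the correction for the $\maj$ family is controlled by where the single displaced leading car may sit; these corrections have genuinely different shapes. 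The hypotheses pinpoint when they fail to cancel: one needs $\ell\geq 3$ for displacement-$\ell$ configurations to arise beyond the range already handled by the Foata argument ($\ell\leq 2$), and one needs $\ell\leq n-3$ (equivalently $n-\ell\geq 3$) so that the complement is large enough to be unbalanced—for $\ell\geq n-2$ it reduces to the small, explicitly described set of \Cref{ex:basic counts}, on which balance persists.

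The main obstacle is twofold. First, it is conceivable that the $d=1$ coefficients happen to coincide and the first discrepancy occurs at $d=2$ (comparing parking functions with two inversions against those with a single descent at position $2$), forcing a messier enumeration; so one must actually verify which low degree detects the imbalance. Second, and more seriously, one must rule out accidental cancellation across contents, since distinct contents contribute to the same degree: the real content of the proof is showing the net count is unequal for \emph{every} pair $(n,\ell)$ with $n\geq 6$ and $3\leq\ell\leq n-3$, not merely for sporadic base cases. I would handle this by isolating a single content whose inclusion in $\IPF_n(\ell)$ is \emph{partial}—so that some arrangements satisfy $\maxdisp\le\ell$ and some do not, breaking the within-content MacMahon balance—and then arguing that this imbalance is not offset by the other contents at the chosen degree.
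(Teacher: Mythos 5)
Your opening moves are exactly those of the paper's proof: compare $|\{p\in\IPF_n(\ell)\suchthat \inv(p)=1\}|$ with $|\{p\in\IPF_n(\ell)\suchthat \maj(p)=1\}|$, characterize both families as near-sorted words, and use \Cref{prop:sorting} together with \Cref{lemma:rearrange} to reduce membership in $\IPF_n(\ell)$ to a condition on the increasing rearrangement plus a local displacement computation for the relocated cars. But the proposal stops exactly where the proof has to start: you never establish that the two counts differ. You flag this yourself --- ``it is conceivable that the $d=1$ coefficients happen to coincide,'' and ruling out cancellation across contents is ``the real content of the proof'' --- and your proposed remedy (isolate one content with partial membership, then argue no offsetting occurs) is left entirely unexecuted. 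As written, the argument establishes nothing beyond the setup that MacMahon and Foata already provide.

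The missing idea, which the paper supplies, is to organize the comparison so that cancellation is structurally impossible, rather than something to be estimated away. The paper parametrizes both families by pairs $(\alpha,i)$ with $\alpha\in\IPF_n^\uparrow(\ell)$ and $a_i<a_{i+1}$: swapping $a_i,a_{i+1}$ is a bijection onto the $\inv=1$ family precisely when (I2) either $a_{i+1}>i$ or $\disp_\alpha(C_i)\le\ell-1$, while moving $a_{i+1}$ to the front is a bijection onto the $\maj=1$ family precisely when (M2) $\disp_\alpha(C_j)\le\ell-1$ for every $j$ with $a_{i+1}\le j\le i$. Condition (M2) implies (I2) (if $a_{i+1}\le i$, take $j=i$), so the $\maj$-parametrizing set is contained in the $\inv$-parametrizing set: the comparison is a set containment, not a signed sum over contents, and there is nothing to cancel. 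Strict inequality then needs only a single witness, which the paper exhibits explicitly: $\alpha=(1,\dots,1,\ell,\ell+1,\ell+4,\dots,n)$ with $\ell+1$ ones and $i=\ell+2$ satisfies (I2) but violates (M2), because car $C_{\ell+1}$ has displacement exactly $\ell$. This is also where the hypotheses enter, more concretely than in your heuristic account: the swapped word $(1,\dots,1,\ell+1,\ell,\ell+4,\dots,n)$ contains a car with displacement $3$, so $\ell\ge 3$ is what places it in $\IPF_n(\ell)$, and $\ell\le n-3$ is what allows this word to have length $n$. To turn your proposal into a proof you would need to supply both the one-sided parametrization and such a witness; neither is present.
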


\begin{proof}
Let $\mathcal{I}_{n,\ell}=\{p\in \IPF_n(\ell)\suchthat \inv(p)=1\}$ and $\mathcal{M}_{n,\ell}=\{p\in \IPF_n(\ell)\suchthat \maj(p)=1\}$. We show that $|\mathcal{M}_{n,\ell}|<|\mathcal{I}_{n,\ell}|$.  To do so, define
\begin{align*}
\mathcal{P}_{n,\ell}^I &= \left\{(\alpha,i)\in \IPF_n^{\uparrow}(\ell)\times [n-1]\suchthat
	\begin{array}{l}
	\textbf{(I1)}\ \ a_i<a_{i+1}, \text{ and }\\
	\textbf{(I2)}\ \ \text{either $a_{i+1}>i$ or $\disp_{\alpha}(C_i)\leq\ell-1$}
	\end{array} \right\},\\
\mathcal{P}_{n,\ell}^M &= \left\{(\alpha,i)\in \IPF_n^{\uparrow}(\ell)\times [n-1]\suchthat
	\begin{array}{l}
	\textbf{(M1)}\ \ a_i<a_{i+1}, \text{ and }\\
	\textbf{(M2)}\ \ \text{for all $j\in [n]$, if $a_{i+1}\leq j\leq i$, then $\disp_{\alpha}(C_j)\leq \ell-1$}
	\end{array} \right\}.
\end{align*}

Our plan is to define bijections $\kappa:\mathcal{P}_{n,\ell}^I\to\mathcal{I}_{n,\ell}$ and $\lambda:\mathcal{P}_{n,\ell}^M\to\mathcal{M}_{n,\ell}$, then to show that $\mathcal{P}_{n,\ell}^I\supsetneq\mathcal{P}_{n,\ell}^M$.
\medskip

First, given a pair $(\alpha,i)\in\mathcal{P}_{n,\ell}^I$, let $\beta=\kappa(\alpha,i)$ be the parking function obtained by swapping $a_i$ and $a_{i+1}$.  Since $\alpha=\alpha^\uparrow$, it follows that $\beta$ is a parking function with one inversion, and $\alpha$ can be recovered from $\beta$ as its increasing rearrangement.  

By interval rearrangement (\Cref{lemma:rearrange}), every car has the same outcome in $\beta$ as does in $\alpha$, except that $C_i$ and $C_{i+1}$ may switch places.  Since $\alpha=\alpha^\uparrow$, the two parking spots in question are $i$ and $i+1$.  There are two possibilities:
\begin{itemize}
\item If $a_{i+1}=i+1$, then $\spot_\beta(C_j)=\spot_\alpha(C_j)$ for every $j\in[n]$, and it follows that $\beta\in\IPF_n(\ell)$.
\item If $a_{i+1}\leq i$, then $(\spot_{\beta}(C_i),\spot_{\beta}(C_{i+1}))=(\spot_{\alpha}(C_{i+1}),\spot_{\alpha}(C_i))=(i+1,i)$.  By condition \textbf{(I2)},
$\disp_\beta(C_{i+1})=\disp_\alpha(C_{i+1})-1$ and
$\disp_\beta(C_i)=\disp_\alpha(C_i)+1$.  Hence $\beta\in\IPF_n(\ell)$ if and only if $\disp_\alpha(C_i)\leq\ell-1$.
\end{itemize}
Thus we have a injective map $\kappa:\mathcal{P}_{n,\ell}^I\to\mathcal{I}_{n,\ell}$.  For surjectivity, observe that if $\beta\in\mathcal{I}_{n,\ell}$ has a unique inversion $ b_i> b_{i+1}$, then $\alpha=\beta^\uparrow\in\IPF_n^\uparrow(\ell)$ by \Cref{prop:sorting}, and the same case argument shows that $(\alpha,i)\in\mathcal{P}_{n,\ell}^I$.
\medskip

Second, given a pair $(\alpha,i)\in\mathcal{P}_{n,\ell}^M$, let $\beta=\lambda(\alpha,i)$ be the parking function obtained by moving $a_{i+1}$ to the front.  Similarly to the  first case, $\beta$ is a parking function with major index~1, and $\alpha=\beta^\uparrow$.

By interval rearrangement (\Cref{lemma:rearrange}), the spots 
and displacements of cars $C_{i+2},\dots,C_n$ are unchanged from $\alpha$ to $\beta$. By construction, we have for all $j \in [n]$,
\[\spot_\beta(C_j)=\begin{cases}
    a_{i+1}&\text{ if }j=i+1,\\
    \spot_\alpha(C_j)& \text{ if }j\leq i \text{ and } \spot_\alpha(C_j)<a_{i+1},\\
    \spot_\alpha(C_j)+1&\text{ if }j\leq i \text{ and } \spot_\alpha(C_j)\geq a_{i+1},
\end{cases}
\]
and
\[\disp_\beta(C_j)=\begin{cases}
    0&\text{ if }j=i+1,\\
    \disp_\alpha(C_j)&\text{ if }j\leq i \text{ and } \spot_\alpha(C_j)<a_{i+1},\\
    \disp_\alpha(C_j)+1&\text{ if }j\leq i \text{ and } \spot_\alpha(C_j)\geq a_{i+1},
\end{cases}
=\begin{cases}
    0&\text{ if }j=i+1,\\
    \disp_\alpha(C_j)&\text{ if }j<a_{i+1},\\
    \disp_\alpha(C_j)+1&\text{ if }a_{i+1}\leq j\leq i,
\end{cases}\]
and now condition \textbf{(M2)} in the definition of $\mathcal{P}_{n,\ell}^M$ implies that $\beta$ has maximum displacement $\ell$, hence belongs to $\mathcal{M}_{n,\ell}$.

Thus we have an injective map $\lambda:
\mathcal{P}_{n,\ell}^M\to\mathcal{M}_{n,\ell}$.  To see that it is a surjection,
let $\beta\in \mathcal{M}_{n,\ell}$; then $\alpha=\beta^\uparrow$ has the form
\[\alpha=\lp\begin{array}{ccccccccc}
     C_{2}&C_3 &\cdots& C_{i+1}&C_{1}&C_{i+2} &\cdots& C_n\\
      b_{2}& b_3 &\cdots&  b_{i+1}& b_{1} & b_{i+2}&\cdots& C_n 
\end{array}\rp,\]
where $i$ is the greatest index such that $ b_1> b_{i+1}$.  Then $\alpha\in\IPF_n^\uparrow(\ell)$ by \Cref{prop:sorting}, and again the calculation above shows that $(\alpha,i)\in\mathcal{P}_{n,\ell}^M$.

Third, it is immediate from their definitions that $\mathcal{P}_{n,\ell}^I\supseteq\mathcal{P}_{n,\ell}^M$.
To prove that containment is strict, consider the parking function
\[p=(\underbrace{1,\ldots,1}_{\ell+1 \mbox{ copies}},\ell+1,\ell,\ell+4,\ell+5,\dots,n)\in\mathcal{I}_{n,\ell}\]
(recall that $n\geq 6$ and $3\leq\ell\leq n-3$).  Then $\kappa^{-1}(p)=(\alpha,\ell+2)$, where
\[\alpha=(\underbrace{1,\ldots,1}_{\ell+1 \mbox{ copies}},\ell,\ell+1,\ell+4,\ell+5,\dots,n).\]
Thus $(\alpha,\ell+2)\in\mathcal{P}_{n,\ell}^I$. However,
setting $i=\ell+2$ and $j=\ell+1$, we have
\[a_{i+1}=\ell+1\leq j\leq i, \quad\text{but}\quad
\disp_\alpha(C_j)=(\ell+1)-a_{\ell+1}=(\ell+1)-1=\ell.\]
So condition \textbf{(M2)} fails and $(\alpha,\ell+2)\notin\mathcal{P}_{n,\ell}^M$, proving strict containment as desired.
\end{proof}

We next consider the case $\ell=1$.

\begin{proposition} \label{Foata:ell-is-one}
The Foata transform restricts to a bijection $\UPF_n\to\UPF_n$.
\end{proposition}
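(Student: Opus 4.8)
The plan is to show that the Foata transform $F$ maps $\UPF_n$ bijectively onto itself by leveraging the block structure characterization of unit interval parking functions (\Cref{thm:upf_rearrangement}). Since $F$ is already a bijection on all of $\PF_n=\IPF_n(n-1)$ and preserves content, it suffices to prove the single containment $F(\UPF_n)\subseteq\UPF_n$; the reverse inclusion then follows by applying the same argument to $F^{-1}$, or simply by a cardinality count since $F$ is injective and $\UPF_n$ is finite. So the crux is: if $w\in\UPF_n$, then $F(w)\in\UPF_n$.

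First I would recall that $\UPF_n$ is exactly the set of parking functions whose maximum displacement is at most $1$, equivalently (by \Cref{thm:upf_rearrangement}) the words whose weakly increasing rearrangement is a block word and which shuffle their blocks in increasing order. The key observation is that $F$ fixes the increasing rearrangement $w^\uparrow$ (since it preserves content, and $w^\uparrow$ depends only on content), so $F(w)^\uparrow=w^\uparrow$ is automatically a block word. Thus $F(w)$ is a unit interval parking function \emph{if and only if} it satisfies the shuffle condition of part~(2) of \Cref{thm:upf_rearrangement}: within each block $\pi_j$, the repeated values (recall each block consists of a value $j$ followed by some run governed by the block structure, with entries weakly increasing) must appear in weakly increasing order as one reads $F(w)$ left to right. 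Concretely, for a block word the obstruction to being a UPF is an inversion occurring \emph{within} a single block, i.e., two equal-or-adjacent entries belonging to the same block that appear out of order.

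The main work, therefore, is to track how $F$ can create inversions among entries lying in a common block. I would argue at the level of the inductive definition of $F$: recall $F(w)$ is built from $F(w_1\cdots w_{n-1})$ by segmenting according to the comparison of $w_n$ against the running sequence, cyclically rotating each segment, and appending $w_n$. The plan is to show by induction on $n$ that $F$ preserves the property ``no inversion within any block.'' The delicate point is understanding what the cyclic rotation of a segment does to entries of equal value: since \Cref{thm:upf_rearrangement} tells us a UPF has no inversions within blocks, and entries within a block are (weakly) increasing with repetitions controlled by the block-word condition $a_i'\in\{i,i-1\}$, the segments created by Foata's separator rule group together entries in a way compatible with the block decomposition. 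I expect the heart of the proof to be a lemma asserting that Foata's segmentation refines (or is compatible with) the block partition of $w^\uparrow$, so that cycling a segment can never move an entry past another entry of the same block in the wrong order.

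The step I expect to be the main obstacle is precisely this compatibility between Foata's dynamically-defined segments and the static block structure: the separators in \Cref{defn:foata} are placed by comparing the appended letter $w_n$ to the current word, and a single Foata segment may straddle several blocks or split a block, so one must verify carefully that the cyclic rotation never introduces an inversion \emph{internal} to a block. I anticipate handling this by analyzing, for the two cases $w_n\ge w'_{n-1}$ and $w_n<w'_{n-1}$, exactly which equal-valued entries can lie in a common segment, and checking that rotation preserves their relative order whenever they share a block. Once this compatibility lemma is in hand, the inductive step closes and $F(w)\in\UPF_n$ follows; combined with injectivity of $F$ and finiteness of $\UPF_n$, this yields the desired bijection, and equidistribution of $\inv$ and $\maj$ on $\UPF_n$ is then immediate from $\inv(F(w))=\maj(w)$.
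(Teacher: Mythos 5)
Your reduction is sound: since $F$ preserves content, $F(w)^\uparrow=w^\uparrow$ is a block word, so by \Cref{thm:upf_rearrangement} membership of $F(w)$ in $\UPF_n$ is equivalent to $F(w)$ having no inversion between two entries of a common block, and injectivity of $F$ plus finiteness of $\UPF_n$ upgrades the containment $F(\UPF_n)\subseteq\UPF_n$ to a bijection. The genuine gap is that the heart of the argument---your ``compatibility lemma'' asserting that the cycling steps in \Cref{defn:foata} never create an inversion inside a block---is only announced (``I expect\dots'', ``I anticipate\dots''), never proved; as written, the proposal is a plan, not a proof, and it omits precisely the step where all the difficulty lives. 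There is also a structural flaw in the induction you set up: you propose to induct on $n$ through the inductive definition of $F$, but a prefix $w_1\cdots w_{n-1}$ of a UPF need not even be a parking function (e.g.\ $(3,1,2,3)\in\UPF_4$ has prefix $(3,1)\notin\PF_2$), so it has no block structure and the hypothesis ``no inversion within any block'' is undefined for it. The induction must instead run over the partial transforms $F_i(w)=F(w_1\cdots w_i)\,w_{i+1}\cdots w_n$, which are rearrangements of all of $w$, so that the blocks of $w^\uparrow$ (which partition entries by value) still make sense.

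For what it is worth, once set up this way your approach does succeed, and the missing lemma has a short proof. When $w_{i+1}$ is appended: if $w_{i+1}$ is at least the last letter, every segment consists of letters $>w_{i+1}$ followed by a final letter $\leq w_{i+1}$, so cycling only destroys inversions; if $w_{i+1}$ is less than the last letter, cycling creates inversions only between a segment's final letter $x>w_{i+1}$ and letters $y\leq w_{i+1}$ of that segment. If such $x,y$ shared a block, then, because the values of a block form an integer interval and $y\leq w_{i+1}<x$, the value $w_{i+1}$ would itself be a value of that block, hence the entry $w_{i+1}$ would lie in that block (equal values always share a block); but $x$ occurs before $w_{i+1}$ in $w$ with $x>w_{i+1}$, contradicting part~(2) of \Cref{thm:upf_rearrangement} applied to $w$ itself. (Inversions involving the appended letter are excluded by the same shuffle property.) Note that this completed argument is genuinely different from the paper's proof, which never invokes block structure: the paper instead analyzes a single Foata cycling through parking dynamics, using the Interval Rearrangement and Reserved Spot Lemmas (\Cref{lemma:rearrange}, \Cref{lemma:reserved-spot}) to track displacements and derive a contradiction. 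Your route, once the lemma above is actually proved, yields a more self-contained word-combinatorial proof; but as submitted it has a hole exactly at its crux.
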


\begin{proof}
Let $q\in\UPF_n$, and let $r$ be the parking function obtained by cycling a single interval $[a,b]$:
\[q = \left(\begin{array}{c|ccc|c}
\cdots&C_a &\cdots& C_b&\cdots\\
\cdots&q_a &\cdots& q_b&\cdots
\end{array}\right)\
\quad\to\quad
r = \left(\begin{array}{c|cccc|c}
\cdots&C_b & C_a &\cdots& C_{b-1}&\cdots\\
\cdots&q_b & q_a &\cdots& q_{b-1}&\cdots
\end{array}\right).\]
Here the bars are included for clarity.
Note that $\spot_r(C_b)\leq\spot_q(C_b)$ because passing from $q$ to $r$ moves $C_b$ toward the front of the queue.

If $\spot_r(C_b)=\spot_q(C_b)$, then the Reserved Spot Lemma (\Cref{lemma:reserved-spot})
implies that all outcomes are preserved, so $r\in\UPF_n$.
Accordingly, we assume henceforth that $\spot_r(C_b)<\spot_q(C_b)$.  Since $q$ is a unit interval parking function, we have
\[0\leq \disp_r(C_b) = \spot_r(C_b)-q_b < \spot_q(C_b)-q_b = \disp_q(C_b) \leq 1\]
so the two inequalities are both actually equalities. In particular,
\begin{equation} \label{outb}
\spot_r(C_b)=q_b \quad\text{and}\quad \spot_q(C_b)=q_b+1.
\end{equation}
By interval rearrangement (\Cref{lemma:rearrange}),
\begin{align*}
\sum_{i=a}^b \spot_q(C_i) &= \sum_{i=a}^b \spot_r(C_i)
\intertext{so}
\sum_{i=a}^b \disp_q(C_i) &= \sum_{i=a}^b \disp_r(C_i)
\intertext{and}
1 + \sum_{i=a}^{b-1} \disp_q(C_i) &= \sum_{i=a}^{b-1} \disp_r(C_i)\end{align*}
and $\disp_r(C_i)\geq\disp_q(C_i)$ for all $i\in[a,b-1]$. From this we may conclude that there is a unique $j\in[a,b-1]$ such that
\begin{align} \label{dispj}
\disp_r(C_j)&=\disp_q(C_j)+1
\mbox{ and}\\
\label{dispi}
\disp_r(C_i)&=\disp_q(C_i) \quad \mbox{ for all }i\in[a,b-1]\setminus\{j\}.
\end{align}

If $\disp_r(C_j)= 1$ and $\disp_q(C_j)=0$, then $r$ is a $\UPF$ and we are done. Accordingly, assume for the sake of contradiction that $\disp_r(C_j)= 2$ and $\disp_q(C_j)=1$. By the Reserved Spot Lemma (Lemma~\ref{lemma:reserved-spot}), the parking function
\[s = \left(\begin{array}{c|cccccccc|c}
\cdots&C_b & C_j & C_a &\cdots&C_{j-1}&C_{j+1} &\cdots& C_{b-1}&\cdots\\
\cdots&q_b & q_j & q_a &\cdots&q_{j-1}&C_{j+1} &\cdots& q_{b-1}&\cdots
\end{array}\right)\]
satisfies $\spot_s(i)=\spot_r(i)$ for all $i\in[a,b]$.
On the other hand, certainly
\[\{\spot_q(C_i)\suchthat i\in[a,b]\}=
\{\spot_r(C_i)\suchthat i\in[a,b]\}=
\{\spot_s(C_i)\suchthat i\in[a,b]\}\] 
and
we have seen that for $i\not\in\{b,j\}$,
\[\spot_s(C_i)=\spot_r(C_i)=\spot_q(C_i).\]
We conclude that
\[\{\spot_r(C_b),\spot_r(C_j)\}=\{\spot_q(C_b),\spot_q(C_j)\},\]
which forces
\begin{align*}
\spot_q(C_j) &= \spot_r(C_b) = q_b, \mbox{ and}\\
\spot_r(C_j) &= \spot_q(C_b) = q_b+1.
\end{align*}
Combining with~\eqref{dispj}, we have
\[\disp_q(C_j) = 1 = \spot_q(C_j)-q_j = q_b-q_j\]
i.e., $q_j=q_b-1$.  
So it must be the case that spot $q_j$ was occupied by some car in a previous segment, and with respect to~$q$, cars $C_j$ and $C_b$ park in spots $q_j+1$ and $q_j+2$ respectively.  On the other hand, with respect to~$r$, car $C_b$ parks first in spot $q_j+1$, and car $C_j$ must park in spot $q_n+2$, giving it displacement 2.

Now, suppose that the cycling operation that produced $r$ from $q$ was a step in the Foata transform (Definition~\ref{defn:foata}).  In particular
$q_j<p_n\leq q_b$, and in fact $p_n=q_b$ (by the earlier observation $q_j=q_b-1$).  But since $q$ is a $\UPF$, we have
\[\spot_q(C_n) \in \{q_b,q_b+1\}\]
but this is impossible because
\[\spot_q(C_j)=q_j+1=q_b\ \text{ and } \spot_q(C_b)=q_j+2=q_b+1\]
giving the desired contradiction.
\end{proof}

Finally, we consider the most difficult case, when $\ell=2$.

\begin{proposition}\label{Foata:ell-is-two}
For all $n\geq 1$, if $p\in \IPF_n(2)$, then $F(p)\in \IPF_n(2)$.
\end{proposition}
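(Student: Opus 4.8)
The plan is to prove the statement by induction along the Foata construction, using the partial transforms $F_i$ defined in the excerpt. Since $F_0(p)=p\in\IPF_n(2)$ and $F_n(p)=F(p)$, it suffices to show that each step $F_{i-1}(p)\mapsto F_i(p)$ preserves membership in $\IPF_n(2)$. Writing $q=F_{i-1}(p)$, this step fixes positions $i,i+1,\dots,n$ and rearranges positions $1,\dots,i-1$ by cycling the Foata segments determined by the comparison letter $c=p_i$ sitting at position $i$ (each segment having its rightmost car moved to the front, as in \Cref{defn:foata}). Because distinct segments occupy disjoint intervals, \Cref{lemma:rearrange} lets me treat them one at a time, so the whole problem reduces to analyzing how displacements change when a single segment $[a,b]$ is cycled, i.e.\ when we pass from $q$ to the parking function $r$ obtained by moving car $C_b$ to the front of the block $C_a,\dots,C_b$.

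The first step is to pin down exactly how displacements change under this single cycle. By \Cref{lemma:rearrange} the multiset of spots used by $C_a,\dots,C_b$ is unchanged, so the total displacement of the block is conserved; moreover $C_b$ now parks at the earliest available spot $s'=\spot_r(C_b)\ge q_b$, whereas before it parked at $s''=\spot_q(C_b)$, and $s''\le q_b+2$ since $q\in\IPF_n(2)$. A short analysis of the parking process (in the spirit of the use of \Cref{lemma:reserved-spot} in \Cref{Foata:ell-is-one}) then shows that the cars occupying spots $s',s'+1,\dots,s''-1$ in $q$ are precisely the cars whose spots move: each is pushed one spot to the right and so gains exactly one unit of displacement, while every other car keeps its spot. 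Since $[s',s''-1]\subseteq\{q_b,q_b+1\}$, at most two cars are affected, and $r\in\IPF_n(2)$ \emph{unless} one of these ``shifted'' cars already had displacement $2$ in $q$ (which would give it displacement $3$). This is the only configuration left to rule out.

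To do so I split on the Foata comparison that created the segment. If $c\ge q_{i-1}$ (the last entry of the prefix $F(p_1\cdots p_{i-1})$), then every interior entry of the segment exceeds $c\ge q_b$, so each shifted car has preference strictly greater than $q_b$; such a car can only occupy spot $q_b+1$ with displacement $0$, and after the cycle its displacement is $1$, so no violation occurs. The hard case is $c<q_{i-1}$, where the interior preferences are all $\le c<q_b$ and a shifted car could a priori sit at spot $q_b+1$ with displacement $2$. The key leverage is that the car of preference $c$ at position $i$ parks \emph{after} the entire block, hence is itself constrained by $q\in\IPF_n(2)$. Assuming for contradiction that some shifted car $C_j$ has displacement $2$, I track the spots that are full when $C_j$ parks: all of $q_j,q_j+1,\dots,\spot_q(C_j)-1$ are occupied, and since $q_j\le c$ this makes spots $c,c+1,\dots,s''$ all occupied (and they stay occupied) by the time the position-$i$ car parks. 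That car therefore lands at a spot $\ge s''+1$, and since $c\le q_b-1$ and $s''\ge q_b+1$ its displacement is at least $s''+1-c\ge 3$, contradicting $q\in\IPF_n(2)$.

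The main obstacle is exactly this case $c<q_{i-1}$. Unlike the $\ell=1$ situation in \Cref{Foata:ell-is-one}, the object being cycled is only a $2$-interval parking function and not a UPF, so I cannot appeal to block structure, and a shifted car may legitimately have displacement as large as $2$. The contradiction must therefore be extracted entirely from the occupied-spot bookkeeping above, showing that such a displacement-$2$ configuration of $C_j$ would already force the boundary car at position $i$ to exceed the displacement bound. Carrying this out carefully across the sub-cases $s'=q_b$ and $s'=q_b+1$ (with $s''-s'\in\{1,2\}$), and verifying in each that the relevant spots are occupied before the boundary car arrives, is where the real work lies; once it is done, the induction closes and $F(p)\in\IPF_n(2)$ follows.
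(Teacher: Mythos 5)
Your overall architecture is sound, and it is organized genuinely differently from the paper's proof: the paper argues by minimal counterexample (a ``Boojum''), which lets it assume $F_{n-1}(p)\in\IPF_n(2)$ and restrict attention to the \emph{last} segment of the \emph{last} Foata step, whereas you run an induction on the partial transforms $F_i$ for the fixed word $p$ and treat every segment of every step via \Cref{lemma:rearrange}. Your version proves the stronger statement that every $F_i(p)$ stays in $\IPF_n(2)$, and it has the side benefit of never needing prefixes of $p$ to behave like shorter parking functions. Beyond that, the mechanisms coincide: your two cases are exactly the paper's Case~2 and Case~1, and in the hard case both proofs extract the contradiction from the same source, namely the car of preference $c=p_i$ (resp.\ $p_n$) that parks after the cycled segment and whose displacement in $q$ is capped at $2$.

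There is, however, one step that fails as stated: the claim that ``a short analysis of the parking process'' shows the cars occupying spots $s',\dots,s''-1$ in $q$ are precisely the movers, each pushed exactly one spot right. This is not a consequence of the parking process alone, because when the spots occupied near the segment are not contiguous, a bumped car can jump by \emph{more} than one spot. Concretely: suppose the cars preceding the segment occupy spots $1$ and $3$, and the segment is $(1,2)$ (a legitimate Case-B segment for $c=1$). In $q$ the interior car parks at spot $2$ (displacement $1$) and $C_b$ at spot $4$ (displacement $2$); after cycling, $C_b$ takes spot $2$ and the interior car is pushed from spot $2$ all the way to spot $4$, gaining \emph{two} units of displacement. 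So ``the only configuration left to rule out'' is not a shifted car of displacement $2$: a shifted car of displacement $1$ can also break the bound. (In this configuration the boundary car is itself over-displaced in $q$, so the full inductive hypothesis excludes it---but that exclusion \emph{is} the boundary-car argument you only make later; it cannot be an output of parking-process bookkeeping made beforehand, so as written the logic is out of order.) Fortunately, the repair is already inside your proposal: your Case-B contradiction never uses the displacement-$2$ assumption. For \emph{any} shifted car $C_j$ one has $[q_j,\spot_q(C_j)]\supseteq[c,q_b]$ full when $C_j$ parks and $[q_b,s'']$ full when $C_b$ parks, so the position-$i$ car lands beyond $s''\geq q_b+1$ and has displacement at least $s''+1-c\geq 3$. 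Run that argument for an arbitrary shifted car, conclude that in Case B no car moves at all (whence, by \Cref{lemma:reserved-spot}, the cycling changes nothing), keep your Case-A analysis (where the preference constraints genuinely force the unique shifted car to sit at spot $q_b+1$ with displacement $0$ and move to at most $q_b+2$), and delete the false ``$+1$'' claim; then the induction closes and the proof is complete.
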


\begin{proof}
We define a \defterm{Boojum}\footnote{A creature that does not exist: see \cite{Boojum}.} to be a parking function $p$ such that (i) $\maxdisp(p)=2$; (ii) $\maxdisp(F(p))>2$; and (iii) $p$ is of minimal length among all  parking functions satisfying~(i) and~(ii).
We show that Boojums do not exist.

Suppose for contradiction that $p=(p_1,\dots,p_n)$ is a Boojum.  We mostly work with $q=F_{n-1}(p)=(q_1,\dots,q_n)$, where as before $F_{n-1}$ is a partial Foata transform.  Accordingly, we assume that the order of cars with respect to $q$ is $C_1,\dots,C_n$.
Note that $q$ is a parking function (because it is a rearrangement of the parking function $p$).  Let $r=F(p)$.

Observe that $q$ itself is a 2-parking function, because
\begin{itemize}
\item $\hat{p}=(p_1,\dots,p_{n-1})$ is a 2-parking function, and by minimality of Boojums $\hat{q}=(q_1,\dots,q_{n-1})=F(\hat{p})$ is as well; and
\item $q_n=p_n$, so by interval rearrangement (\Cref{lemma:rearrange}) and the fact that $\hat q$ is a permutation of $\hat p$, we have $\disp_q(C_n)=\disp_p(C_n)\leq 2$.
\end{itemize}

Consider what happens in the last step of calculating the Foata transform of $p$.  We place some set of separators in $\hat q$, cycle each segment, and then append $p_n$.  Suppose there are $k\geq 1$ separators after positions $i_1,\dots,i_k$ where $i_k=n-1$.  That is,
\[q = \left(\begin{array}{ccc|ccc|c|ccc|c}
C_1 &\cdots& C_{i_1} & C_{i_1+1} &\cdots& C_{i_2} & \cdots & C_{i_{k-1}+1} &\cdots& C_{i_k} & C_n\\
q_1 &\cdots& q_{i_1} & q_{i_1+1} &\cdots& q_{i_2} & \cdots & q_{i_{k-1}+1} &\cdots& q_{i_k} & p_n
\end{array}\right)\]
and
\[r = \left(\begin{array}{ccc|ccc|c|ccc|c}
C_{i_1}\ C_1&\cdots& C_{i_1-1} & C_{i_2}\ C_{i_1+1} &\cdots& C_{i_2-1} & \cdots & C_{i_k}\ C_{i_{k-1}+1} &\cdots& C_{i_k-1} & C_n\\
q_{i_1}\ q_1&\cdots& q_{i_1-1} & q_{i_2}\ q_{i_1+1} &\cdots& q_{i_2-1} & \cdots & q_{i_k}\ q_{i_{k-1}+1} &\cdots& q_{i_k-1} & p_n
\end{array}\right).\]

By the definition of a Boojum, $q$ is a $2$-interval parking function and $r$ is not.  That is, for some $j$, we have
\begin{equation} \label{abel}
\disp_q(C_j) \leq 2 < \disp_r(C_j).
\end{equation}
Note that $j\neq n$, because  $\spot_q(C_n)=\spot_r(C_n)$ (so, $\disp_r(C_n)=\disp_q(C_n)$).  Moreover, if $j\leq i_{k-1}$, then $q_1\cdots q_{i_{k-1}} p_n$ is the $i_{k-1}$th partial Foata transform of a smaller Boojum, which contradicts the minimality assumption on $p$.  Therefore, $i_{k-1}+1 \leq q_j \leq i_k$.  
Thus, we are concerned only with the behavior of the cars with labels in the interval $B=[u,\,\dots,\,n-1]$,
where for convenience we set $u= i_{k-1}+1$.

We claim that $\spot_q(C_{n-1}) > \spot_r(C_{n-1})$.  Indeed, since $C_{n-1}$ parks earlier with respect to $r$ than with respect to $q$, we have $\spot_q(C_{n-1}) \geq \spot_r(C_{n-1})$.  If equality holds, then by the Reserved Spot Lemma (\cref{lemma:reserved-spot}), $r$ would in fact be a $2$-interval parking function, a contradiction.  Consequently,
\begin{equation} \label{baker}
2\geq \disp_q(C_{n-1}) > \disp_r(C_{n-1}) \geq 0.
\end{equation}

Also, $p_n\neq q_j$.  To see this, observe that because $\disp_r(C_j)>\disp_r(C_n)$ even though $C_j$ parks before $C_n$, so their preferred spots cannot possibly be equal.

We now consider two cases of the Foata bijection.
\medskip

\textbf{Case 1:} Assume $q_u,\dots,q_{n-2}\leq p_n<q_{n-1}$.
Observe that no car that originally parked in a spot less than $ q_{n-1}$ is affected by the cycling.  In particular,
\begin{equation} \label{delta}
\spot_q(C_j)\geq q_{n-1}.
\end{equation}

The assumptions of Case~1 imply that
\begin{equation} \label{charlie}
q_{n-1} > p_n > q_j.
\end{equation}
Now from \eqref{abel} and \eqref{delta} and \eqref{charlie} we get
\begin{equation} \label{echo}
q_j < p_n < q_{n-1}\leq \spot_q(C_j) \leq 2+q_j.
\end{equation}
Therefore $p_n=q_j+1$, and thus $q_{n-1}=\spot_q(C_j)=q_j+2$.
Since $\disp_r(C_j)\geq3$, we know that spots $q_j+1,q_j+2$ were already occupied before $C_j$ parks, and $q_j+3$ (at least) is occupied after $C_j$ parks.  Therefore, $C_n$, who wanted to park in spot $p_n=q_j+1$, cannot park in any of the spots in the set $\{q_j+1,q_j+2,q_j+3\}$.  So $\disp_r(C_n)\geq3$.  But $\disp_r(C_n)=\disp_q(C_n)$, which contradicts the assumption that $p$ is a $2$-interval parking function
and eliminates Case 1.
\medskip

\textbf{Case 2:} Assume $q_u,\dots,q_{n-2}>p_n\geq q_{n-1}$.  

Suppose that $\spot_q(C_i)\neq q_{n-1}$ for all $i<u$. That is,
spot $q_{n-1}$ is available before the cars in $B$ start to park. Since $q_{n-1}$ is the strictly smallest preference in $B$, it follows that $\spot_r(C_{n-1})=\spot_q(C_{n-1})=q_{n-1}$, so the Reserved Spot Lemma (\cref{lemma:reserved-spot}) implies $\spot_r(C_h)=\spot_q(h)$ for all $h\in[u,n-2]$, contradicting the assumption that $p$ is a Boojum.  Hence $\spot_r(C_{n-1})>q_{n-1}$, i.e., $\disp_r(C_{n-1})>0$, so~\eqref{baker} implies that
\[\spot_r(C_{n-1})=q_{n-1}+1 \quad\text{and thus}\quad \spot_q(C_{n-1})=q_{n-1}+2.\]
This implies in turn that $\spot_q(C_i)\notin\{q_{n-1}+1,q_{n-1}+2\}$ for all $i<u$.  
Therefore, there must be some $h\geq u$ such that $q_h=q_{n-1}+1$ (which is the reason $C_{n-1}$ is unable to park there with respect to $q$), but no other car with preference equal to $q_{n-1}+1$ or $q_{n-1}+2$ (because if there were then $C_{n-1}$ would not
be able to park in $q_{n-1}+2$ with respect to $q$).  But this says that the effect of cycling $C_{n-1}$ to the front of the segment is limited to changing
\[(\spot_q(C_n),\spot_q(C_h))=(q_{n-1}+2,q_{n-1}+1)\]
to
\[(\spot_r(C_n),\spot_r(C_h))=(q_{n-1}+1,q_{n-1}+2)\]
and all the other cars, in particular $C_j$, are unaffected. This rules out Case 2, which completes the proof.
\end{proof}

\subsection{Enumeration by major index}\label{sec:Foata-maj}

The Foata transform $F$ preserves the content of a parking function and therefore its displacement. Therefore, \Cref{thm:main-Foata} has the following immediate consequence.

\begin{corollary}
Let $n\geq 1$ and $\ell\in \{0,1,2,n-2,n-1\}$. Then
\[\sum_{\alpha\in \IPF_n(\ell)}q^{\disp(\alpha)}t^{\inv(\alpha)}=\sum_{\alpha\in \IPF_n(\ell)}q^{\disp(\alpha)}t^{\maj(\alpha)}.\]
\end{corollary}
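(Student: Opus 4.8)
The plan is to obtain this corollary as a direct consequence of \Cref{thm:main-Foata} via a single change of variable under the Foata transform. First I would observe that in each of the five cases the Foata transform $F$ restricts to a bijection of $\IPF_n(\ell)$ onto itself: for $\ell\in\{1,2,n-2,n-1\}$ this is exactly the content of \Cref{thm:main-Foata}, while for $\ell=0$ we have $\IPF_n(0)=\Sym_n$, on which $F$ is the original Foata bijection on permutations.

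Next I would record the two invariance properties of $F$ that drive the argument. The first is that $F$ preserves total displacement; this follows because $F$ preserves content and $\disp(\alpha)=\binom{n+1}{2}-\sum_i a_i$ depends only on the multiset of preferences, the parking spots always forming a permutation of $[n]$. The second is the defining property $\inv(F(\alpha))=\maj(\alpha)$. Both facts are already available in the preceding material.

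With these two facts in hand, the identity follows from a single reindexing. Setting $\beta=F(\alpha)$ and letting $\alpha$ range over $\IPF_n(\ell)$ (so that $\beta$ ranges over the same set, by bijectivity of $F$), I would write
\[
\sum_{\alpha\in\IPF_n(\ell)}q^{\disp(\alpha)}t^{\maj(\alpha)}
=\sum_{\alpha\in\IPF_n(\ell)}q^{\disp(F(\alpha))}t^{\inv(F(\alpha))}
=\sum_{\beta\in\IPF_n(\ell)}q^{\disp(\beta)}t^{\inv(\beta)},
\]
where the first equality applies the two invariance properties termwise and the second is the substitution $\beta=F(\alpha)$. The resulting expression is exactly the $q^{\disp}t^{\inv}$ generating function, which is the claimed equality.

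I do not anticipate any real obstacle at this stage: the entire difficulty has been front-loaded into \Cref{thm:main-Foata}, whose hardest cases are $\ell=1$ and especially $\ell=2$. Once that theorem is in hand, the corollary is purely formal. The only point meriting even a sentence of justification is the displacement invariance, which reduces to the observation that total displacement is a function of content alone and is therefore preserved by any content-preserving map.
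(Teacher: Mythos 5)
Your proof is correct and matches the paper's own argument exactly: the paper likewise deduces the corollary from \Cref{thm:main-Foata} by noting that the Foata transform preserves content, hence displacement, and carries $\maj$ to $\inv$. Your explicit handling of the $\ell=0$ case (where $\IPF_n(0)=\Sym_n$) and the one-line justification that displacement depends only on content are exactly the points the paper leaves implicit.
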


Combining this observation with \Cref{ell-minus-two} and \Cref{thm:disp-inv-enumerator-for-upfs} (together with the fact that $\asc(F(\sigma)^{-1})=\asc(\sigma^{-1})$ for each $\sigma\in \S_n$ \cite[Thm.~1]{Foata1978}) yields major-index versions of those results. 

\begin{corollary}\label{cor: maj-versions-n-1-1}
Let $n\geq 2$. Then 
\begin{align*}
\sum_{\alpha\in \IPF_n(n-2)}q^{\disp(\alpha)}t^{\maj(\alpha)}
&=\sum_{\alpha\in \PF_n}q^{\disp(\alpha)}t^{\maj(\alpha)}-(qt)^{n-1}\sum_{\beta\in \PF_{n-1}}q^{\disp(\beta)}t^{\maj(\beta)-\ones(\beta)},
\intertext{where $\ones(\beta)=|\{i\in[n-1]\suchthat b_i=1\}|$
and}
\sum_{\alpha\in \UPF_n}q^{\disp(\alpha)}t^{\maj(\alpha)}
&=\sum_{\sigma\in \S_n}(1+q)^{\asc(\sigma)}t^{\maj(\sigma^{-1})}.
\end{align*}
\end{corollary}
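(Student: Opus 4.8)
The plan is to deduce each identity from the corresponding inversion identity---\Cref{ell-minus-two} and \Cref{thm:disp-inv-enumerator-for-upfs}---by transporting it through the Foata transform $F$. I will use three properties of $F$: it preserves the content of a word (hence both $\disp$ and the statistic $\ones$, which counts the occurrences of the letter $1$); it satisfies $\inv(F(w))=\maj(w)$; and, by \Cref{thm:main-Foata}, it restricts to a bijection on $\IPF_n(\ell)$ for every $\ell\in\{0,1,2,n-2,n-1\}$. The first two properties are exactly what the immediately preceding corollary packages: for each such Foata-stable class $X$ one has $\sum_{\alpha\in X}q^{\disp(\alpha)}t^{\inv(\alpha)}=\sum_{\alpha\in X}q^{\disp(\alpha)}t^{\maj(\alpha)}$, where $\inv$ and $\maj$ are the statistics of the parking-function word itself.

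I would first handle $\ell=n-2$, the more routine case. In the inversion identity of \Cref{ell-minus-two} I would convert each of the three sums to its major-index version separately, using that each sum individually equals its own $\maj$-counterpart. The left-hand sum ranges over $\IPF_n(n-2)$ and the first right-hand sum over $\PF_n=\IPF_n(n-1)$, both Foata-stable, so the preceding corollary replaces $\inv$ by $\maj$ in each. For the remaining sum over $\PF_{n-1}$, the key point is that $F$ fixes $\ones$ (being content-preserving) while sending $\inv$ to $\maj$; substituting $\beta=F(\gamma)$ therefore turns $\sum_{\beta\in\PF_{n-1}}q^{\disp(\beta)}t^{\inv(\beta)-\ones(\beta)}$ into $\sum_{\gamma\in\PF_{n-1}}q^{\disp(\gamma)}t^{\maj(\gamma)-\ones(\gamma)}$. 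Reassembling the three converted sums gives the first displayed formula.

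The UPF identity is more delicate, because the target sum over $\Sym_n$ carries the factor $(1+q)^{\asc(\sigma)}$ together with the exponent $\maj(\sigma^{-1})$, so I cannot convert $\inv$ to $\maj$ entrywise. Using \Cref{lem: inv of outcome} to identify $\inv(\alpha)$ with $\inv(\car_\alpha)=\inv(\sigma)$ on UPFs, \Cref{thm:disp-inv-enumerator-for-upfs} reads $\sum_{\alpha\in\UPF_n}q^{\disp(\alpha)}t^{\inv(\alpha)}=\sum_{\sigma\in\Sym_n}(1+q)^{\asc(\sigma)}t^{\inv(\sigma)}$; combined with the preceding corollary (for $\ell=1$), the $\maj$-enumerator we want satisfies $\sum_{\alpha\in\UPF_n}q^{\disp(\alpha)}t^{\maj(\alpha)}=\sum_{\sigma\in\Sym_n}(1+q)^{\asc(\sigma)}t^{\inv(\sigma)}$. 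It then remains to prove the permutation identity $\sum_{\sigma}(1+q)^{\asc(\sigma)}t^{\inv(\sigma)}=\sum_{\sigma}(1+q)^{\asc(\sigma)}t^{\maj(\sigma^{-1})}$, which I would establish through the single bijection $\phi\colon\Sym_n\to\Sym_n$ defined by $\phi(\sigma)=F(\sigma^{-1})^{-1}$. Indeed, since $\inv$ is invariant under inversion and $\inv(F(w))=\maj(w)$, we get $\inv(\phi(\sigma))=\inv(F(\sigma^{-1}))=\maj(\sigma^{-1})$, while the cited identity $\asc(F(w)^{-1})=\asc(w^{-1})$ of \cite{Foata1978}, applied with $w=\sigma^{-1}$, gives $\asc(\phi(\sigma))=\asc(\sigma)$; relabeling the sum by $\rho=\phi(\sigma)$ then yields the permutation identity and closes the chain.

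The main obstacle is this permutation identity and, specifically, making the inverses and the ascent statistic line up: applying $F$ directly to the permutation sum does not work, and one is forced to conjugate by inversion and to invoke the non-elementary statement $\asc(F(w)^{-1})=\asc(w^{-1})$ rather than the familiar property $\inv(F(w))=\maj(w)$ alone. I would note in passing that the UPF formula also falls out immediately from the multivariate identity in the remark following \Cref{thm:disp-inv-enumerator-for-upfs} by setting two of its variables to $1$, which bypasses the Foata argument entirely, whereas the $\ell=n-2$ formula admits no such shortcut. Underlying all of these manipulations is \Cref{thm:main-Foata} itself, the genuinely hard input that legitimizes every appeal to Foata-stability of the classes $\IPF_n(\ell)$.
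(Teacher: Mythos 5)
Your proposal is correct and follows essentially the same route as the paper: the paper obtains both formulas by combining the preceding equidistribution corollary (Foata-stability of $\IPF_n(\ell)$ for $\ell\in\{1,n-2,n-1\}$, plus content-preservation of $F$ to handle $\disp$ and $\ones$) with \Cref{ell-minus-two} and \Cref{thm:disp-inv-enumerator-for-upfs}, citing exactly the identity $\asc(F(\sigma)^{-1})=\asc(\sigma^{-1})$ for the permutation-sum step, which your bijection $\phi(\sigma)=F(\sigma^{-1})^{-1}$ simply makes explicit. Your passing observation that the UPF formula also follows by specializing the multivariate identity in the remark after \Cref{thm:disp-inv-enumerator-for-upfs} is a valid (and slightly slicker) alternative for that half, but your main argument matches the paper's.
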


For $2$-interval parking functions (\Cref{thm:disp-enumerator-for-2-interval}), the appearance of the statistics $|\RRR(\beta)|$ and $|\SSS(\beta)|$ makes the application of the Foata transform less straightforward. It turns out that $F$ preserves these two statistics, thanks to the following observation.

\begin{lemma}\label{lem: Foata-adjacent-invariance}
Let $(a_1,a_2,\dots,a_n)\in \N^n$.  Let $j,k\in[n]$ such that $a_j\leq a_k$, and let $i\geq\max(j,k)$.
    \begin{enumerate}
    \item If $a_{i+1}\not\in \{a_j,\dots,a_k-1\}$ then $a_j$ and $a_k$ appear in the same order in $F(a_1 a_2\cdots a_{i+1})$ as they do in $F(a_1 a_2\cdots a_i)$.
     \item  If $\{a_j,\dots,a_k-1\}\cap \{a_{i+1},\dots,a_n\}=\emptyset$, then $a_j$ and $a_k$ appear in the same order in $F(a_1\cdots a_n)$ as they do in $F(a_1 a_2\cdots a_i)$.
    \end{enumerate}
\end{lemma}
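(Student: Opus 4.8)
The plan is to reduce everything to a single step of the Foata transform and to track the two distinguished occurrences throughout, regarding them as cars $C_j$ and $C_k$ in the two-line notation so that their identities persist as the letters are permuted. Recall from \Cref{defn:foata} that passing from $w'=F(a_1\cdots a_i)$ to $F(a_1\cdots a_{i+1})$ consists of cutting $w'$ into segments determined by $c:=a_{i+1}$, cyclically rotating each segment by moving its rightmost entry to the front, and then appending $c$. A cyclic rotation $y_1\cdots y_{s-1}x\mapsto xy_1\cdots y_{s-1}$ preserves the relative order of every pair not involving the moved entry $x$, and rotations in distinct segments do not interact. Hence the relative order of $C_j$ and $C_k$ can change at this step \emph{only if} they lie in a common segment and exactly one of them is that segment's rightmost (moved) entry. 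Since $i\geq\max(j,k)$, both occurrences are already present in $w'$, and the newly appended letter $a_{i+1}$ is a third element that does not disturb their order directly.

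First I would prove part (1) by contraposition: assuming the order of $C_j$ and $C_k$ flips at this step, I will show $a_{i+1}\in\{a_j,\dots,a_k-1\}$. The key is to classify the segments. If $c\geq w'_i$, separators fall after every entry $\leq c$, so each segment has the shape (entries $>c$)(one entry $\leq c$) and its moved entry is $\leq c$; if $c<w'_i$, separators fall after every entry $>c$, so each segment has the shape (entries $\leq c$)(one entry $>c$) and its moved entry is $>c$. In the first case a flip forces the moved entry to be the smaller value $a_j\leq c$, while the other, being a non-moved entry of the same segment, satisfies $a_k>c$; the alternative in which $a_k\leq c$ is the moved entry is impossible, since it would give $a_j>c\geq a_k\geq a_j$. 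The second case is symmetric, with the moved entry forced to be $a_k>c$ and $a_j\leq c$. Either way $a_j\leq c<a_k$, i.e.\ $c=a_{i+1}\in\{a_j,\dots,a_k-1\}$, as desired. When $a_j=a_k$ this set is empty and no flip can occur, which the argument recovers since two equal values cannot sit on opposite sides of $c$.

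Part (2) then follows by iterating part (1). Writing $F(a_1\cdots a_n)$ as the result of successively adjoining $a_{i+1},a_{i+2},\dots,a_n$ to $F(a_1\cdots a_i)$, the step that adjoins $a_{m+1}$ (for $i\leq m\leq n-1$) satisfies $m\geq i\geq\max(j,k)$, so part (1) applies to it; and the hypothesis $\{a_j,\dots,a_k-1\}\cap\{a_{i+1},\dots,a_n\}=\emptyset$ guarantees $a_{m+1}\notin\{a_j,\dots,a_k-1\}$ at every such step. Thus the relative order of $C_j$ and $C_k$ is never disturbed, and it agrees in $F(a_1\cdots a_n)$ and $F(a_1\cdots a_i)$.

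The main obstacle I anticipate is the bookkeeping in the segment classification — specifically, correctly ruling out the ``impossible'' subcase in each of $c\geq w'_i$ and $c<w'_i$ and verifying that every segment ends with an entry on the expected side of the threshold $c$ (so that the moved entry is always $\leq c$ in the first case and $>c$ in the second). Once the segment structure is pinned down, the inequality $a_j\leq c<a_k$ is immediate, and the passage from part (1) to part (2) is purely formal.
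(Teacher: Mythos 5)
Your proof is correct and follows essentially the same approach as the paper: both hinge on the observation that the cycling step can change the relative order of the two tracked occurrences only when they share a segment and one of them is that segment's rightmost entry, and both dispose of part (2) by iterating part (1). The only cosmetic difference is that you argue part (1) contrapositively (a flip forces $a_j\leq a_{i+1}<a_k$) with an explicit classification of segment shapes, whereas the paper argues directly that the hypothesis places separators after both occurrences or after neither, which comes to the same thing.
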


\begin{proof}
(1) The condition $a_{i+1}\not\in \{a_j,\dots,a_k-1\}$ implies that, at the $(i+1)$st stage of the algorithm, separators are placed either after both~$a_j$ and~$a_k$, or after neither. 
If both, then $a_j$ and $a_k$ belong to different segments.  If neither, then if they are in the same segment, then neither one is the last entry in that segment.  Therefore, in all cases, the cycling step maintains the relative position of $a_j$ and $a_k$, so they appear in the same order in $F(a_1\cdots a_{i+1})$ as in $F(a_1\cdots a_i)$.

(2) This claim follows from applying (1) repeatedly.
\end{proof}

\begin{lemma}\label{lem:Foata-preserves-R-and-S}
    If $\alpha\in \UPF_n$, then $|\RRR(\alpha)|=|\RRR(F(\alpha))|$ and $|\SSS(\alpha)|=|\SSS(F(\alpha))|.$
\end{lemma}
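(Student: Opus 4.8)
The plan is to treat the two statistics separately: the equality $|\SSS(\alpha)|=|\SSS(F(\alpha))|$ is essentially free, while $|\RRR(\alpha)|=|\RRR(F(\alpha))|$ requires a careful order-tracking argument built on \Cref{lem: Foata-adjacent-invariance}. For the easy half, recall that $F$ preserves the content of a word, so $F(\alpha)^\uparrow=\alpha^\uparrow$; hence $\alpha$ and $F(\alpha)$ have the same block structure (and $F(\alpha)\in\UPF_n$ by \Cref{Foata:ell-is-one}, so \Cref{thm:upf_rearrangement} applies to it). Since $|\SSS(\beta)|$ depends only on the block sizes via \eqref{S-formula}, we get $|\SSS(\alpha)|=|\SSF(F(\alpha))|$ at once. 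So the whole content is the claim for $\RRR$.

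Write the common block structure as $\pi_1\mid\cdots\mid\pi_m$, and recall that the smallest value $s_j$ of a block $\pi_j$ occurs exactly twice, that distinct blocks occupy disjoint intervals of values, that the maximum value $M_{j-1}$ of $\pi_{j-1}$ equals $s_j-1$ or $s_j-2$ (so $s_j-1$ is either absent from $\alpha$ or equals $M_{j-1}$), and that the entries of each block appear in weakly increasing order in any UPF. For each eligible block $\pi_j$ (with $j\ge2$ and $|\pi_j|\ge2$) its ``second entry'' is the later-appearing of the two copies of $s_j$. I would introduce the set $Z_j$ consisting of both copies of $s_j$ together with all entries of $\pi_{j-1}$, and define $\chi_j(\beta)=1$ if the globally last element of $Z_j$ in $\beta$ is a copy of $s_j$, and $\chi_j(\beta)=0$ otherwise. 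Unwinding the definition of $\RRR$ (and using \eqref{R-union-S}) shows that $i\in\RRR(\beta)$ exactly when $\chi_j(\beta)=1$ for the block containing position $i$, so $|\RRR(\beta)|=\sum_j\chi_j(\beta)$ over eligible $j$. It therefore suffices to prove $\chi_j(\alpha)=\chi_j(F(\alpha))$ for every such $j$.

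The crux is a bridge from the order in $\alpha$ to the order in $F(\alpha)$. I would first record the following consequence of \Cref{lem: Foata-adjacent-invariance}: if $x$ precedes $y$ in $\alpha$, say with $y$ at position $k'$, and no value in the interval between $a_{j'}$ and $a_{k'}$ (in the sense of that lemma) occurs in $\alpha$ after position $k'$, then $x$ still precedes $y$ in $F(\alpha)$. This holds because $F(a_1\cdots a_{k'})$ appends $a_{k'}=y$ last, so its order on the pair $\{x,y\}$ agrees with $\alpha$, and part~(2) of the lemma, applied with $i=k'$, propagates this order to $F(a_1\cdots a_n)=F(\alpha)$. Now fix an eligible $j$, let $p^\ast$ be the maximal position occupied by $Z_j$ in $\alpha$, and apply this consequence to the pair consisting of the entry at $p^\ast$ and any other element $z\in Z_j$. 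The relevant value-interval is contained in $\{s_{j-1},\dots,s_j-1\}$, and the only values of this range actually present in $\alpha$ lie in $\pi_{j-1}\subseteq Z_j$ (values strictly between $M_{j-1}$ and $s_j$ are absent, and intermediate values belong to no other block, since blocks occupy disjoint value-intervals). Every such value sits at a position $\le p^\ast$, so the hypothesis holds automatically. Thus the entry at $p^\ast$ precedes nothing of $Z_j$ that it followed, i.e. it remains the last element of $Z_j$ in $F(\alpha)$; since its value (a copy of $s_j$ versus an entry of $\pi_{j-1}$) is intrinsic, $\chi_j(F(\alpha))=\chi_j(\alpha)$, and summing over $j$ finishes the argument.

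The main obstacle is precisely this bridge. \Cref{lem: Foata-adjacent-invariance} controls the Foata transform only through comparisons of prefixes, never comparing $\alpha$ directly to $F(\alpha)$; the key observation is that the maximal position $p^\ast$ of $Z_j$ furnishes a prefix whose Foata image already matches $\alpha$ on all the pairs we care about, after which the lemma's value-interval hypothesis is satisfied ``for free'' because every obstructing value lives inside $\pi_{j-1}$ and hence before $p^\ast$. A secondary subtlety is that the two copies of $s_j$ are indistinguishable and may be permuted by $F$, which is exactly why the argument is phrased through the set $Z_j$ and the indicator $\chi_j$ rather than by tracking named positions.
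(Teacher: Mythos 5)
Your proposal is correct and takes essentially the same approach as the paper: the $\SSS$ statement follows from content preservation and \eqref{S-formula}, and the $\RRR$ statement is reduced to order-preservation claims proved by applying part~(2) of \Cref{lem: Foata-adjacent-invariance} with $i$ equal to the later of the two positions, so that $F(a_1\cdots a_i)$ (which ends in $a_i$) orders the pair exactly as $\alpha$ does, with block structure supplying the value-interval hypothesis. The paper's bookkeeping differs only cosmetically: it tracks the value set $\RRR'(\alpha)=\{a_i\suchthat i\in\RRR(\alpha)\}$ via the pairs (second entry of $\pi_s$, first entry of $\pi_s$) and (second entry of $\pi_s$, last entry of $\pi_{s-1}$), whereas your $Z_j$/$\chi_j$ formulation compares the last element of $Z_j$ against every other element of $Z_j$, which if anything handles the repeated-value subtlety (e.g.\ when $|\pi_{j-1}|=2$) a bit more explicitly.
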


\begin{proof}
Since $F$ preserves content, it preserves block structure, hence $|\SSS(\alpha)|=|\SSS(F(\alpha))|$ by~\eqref{S-formula}.

For $\alpha=(a_1,a_2,\dots,a_n) \in \UPF_n$, let $\PPP(\alpha)$ be the set of positions of the second entries of blocks.  Equivalently, by block structure,
\[\PPP(\alpha)=\{i\in[n]\suchthat a_j=a_i\text{ for some } j<i\}.\]
Note that $\RRR(\alpha)\subseteq \PPP(\alpha)$.  Moreover, $|\PPP(\alpha)|=|\PPP(F(\alpha))|$, because $F$ preserves content.
For convenience, define $\RRR'(\alpha)=\{a_i\suchthat i\in\RRR(\alpha)\}$, so that $|\RRR'(\alpha)|=|\RRR(\alpha)|$.

We claim that $\RRR'(\alpha)=\RRR'(F(\alpha))$.  Indeed, let $k\in\PPP(\alpha)$, and let $\pi_s$ be the block containing $a_k$.  Taking $a_j$ to be either the first entry of $\pi_s$ or the last entry of $\pi_{s-1}$, and taking $i=\max(j,k)$, block structure implies that the elements $a_j,a_k$ satisfy the conditions of \cref{lem: Foata-adjacent-invariance}, hence appear in the same order in $F(\alpha)$ as they do in $F(a_1 a_2\cdots a_i)$, hence the same order as in $a_1\cdots a_i$ (due to our choice of~$i$).  In particular, $a_k\in\RRR'(\alpha)$ if and only if $a_k\in\RRR'(F(\alpha))$.  Therefore,
\[|\RRR(\alpha)| = |\RRR'(\alpha)| = |\RRR'(F(\alpha))| = |\RRR(F(\alpha))|.\qedhere\]
\end{proof}

\begin{example}
Let $\alpha=34411\in\UPF_5$, with block structure $11\mid3\mid44$.  Then $F(\alpha)=13441$.  Note that
\begin{align*}
\PPP(\alpha)    &= \{3,5\}, & \RRR(\alpha)    &= \{3\},\\
\PPP(F(\alpha)) &= \{4,5\}, & \RRR(F(\alpha)) &= \{4\},
\end{align*}
and $\RRR'(\alpha)=\RRR'(F(\alpha))=\{4\}$.
\end{example}

Applying \Cref{lem:Foata-preserves-R-and-S} and \Cref{thm:main-Foata} to  \Cref{thm:disp-enumerator-for-2-interval}, we obtain a major-index enumeration for $2$-interval parking functions.
\begin{corollary}\label{cor:maj-version-2}
For all $n\geq 1$,
\[
\sum_{\alpha\in \IPF_n(2)}q^{\disp(\alpha)}t^{\maj(\alpha)}=\sum_{\beta\in \UPF_n} q^{\disp(\beta)}(1+q)^{|\SSS(\beta)|}(1+qt)^{|\RRR(\beta)|}t^{\maj(\beta)}.
\]
\end{corollary}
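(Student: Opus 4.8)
The plan is to obtain the claimed identity purely by reindexing \emph{both} sides of the identity in \Cref{thm:disp-enumerator-for-2-interval} through the Foata transform $F$; no new combinatorial construction is needed, since every property of $F$ that I require has already been established. The four facts I will invoke are: (a) $F$ restricts to a bijection $\IPF_n(2)\to\IPF_n(2)$ and to a bijection $\UPF_n\to\UPF_n$ (\Cref{thm:main-Foata}, the cases $\ell=2$ and $\ell=1$); (b) $F$ preserves content and hence total displacement, so $\disp(F(w))=\disp(w)$; (c) Foata's theorem gives $\inv(F(w))=\maj(w)$ for every word $w$; and (d) $|\RRR(F(\beta))|=|\RRR(\beta)|$ and $|\SSS(F(\beta))|=|\SSS(\beta)|$ for every $\beta\in\UPF_n$ (\Cref{lem:Foata-preserves-R-and-S}).

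First I would handle the left-hand side. In the sum $\sum_{\alpha\in\IPF_n(2)}q^{\disp(\alpha)}t^{\inv(\alpha)}$ appearing in \Cref{thm:disp-enumerator-for-2-interval}, I substitute $\alpha=F(\alpha')$. By (a) the new index $\alpha'$ ranges over all of $\IPF_n(2)$; by (b) we have $\disp(\alpha)=\disp(\alpha')$; and by (c) we have $\inv(\alpha)=\maj(\alpha')$. Hence this sum equals $\sum_{\alpha'\in\IPF_n(2)}q^{\disp(\alpha')}t^{\maj(\alpha')}$, which is exactly the left-hand side of the desired identity. Next I would apply the same reindexing on the right. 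In the sum $\sum_{\beta\in\UPF_n}q^{\disp(\beta)}t^{\inv(\beta)}(1+q)^{|\SSS(\beta)|}(1+qt)^{|\RRR(\beta)|}$, I substitute $\beta=F(\beta')$; by (a) the index $\beta'$ ranges over $\UPF_n$, and by (b), (c), (d) every factor transforms correctly: $\disp$, $|\RRR|$, and $|\SSS|$ are each unchanged, while $\inv(\beta)=\maj(\beta')$. The right-hand side therefore becomes $\sum_{\beta'\in\UPF_n}q^{\disp(\beta')}t^{\maj(\beta')}(1+q)^{|\SSS(\beta')|}(1+qt)^{|\RRR(\beta')|}$. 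Chaining the two reindexed expressions through the equality of \Cref{thm:disp-enumerator-for-2-interval} yields the corollary.

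There is no serious obstacle here: all of the genuine content lives in \Cref{thm:main-Foata} (especially the delicate $\ell=2$ case) and in \Cref{lem:Foata-preserves-R-and-S}, and this corollary is only an assembly of those results. The one point requiring care is the bookkeeping around the direction of Foata's identity $\inv(F(w))=\maj(w)$, so that the $\inv$-statistics on both sides of \Cref{thm:disp-enumerator-for-2-interval} are converted \emph{into} $\maj$-statistics rather than the reverse; once the substitutions $\alpha=F(\alpha')$ and $\beta=F(\beta')$ are set up consistently on the two sides, the claimed identity drops out immediately.
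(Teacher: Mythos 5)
Your proposal is correct and is essentially identical to the paper's own proof: the paper likewise obtains \Cref{cor:maj-version-2} by applying \Cref{thm:main-Foata} (the $\ell=2$ and $\ell=1$ cases) and \Cref{lem:Foata-preserves-R-and-S} to reindex both sides of \Cref{thm:disp-enumerator-for-2-interval} through the Foata transform, using content-preservation for $\disp$ and $\inv(F(w))=\maj(w)$ for the statistic conversion. Your explicit attention to the direction of the substitutions $\alpha=F(\alpha')$, $\beta=F(\beta')$ is exactly the right bookkeeping.
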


It would be interesting to provide direct proofs of Corollaries~\ref{cor: maj-versions-n-1-1} and~\ref{cor:maj-version-2} that do not rely on the Foata bijection.

\section{Further Problems}\label{sec:future-work}

We provide some open problems for further study.
\begin{problem}
Describe a block structure for $\ell$-interval parking functions that generalizes the $\ell=1$ case (\cref{thm:upf_rearrangement}).
\end{problem} 

As we noted in \Cref{sec:block structure}, 
the block structure of unit interval parking functions appears to be unique to the case $\ell=1$.  For $\ell>1$, the problem seems to be much harder.

\begin{problem}
What can be said about the effect of the Foata transform on the \textit{outcome} of a parking function?
\end{problem}

One obvious thing is that if $\alpha\in \S_n$, then $\spot_{F(\alpha)}=F(\spot_\alpha)$ (because $\alpha=\spot_\alpha$). However, this does not hold in general. For instance, let $\alpha=121$. Then $F(\alpha)=211$, but $\spot_{F(\alpha)}=213$ whereas $F(\spot_\alpha)=F(123)=123.$ For unit interval parking functions (which the previous $\alpha$ is not), it seems that $F$ acts on the spots permutations as it did for permutations:
\begin{conjecture} \label{conj:1}
    For any unit interval parking function $\alpha$, $\spot_{F(\alpha)}=F(\spot_\alpha)$.
\end{conjecture}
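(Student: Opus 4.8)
The plan is to reduce \Cref{conj:1} to the statement that the Foata transform commutes with standardization. For a word $w=w_1\cdots w_n$, write $\std(w)\in\Sym_n$ for its standardization, the permutation obtained by relabelling the entries $1,\dots,n$ and breaking ties from left to right. The first observation is that $\spot_\beta=\std(\beta)$ for every $\beta\in\UPF_n$: by \Cref{lem: inv of outcome} we have $\Inv(\beta)=\Inv(\spot_\beta)$, while standardization visibly preserves the inversion set, so $\Inv(\std(\beta))=\Inv(\beta)$; since a permutation is determined by its inversion set and both $\spot_\beta,\std(\beta)\in\Sym_n$, they coincide. Given $\alpha\in\UPF_n$, \Cref{Foata:ell-is-one} guarantees $F(\alpha)\in\UPF_n$, so $\spot_{F(\alpha)}=\std(F(\alpha))$ and $\spot_\alpha=\std(\alpha)$. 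Thus the desired identity $\spot_{F(\alpha)}=F(\spot_\alpha)$ becomes $\std(F(\alpha))=F(\std(\alpha))$, and it suffices to prove $\std\circ F=F\circ\std$ on all words.

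To attack this, I would regard $F$ as a rearrangement of positions: for each word $w$ there is a permutation $\rho_w\in\Sym_n$ with $F(w)=(w_{\rho_w(1)},\dots,w_{\rho_w(n)})$, obtained by tracking each letter through the algorithm of \Cref{defn:foata}. The proof then rests on two invariance claims. \textbf{(B)} The map $F$ never reverses the relative order of equal letters; equivalently, if $w_a=w_b$ with $a<b$ then $\rho_w^{-1}(a)<\rho_w^{-1}(b)$. \textbf{(C)} The permutation $\rho_w$ depends only on $\std(w)$; in particular $\rho_w=\rho_{\std(w)}$.

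Granting (B) and (C), the identity follows: from (C), $F(\std(w))=(\std(w)_{\rho_w(1)},\dots,\std(w)_{\rho_w(n)})=\std(w)\circ\rho_w$, while $F(w)=w\circ\rho_w$. By (B) the permutation $\rho_w$ preserves the left-to-right order within each class of equal entries of $w$, and for such a permutation standardizing before or after reindexing gives the same result, i.e.\ $\std(w\circ\rho_w)=\std(w)\circ\rho_w$. Hence $\std(F(w))=\std(w\circ\rho_w)=\std(w)\circ\rho_w=F(\std(w))$. Applying this to a UPF $\alpha$ and chaining with the reduction above yields $\spot_{F(\alpha)}=\std(F(\alpha))=F(\std(\alpha))=F(\spot_\alpha)$, proving the conjecture.

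Both (B) and (C) would be proved by induction on $n$ following the recursive construction of $F$, and this is where the real work lies. For (B), the key observation is that in each branch of \Cref{defn:foata} the entry of a segment that gets cycled to the front is strictly smaller (in the case $w_n\ge w'_{n-1}$) or strictly larger (in the case $w_n<w'_{n-1}$) than every other entry of that segment, so the cycling step never moves a letter across an equal-valued letter. For (C), the point is that the algorithm's separator placement is governed solely by whether $w_n\ge w'_i$ or $w_n<w'_i$; because $w_n$ is the rightmost occurrence of its value in $w_1\cdots w_n$, standardization sends this weak comparison to the corresponding strict comparison between ranks, so the separators fall in exactly the same places for $w$ and for $\std(w)$. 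The main obstacle is the bookkeeping in the induction for (C): standardization does not commute with taking prefixes, so one must prove (C) in the stronger form that $\rho_u=\rho_v$ whenever $\std(u)=\std(v)$, and check that the length-$(n-1)$ prefixes of $w$ and of $\std(w)$ again have equal standardizations before invoking the inductive hypothesis. Once this invariance is set up, the comparison and cycling analyses are routine.
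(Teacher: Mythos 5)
You should know at the outset that the paper does not prove this statement: it is precisely \Cref{conj:1}, which the authors leave open and report having verified only computationally for unit interval parking functions of length at most $8$. So there is no proof of theirs to compare against; the question is whether your argument stands on its own, and as far as I can check it does --- written out in full, it would settle the conjecture.

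Your reduction is correct. Standardization preserves inversion sets, so \Cref{lem: inv of outcome} together with the fact that a permutation is determined by its inversion set gives $\spot_\beta=\std(\beta)$ for all $\beta\in\UPF_n$ (this genuinely needs the UPF hypothesis: $\spot_{121}=123$ while $\std(121)=132$), and \Cref{Foata:ell-is-one} puts $F(\alpha)$ back in $\UPF_n$, so the conjecture is indeed equivalent to $\std\circ F=F\circ\std$ on words. Both of your key claims are true, and your sketches isolate the correct mechanisms. For (B): in either branch of \Cref{defn:foata}, a segment consists of letters lying strictly on one side of the appended letter $w_n$ followed by a single letter on the other (weak) side, so the cycled letter never crosses a letter of equal value. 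For (C): the branch choice and the separator placement depend only on the comparisons $w_i'\leq w_n$ and $w_i'>w_n$, and because $w_n$ is the rightmost letter of its value, standardization converts these to the strict comparisons $\std(w)_i<\std(w)_n$ and $\std(w)_i>\std(w)_n$, which are exactly the branch conditions for the repetition-free word $\std(w)$. The bookkeeping issue you flag is resolved by exactly the fix you propose: $\std(u)=\std(v)$ if and only if the weak comparisons $u_i\leq u_j$ ($i<j$) agree with those of $v$ for all pairs, a condition that visibly restricts to prefixes, so the inductive hypothesis in the stronger form of (C) applies. Two details to attend to in a full write-up: define $\rho_w$ explicitly as the position-tracking permutation of the algorithm (when letters repeat, a rearrangement witness is not unique, so (C) is meaningful only for this particular choice), and include the short tie-breaking computation showing $\std(w\circ\rho)=\std(w)\circ\rho$ for any $\rho$ satisfying the order-preservation in (B). With those in place I see no gap; for instance $w=1221$ gives $F(w)=2121$ and $\std(F(w))=3142=F(1342)=F(\std(w))$, as your argument predicts.
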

We have verified \cref{conj:1} for all unit interval parking functions of length at most $8$.

\begin{problem}
    Count $\ell$-interval parking functions in terms of $(\ell\pm 1)$-interval parking functions.
\end{problem}
Theorems \ref{thm:disp-inv-enumerator-for-upfs} and \ref{thm:disp-enumerator-for-2-interval} as well as \Cref{ell-minus-two} are solutions to the above problem for $\ell=1,2,n-2$, respectively. Let $\IPF_{n}(\ell)^{\max}$ be the set of parking functions $\alpha$ of length $n$ such that $\maxdisp(\alpha)=\ell$. It follows from our enumerative results that for all $n\geq 1$
\[n!=|\IPF_n(0)^{\max}|\leq|\IPF_n(1)^{\max}|\leq |\IPF_n(2)^{\max}|\quad \text{and}\quad |\IPF_n(n-2)^{\max}|\geq |\IPF_n(n-1)^{\max}|=n^{n-2} \]
Experimentally, it seems this pattern continues. See \Cref{tab:maxdisp}.
\begin{conjecture}
    For each $n\geq 1$, the sequence $(|\IPF_{n}(\ell)^{\max}|)_{\ell=0}^{n-1}$ is unimodal i.e. if $a_\ell=|\IPF_{n}(\ell)^{\max}|$, there exists an integer $c\in \{0,1,\dots,n-1\}$ such that
    \[n!=a_0\leq a_1\leq a_2 \leq \cdots\leq a_c \geq\cdots  \geq  a_{n-2}\geq a_{n-1}=n^{n-2}. \]
\end{conjecture}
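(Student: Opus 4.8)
The plan is to reduce the conjecture to a statement about a single explicit family of coefficient sequences and then attack it by log-concavity rather than by real-rootedness (which fails). First I would pass from the cumulative counts to the level quantities. Writing $b_\ell=|\IPF_n(\ell)|$ for the number of parking functions of maximum displacement \emph{at most} $\ell$, we have $a_0=b_0=n!$ and $a_\ell=b_\ell-b_{\ell-1}$ for $\ell\ge 1$, so unimodality of $(a_\ell)$ is exactly the assertion that the second difference $a_{\ell+1}-a_\ell=b_{\ell+1}-2b_\ell+b_{\ell-1}$ changes sign once, from positive to negative. The extreme values $a_0=n!$ and $a_{n-1}=n^{n-2}$ from \cref{ex:basic counts} anchor the two ends.

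Next I would make the counting explicit via the ``counting through permutations'' framework. Refining \eqref{fibersizes} at $\ell=n-1$, every parking function $\alpha$ corresponds bijectively to a pair $(\sigma,(d_i)_{i=1}^n)$, where $\sigma=\car_\alpha$ and $d_i=\disp_\alpha(\sigma_i)=i-a_{\sigma_i}$ is the displacement of the car parking in spot $i$, subject only to $0\le d_i\le c_i(\sigma)-1$, where $c_i(\sigma)=i-t+1$ is the run length appearing in \eqref{define-Ll}; crucially, $\maxdisp(\alpha)=\max_i d_i$. Hence, setting $P_\sigma(\ell)=\prod_{i=1}^n\min(\ell+1,c_i(\sigma))$ so that $b_\ell=\sum_\sigma P_\sigma(\ell)$ (recovering \eqref{New formula for ell interval pfs}), we obtain
\[a_\ell=\sum_{\sigma\in\S_n}e_\sigma(\ell),\qquad e_\sigma(\ell)=\prod_{i=1}^n\min(\ell+1,c_i(\sigma))-\prod_{i=1}^n\min(\ell,c_i(\sigma)).\]
Because each factor $\ell\mapsto\min(\ell+1,c)$ is a log-concave sequence, each $P_\sigma$ is log-concave in $\ell$; this is the structural input I would try to leverage.

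My first line of attack would be to prove the stronger statement that $(a_\ell)$ is log-concave, $a_\ell^2\ge a_{\ell-1}a_{\ell+1}$, which forces unimodality since every $a_\ell>0$. I would \emph{not} pursue real-rootedness of $A_n(x)=\sum_\ell a_\ell x^\ell$: already $A_3(x)=6+7x+3x^2$ has negative discriminant, so no root-based argument can work. A second, more combinatorial line would be to build sign-definite injections for the two monotone halves---an injection $\IPF_n(\ell)^{\max}\hookrightarrow\IPF_n(\ell+1)^{\max}$ below the peak and its reverse above---realized on configurations $(\sigma,(d_i))$ by promoting a canonical coordinate (for instance, the leftmost $i$ with $c_i(\sigma)\ge\ell+2$) to the value $\ell+1$.

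The hard part will be that both the summation over $\sigma$ and the differencing that produces $e_\sigma$ from $P_\sigma$ destroy log-concavity: a sum of log-concave sequences need not be log-concave, and the peak of $e_\sigma(\ell)$ genuinely depends on the run-length multiset of $\sigma$, so there is no single peak to which the injections can be aligned. The most promising way around this, which I would develop next, is to regroup the sum by the multiset $\{c_i(\sigma)\}$---equivalently by the profile $r_k(\sigma)=\#\{i:c_i(\sigma)>k\}$, noting that $r_1(\sigma)=\asc(\sigma)$ recovers $b_1=\Fub_n$ in agreement with \cref{thm:disp-inv-enumerator-for-upfs}---and to find a product formula or recursion for the number of $\sigma\in\S_n$ realizing each profile. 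This would replace the opaque sum over $\S_n$ by an explicit weighted sum of the transparent polynomials $e_\sigma$, at which point either a direct log-concavity estimate or a global injection has a realistic chance of succeeding.
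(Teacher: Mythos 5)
This statement is left \emph{open} in the paper: it appears as a conjecture, supported only by the data in Table~\ref{tab:maxdisp}, so there is no proof of record to compare yours against. The question is therefore whether your proposal itself constitutes a proof, and it does not. Your reductions are correct as far as they go: $a_\ell=b_\ell-b_{\ell-1}$ with $b_\ell=\sum_{\sigma\in\S_n}P_\sigma(\ell)$ and $P_\sigma(\ell)=\prod_{i}\min(\ell+1,c_i(\sigma))$ is exactly the refinement of \eqref{fibersizes} and \eqref{New formula for ell interval pfs}, and each $P_\sigma$ is indeed log-concave in $\ell$, being a pointwise product of positive log-concave sequences. But everything after that is a plan, not an argument. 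You yourself identify the two fatal obstacles---differencing $P_\sigma$ to obtain $e_\sigma$, and summing over $\sigma$, both destroy log-concavity---and then defer their resolution (``which I would develop next,'' ``has a realistic chance of succeeding''). No log-concavity inequality for $(a_\ell)$ is established, and no injection is actually constructed, so the conjecture is exactly as open after your proposal as before it.

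Two concrete gaps in the fallback ideas are worth naming. First, the proposed injection $\IPF_n(\ell)^{\max}\hookrightarrow\IPF_n(\ell+1)^{\max}$, realized by promoting the coordinate $d_i$ at the leftmost $i$ with $c_i(\sigma)\geq\ell+2$ to the value $\ell+1$, overwrites the original value of $d_i$; the preimage cannot be recovered, so the map is not injective as stated. Moreover such an $i$ need not exist for every $\sigma$ contributing to $a_\ell$ (a permutation can have maximum run length exactly $\ell+1$), so the map is not even everywhere defined, and below the peak one would still need to prove that the defect can be absorbed. Second, the regrouping by the profile $r_k(\sigma)=|\{i \suchthat c_i(\sigma)>k\}|$ is only a proposed change of variables: you give no product formula or recursion for the number of permutations realizing a given profile, and it is not evident that a usable one exists, since the full run-length multiset is a much finer statistic than the ascent number $r_1(\sigma)=\asc(\sigma)$ that makes the $\ell=1$ case tractable. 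In short, the structural setup is sound and consistent with the paper's framework in \Cref{sec:counting_through_perms}, but the proposal is a research program whose decisive steps are missing, not a proof.
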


\begin{table}[ht]
    \centering
    \begin{tabular}{|c||l|l|l|l|l|l|l|l|l|}\hline
         \diagbox{$n$}{$\ell$}&0&1&2&3&4&5&6 &7&8\\\hline\hline
         1&1&&&&&&&&\\\hline
2&2 &1&&&&&&&\\\hline
3&6 &7 &3&&&&&&\\\hline
4&24 &51 &34 &16&&&&&\\\hline
5&120& 421& 377& 253& 125&&&&\\\hline
6&720 &3963& 4594& 3688& 2546& 1296&&&\\\hline
7&5040& 42253& 62145& 57398& 46142 &32359&16807&&\\\hline
8&40320& 505515& 929856& 979430& 865970& 702292 &497442& 262144&\\\hline
9&362880& 6724381& 15298809& 18289811 &17520519& 15455851& 12587507& 8977273& 4782969\\\hline
    \end{tabular}
    \caption{$|\IPF_{n}(\ell)^{\max}|$ up to $n=9$. OEIS sequence numbers forthcoming.}
    \label{tab:maxdisp}
\end{table}

\section*{Acknowledgments}
The genesis for this project was the Graduate Research Workshop in Combinatorics 2024, hosted by University of Wisconsin, Milwaukee, which was supported in part by NSF Grant DMS~--~1953445. 
We thank the developers of 
SageMath~\cite{sage}, which was useful in this research, and the CoCalc~\cite{SMC} collaboration platform.
We also thank Steve Butler, Ari Cruz, Kim Harry,  Matt McClinton, Keith Sullivan, and Mei Yin for their comments and guidance at the start of this project.
This work was supported
by a grant from the Simons Foundation (Travel Support for Mathematicians, P. E. Harris).

\bibliographystyle{plain}
\bibliography{Bibliography.bib}
\end{document}